    \newcommand{\Add}[1]{\textcolor{black}{#1}}
    \newcommand{\mumaxz}{\mu_{\max}^{\rm{z}}}
    \newcommand{\mumaxnz}{\mu_{\max}^{\rm{nz}}}
    \newcommand{\grad}{{\mathrm{grad }}}
    \newcommand{\gradf}{{\mathrm{grad } f}}
    \newcommand{\dist}{{\mathrm{dist}}}
    \newcommand{\injR}{{\mathrm{Inj}^R}}
    \newcommand{\Dist}{{\mathrm{Dist}}}
    \newcommand{\diffe}{{\mathrm{d}}}
    \newcommand{\bxk}{\overline{x_k}}
    \newcommand{\TM}{T \mathcal{M}}
    \newcommand{\mani}{\mathcal{M}} 
        \newcommand{\tred}{\textcolor{black}}
    \newcommand{\tblue}{\textcolor{black}}
\begin{document}
    
    \title{Riemannian Levenberg-Marquardt Method with Global and Local Convergence Properties
    }
    
    \titlerunning{Riemannian LM Method with Global and Local Convergence Properties}        
    
    \author{Sho Adachi        \and
      Takayuki Okuno \and
      Akiko Takeda
    }
    
    
    \institute{S. Adachi  \at Graduate School of Information Science and Technology, University of Tokyo, Tokyo, Japan \\ \email{adachi-syo410@g.ecc.u-tokyo.ac.jp}           
               \and
               T. Okuno \at Faculty of Science and Technology,
Seikei University, Tokyo, Japan;  Center for Advanced Intelligence Project, RIKEN, Tokyo, Japan 
      \\ \email{takayuki-okuno@st.seikei.ac.jp}
      \and 
      A. Takeda \at 
Graduate School of Information Science and Technology, University of
Tokyo, Tokyo, Japan;
Center for Advanced Intelligence Project, RIKEN, Tokyo, Japan
     \\ \email{takeda@mist.i.u-tokyo.ac.jp} }   
    \date{Received: date / Accepted: date}

    \maketitle

    \begin{abstract}
      We extend the Levenberg-Marquardt method on Euclidean spaces to Riemannian manifolds. 
      Although a Riemannian Levenberg–Marquardt (RLM) method was proposed by Peeters in 1993, to the best of our knowledge, there has been no analysis of theoretical guarantees for global and local convergence properties.
      As with the Euclidean LM method, how to update a specific parameter known as the ``damping parameter'' has significant effects on its performances. We propose a trust-region-like approach for determining the parameter. 
       We evaluate the worst-case iteration complexity to reach an $ \epsilon$-stationary point, and also prove that it has desirable local convergence properties under the local error-bound condition. Finally, we demonstrate the efficiency of our proposed algorithm by numerical experiments.
    \keywords{Riemannian manifolds \and Riemannian optimization \and Least squares problem \and Levenberg-Marquardt method}
    \end{abstract}

\section{Introduction} \label{intro}
    
Optimization problems over Riemannian manifolds, manifolds equipped with smoothly varying positive definite symmetric metrics at every point, have been studied intensively. On Riemannian manifolds, we can construct counterparts of a variety of basic concepts in Euclidean optimization such as gradient and Hessian. With these extended concepts, classical unconstrained optimization methods on Euclidean spaces such as the steepest descent method and the Newton method have been generalized to Riemannian manifolds \cite{RSDRN}.
For example, there exist Riemannian quasi-Newton methods 
\cite{Riemanniantrustregion2,Riemannianquasi-Newton1,Riemannianquasi-Newton2,RiemannianCG1}, Riemannian conjugate gradient methods
\cite{RSDRN,RiemannianCG1,RiemannianCG2,RiemannianCG3}, Riemannian trust region methods \cite{Riemanniantrustregion1,RSDRN,RTRconvergencerate,Riemanniantrustregion2,
Riemanniantrustregion3,baker2008implicit,boumal2015riemannian,li2021nonmonotone
}, and so forth.
Moreover, in the last few years, studies on constrained optimization methods on Riemannian manifolds have been advanced remarkably, too. For instance, 
Riemannian augmented Lagrangian methods \cite{cliu,yamakawa},
Riemannian SQP methods \cite{schiela2021sqp,obara}, and Riemannian interior point methods \cite{lai2022superlinear} have been proposed together with rigorous convergence analysis.

In this paper, we consider the nonlinear least square problems over an $n$-dimensional connected Riemannian manifold $ (\mani,\langle \cdot, \cdot \rangle)$ , i.e.,  
\begin{align}
  \begin{split}
  \label{P}
  \underset{x\in\mani}{\text{minimize}} \quad & f(x) := \frac{1}{2}\| F(x) \|^2 (= \frac{1}{2}\sum_{i=1}^{m} F_i(x)^2), 
  \end{split}
\end{align}
where $F_i \colon \mani \rightarrow \mathbb{R} $ ($i=1,\dots,m$) are continuously differentiable functions and $ F: \mani \rightarrow \mathbb{R}^m $ is defined as  $ F = \left( F_1,F_2,\dots,F_m\right)^T $. 
The problem \eqref{P} in Euclidean spaces, namely, when $\mani=\mathbb{R}^n$, has many classic, but important applications
ranging from inverse problems \cite{inverseproblem}, regressions \cite{regression}, to systems of nonlinear equations \cite{nonlineareq}.
In addition, some recent applications such as the CP decomposition of tensors \cite{TAP}, the low-rank matrix completion \cite{low-rankmatrixcompletion}, the Fr\'{e}chet mean \cite{originoffrechetmean}, or the geodesic regression \cite{geodesicregressionproposed1,geodesicregressionproposed2} are formulated as \eqref{P} with some more general Riemannian manifold $\mani $.

In Euclidean spaces, 
optimization methods for nonlinear least-square problems have been studied extensively. Especially, the Gauss-Newton (GN) method and the Levenberg-Marquardt (LM) method 
are the most popular methods specialized in solving this type of problem.
Their common strength is that the fast local convergence can be attained without computing the Hessian of $F$ which is often costly when $n$ is large.  Indeed, under some assumptions, both the methods admit local quadratic convergence for the zero-residual case which implies $\min_{x\in\mathbb{R}^n}\|F(x)\|^2=0$. 
The GN method \cite{Bertsekas} generates a search direction 
in each iteration by solving the linear equation equivalent to the natural approximation problem 
$\min_{d\in \mathbb{R}^n}\,\|F(x)+J(x)d\|^2$, where 
$J(x)$ denotes the Jacobian of $F$.  
However, it is often pointed out that the GN method fails to work upon confronting the ill-conditioned linear equations derived from the rank deficiency of $J(x)$. 
On the other hand, the LM method was developed to cope with this matter \cite{Levenberg,Marquardt} by solving the regularized linear equations equivalent to $\min_{d\in \mathbb{R}^n}\,\|F(x)+J(x)d\|^2+\frac{\rho}{2}\|d\|^2$, where $\rho$ is a positive parameter called \textit{damping parameter}. 
Various versions of LM methods have been proposed so far \cite{Behling,marumo2023majorization,Bergou,globalRLMtrust,Yamashita} and their theoretical and practical performances vary mainly depending on a manner of updating the damping parameter. 

In summary, the Euclidean LM method enjoys several nice properties.
It admits a local quadratic convergence for the zero-residual case under a local {\it error-bound} condition, which is weaker than the nonsingularity condition of the Jacobian $J$ at a solution \cite{Yamashita}. For the nonzero-residual case, the local linear convergence of the LM method was also shown under the local error-bound condition by \cite{locallinear}.  
Moreover, the global convergence complexity was 
studied, e.g. \cite{globalRLM,globalRLMtrust,Bergou}. 
    
Some researchers worked with GN and LM methods 
on Riemannian manifolds for solving \eqref{P}. For example, the basic Riemannian GN (RGN) method is described in the textbook, \cite[Section 8.4.1]{RSDRN}. In \cite{TAP}, it was customized for solving a certain tensor-decomposition problem.
Although the local convergence properties were established in \cite{RGN} under some assumptions on the Jacobian of $F$, it is not equipped with any global convergence property. 
The first Riemannian LM (RLM) method was considered in \cite{RLM0}, but any theoretical results were not presented therein.
Moreover, although \cite[Section 8.4.1]{RSDRN} pointed out that a combination of the trust region method and the RGN method can be regarded as the  RLM method, no specific algorithm is presented there. In this paper, we propose the first RLM method equipped with both global and local convergence guarantees by developing a specific trust-region-like manner of tuning the damping parameter.

\subsection{Our contribution}
Our contribution is summarized as follows:
\begin{enumerate}
\item {\bf Development of the RLM method}: 
Even though a Riemannian version of the LM method was considered in \cite{RLM0}, it only states how to find an LM-like search direction independently from local coordinates. We characterize the search direction as the tangent vector minimizing  \textit{subproblem} of \eqref{P} and thus, it is intrinsically independent of the choice of local coordinates.
Our RLM method is different from the one in \cite{RLM0} especially in the update manner for the damping parameter. 
\item {\bf Theoretical guarantees for the RLM method}:
  Our method has theoretical convergence guarantees: global iteration complexity and local convergence rates.
\begin{itemize}
\item 
  Our method is globally convergent and 
  the worst-case iteration complexity to reach an $\epsilon $-stationary point is $O\left( \log{(\epsilon^{-1})}\epsilon^{-3}\right)$
under standard assumptions such as $L$-smoothness, explained later in Section~\ref{global}. 
\item 
    The local convergence analysis evaluates the algorithm's behavior around a stationary point $x^*$, and
  the convergence rate differs depending on  whether the residual $f(x^*)$ is zero or not. The former case is often called the {\it zero-residual case}, while the latter is the {\it nonzero-residual case}.
We extend the local error bound condition, which is a standard assumption for Euclidean LM methods, to Riemannian manifolds. Under this condition, we prove that the proposed RLM has the quadratic local convergence for zero-residual cases and the linear one for nonzero-residual cases.
\end{itemize}
\end{enumerate}
 Unlike Euclidean setting,
the local convergence analysis is complicated because
  the search direction $s_k$ does not generally satisfy $ \| s_k \|_{x_k} = \dist (x_k,x_{k+1})$ except in special circumstances\footnote{More specifically, the search direction $s_k$ does not satisfy $ \| s_k \|_{x_k} = \dist (x_k,x_{k+1})$ unless $s_k = \log_{x_k}(x_{k+1}) $ holds.
  Here, $\dist : \mani \times \mani \rightarrow \mathbb{R}_{\geq 0} $ denotes the Riemannian distance 
  and $\log_{x_k} : \mani \rightarrow T_{x_k} \mani $ denotes the \textit{logarithmic map} at $x_k$}. This prevents us from applying the standard approach for the local convergence analysis of LM methods in Euclidean spaces to our RLM method. However, we settle this issue by introducing an inequality on the Riemannian distance and the norm of tangent vectors obtained by the inverse retraction.
%
%
Finally, let us
{make comparison with 
two Riemannian methods which are related to the RLM method,
the adaptively quadratically regularized Newton (ARN) method
\cite{quadraticallyreguralizedNewton} and 
the Riemannian trust region (RTR) method \cite{Riemanniantrustregion1,RSDRN,RTRconvergencerate,Riemanniantrustregion2,
Riemanniantrustregion3,baker2008implicit,boumal2015riemannian,li2021nonmonotone
}:
\begin{itemize}
\item The ARN method solves a sequence of  quadratic subproblems with proximal regularization and selects the regularization parameter adaptively.
This ARN is quite similar to the RLM in that the regularization technique is employed.
However, in the article\,\cite{quadraticallyreguralizedNewton}, 
the global complexity is not derived, and the assumptions for the local quadratic convergence of the ARN, which are set in \cite{quadraticallyreguralizedNewton}, include the nonsingularity of the Jacobian; the regularity assumption is stronger than ours, i.e., the local error bound. It may be worth mentioning that the ARN 
\cite{quadraticallyreguralizedNewton}
is limited to manifolds embedded to Eucledian spaces, and the regularization is induced from the squared Euclidean 2-norm. In contrast, our RLM is not the case, and the regularization is described with the norm induced from the Riemannian metric. As a result, search directions of the RLM are determined independently from local coordinates. 
\item The RTR solves a sequence of quadratic subproblems subject to  the so-called trust region on the tangent space of the Riemannian manifold. The trust region radius is tuned so that the quadratic subproblem is a good approximation to the original problem. 
As observed from the Karush-Kuhn-Tucker conditions for the quadratic subproblem, the trust-region scheme has an effect similar to the regularization technique.  However, as well as the ARN method, 
the regularity assumption 
that hessians are nondegenerate at a solution is set in \cite{Riemanniantrustregion1,
baker2008implicit,Riemanniantrustregion3,li2021nonmonotone}
for the local convergence. 
In particular, an attractive point of the RLM is that quadratic convergence is achieved without using Hessians. 
As regards complexity of the global convergence, 
the analysis \cite{RTRconvergencerate} of the RTR method is similar to ours in the present paper. Nevertheless, it is difficult to translate the lemmas and propositions established in \cite{RTRconvergencerate} as those for our RLM.
\end{itemize}
In the numerical experiments, we will make numerical comparison with the RTR and ARN methods, and show that the RLM performs very well.   
}

\subsection{Organization of the paper} 
  The rest of this paper is organized as follows:
In Section \ref{chap:algorithm}, we propose a new RLM method, and
in Section~\ref{global}, we show that the RLM method has a global convergence property and further evaluate the worst-case iteration complexity to reach an $\epsilon$-stationary point.
In Section~\ref{chap:local}, we analyze the local behavior of the proposed RLM method.
In Section~\ref{chap:experiment}, 
we conduct some numerical experiments of the RLM method, and
in Section~\ref{chap:conclusion}, we conclude this paper with some remarks.

{
  \subsection{Notations and terminologies}
For a given Riemannian manifold $(\mathcal{M},\langle \cdot, \cdot \rangle) $, $ T_x \mathcal{M}$ denotes the tangent space to $ \mathcal{M}$ at $x$ and $ \TM $ is the tangent bundle of $\mathcal{M}$, i.e. $ \TM := \coprod_{x \in \mathcal{M}} T_x \mathcal{M}$. Let $ 0_x$ be the zero vector of $ T_x \mathcal{M}$ as a vector space.
We denote by $ \langle \cdot , \cdot \rangle_x $ the inner product induced by the Riemannian metric at the point $x \in \mathcal{M}$ and
$ \| \cdot \|_x$ is the norm induced by the inner product, i.e., $ \|v \|_x := \sqrt{ \langle v, v \rangle_x}$ for $ v \in T_x \mathcal{M}$. 
For the sake of brevity, the subscript $x$ is dropped from $ \| \cdot \|_x$ when it is clear from the context.

Given manifolds $ \mani$ and $\mathcal{N}$ together with 
a smooth map $g: \mani \rightarrow \mathcal{N}$, 
$\diffe g$ denotes the differential of $g$, namely, for all $p \in \mani$, $ \diffe g(p)$ is the linear map from $ T_p \mani$ to $ T_{g(p)} \mathcal{N}$ such that 
$\diffe g(p) [v] = \left.\frac{\diffe (g \circ c)  }{\diffe t }\right|_{t=0} $ holds for all $ v \in T_{p} \mani $ 
where $ c : (-\epsilon,\epsilon) \rightarrow \mani$ ( $\epsilon > 0 $ ) is any smooth curve satisfying $c(0) = p $ and $\left. \frac{\diffe c(t) }{\diffe t}\right|_{t=0} = v$.   

We next introduce a {\it retraction}, playing an important role for Riemannian optimization algorithms to determine an iteration point on the manifold along a given tangential direction.
\begin{definition}\label{retraction defi}
  A retraction $R$ is a smooth map from $ \TM$ to $\mathcal{M} $ with the following properties. Let $R_x$ denote the restriction of $R$ to $x \in \mathcal{M}$.
  \begin{enumerate}
\item $R_x(0_x) = x$ 
for all $x \in \mathcal{M}$.\\
\item For all $x \in \mathcal{M} $, the differential map $ \diffe R_x(0_x) : T_x \mathcal{M} \rightarrow T_x \mathcal{M}$ is the identity map on $T_x \mathcal{M}$.
 \end{enumerate}
\end{definition}

Next, we define some notations and terminologies concerning the function $F$
in \eqref{P}.
We refer to the following linear map as the Jacobian matrix of $F$ at $x \in \mani$:
\begin{eqnarray*}
\begin{array}{rccc}
    J(x) \; \colon & T_{x} \mathcal{M}                  &\longrightarrow& \mathbb{R}^m                  \\
            & \rotatebox{90}{$\in$}&               & \rotatebox{90}{$\in$} \\
            & s                & \longmapsto   & \left( \begin{array}{c}
                \langle \text{grad}F_1(x),s \rangle_{x} \\
                \vdots \\
                \langle \text{grad}F_m(x),s \rangle_{x}
            \end{array}
            \right),
\end{array}
\end{eqnarray*}
where $\grad F_i (x)$ is the Riemannian gradient of $F_i$ at $x$ for $1 \leq i \leq m$. 
We denote the adjoint operator of $J(x)$ by $ J(x)^* : \mathbb{R}^m \rightarrow T_{x} \mani$, i.e.,
\begin{eqnarray}
        \langle J(x)^* u, v \rangle_{x} &=& \langle u, J(x) v \rangle \label{def of adj}
\end{eqnarray}
for all  $(u,v) \in \mathbb{R}^m\times T_{x} \mathcal{M}$, where the inner product in the right-hand side is the canonical inner product of $ \mathbb{R}^m$.
Finally,  we define the norm 
$ \| J(x) \|$ of the Jacobian matrix $J(x)$ for $x\in \mani$
by means of the operator norm, namely, 
\begin{eqnarray*}
    \| J(x) \| &:=& \max_{v \in T_x \mani \setminus \{0_x\}} \frac{\| J(x)v \|}{\|v\|_x},
\end{eqnarray*}
where the norm in the numerator of the right-hand side represents the Euclidean norm. Clearly, $ \|J(x) \|$ is equal to the square root of the maximum eigenvalue of $ J(x)^* J(x):T_x\mani\to T_x\mani$.
}


   \section{Proposed Riemannian Levenberg-Marquardt method} \label{chap:algorithm}
    
    In this section, we describe how the search direction of the RLM is determined in each iteration and present the specific pseudo-code for the RLM. In addition, we will characterize the search direction which plays important roles in theoretical analysis. Our formulation of the RLM can be considered as a natural generalization of the LM in Euclidean spaces. Let $\{x_k\}\subseteq \mathcal{M}$ denote a sequence generated by the proposed RLM, and let $J_k :=J(x_k)$.

    The pseudo-code of our proposed method is  Algorithm~\ref{alg1}. This trust-region-like updating scheme of the damping parameter is proposed by \cite{Bergou} in the Euclidean setting. Below, we show the details of the algorithm.

    \begin{algorithm}[H]
        \caption{RLM method }
        \label{alg1}
        \begin{algorithmic}[1]
        \renewcommand{\algorithmicrequire}{\textbf{Input:}}
        \renewcommand{\algorithmicensure}{\textbf{Output:}}
        \REQUIRE $x_0 \in \mathcal{M} , \eta \in (0,1), \mu_{\min}>0, \beta >1, \text{flag}^{\text{nz}} \in \{ \text{true},\text{false}\}$ \; \; 
        \ENSURE stationary point of \eqref{P}
        \STATE $\mu_0 \leftarrow \mu_{\min}$ , $\bar{\mu} \leftarrow \mu_0 $
       \STATE $k \leftarrow 0$
        \WHILE{not convergence}
        \STATE compute $F(x_k), J_k$
        \STATE compute $s_k$ by solving \eqref{linear eq of LM} with $\lambda_k = \mu_k \| F(x_k)\|^2$.
        \STATE compute $\rho_k := \frac{f(x_k)-f(R_{x_k}(s_k))}{\frac{1}{2}(\theta^k(0_{x_k})-\theta^k(s_k))} $
        \IF{$ \rho_k \geq \eta$}
        \STATE $ x_{k+1} \leftarrow R_{x_k}(s_k)$, $\bar{\mu} \leftarrow \mu_k$
        \IF{$\text{flag}^{\text{nz}}$}
        \STATE $ \mu_{k+1} \leftarrow \bar{\mu}$
        \ELSE
        \STATE $ \mu_{k+1} \leftarrow \max{\left(\mu_{\min},\label{line8} \frac{\bar{\mu}}{\beta} \right) }$
        \ENDIF
        \ELSE 
        \STATE $ x_{k+1} \leftarrow x_k \; , \; \mu_{k+1} \leftarrow \beta \mu_k$
        \ENDIF
        \STATE $ k \leftarrow k+1$
        \ENDWHILE
        \RETURN $x_k$
        \end{algorithmic}
    \end{algorithm}

    \subsection{Subproblem for problem~\eqref{P}}
    Given $ \lambda_k >0$, define $\theta^k : T_{x_k} \mani \rightarrow \mathbb{R} $ as 
    \begin{eqnarray}
        \theta^k(s) &:=& \| F(x_k)+J_k s \|^2 + \lambda_k \|s\|^2_{x_k} \label{original theta_k_s} \\
        &=&  \| F(x_k) \|^2 + 2 \langle \grad f(x_k), s \rangle_{x_k} + \| J_k s \|^2 + \lambda_k \| s \|_{x_k}^2, \label{theta_k_s}
    \end{eqnarray}
   where $ \lambda_k$ is called a damping parameter and we will explain how to update it later. The term 
   $F(x_k) + J_k s $ in \eqref{original theta_k_s} 
   corresponds to the linearization  of $F(R_{x_k}(s))$ at $x_k$, where $R$ is the retraction defined in
   Definition~\ref{retraction defi}.
   It is worth noting that 
   $\| F(x_k)+J_k s \|^2$ is equipped with the Euclidean norm, while 
   $\|s\|^2_{x_k}$ is measured by the norm induced by the Riemannian metric.
   
 We solve the following problem as the subproblem of ~\eqref{P} at $x_k$ 
    \begin{align}
        \begin{split}
        \label{sub problem}
    \underset{s\in T_{x_k} \mani}{\text{minimize}} \quad & \theta^k(s),
        \end{split}
    \end{align}
    and denote the optimal solution by $s_k$, namely
        \begin{eqnarray}
        s_k := \underset{s \in T_{x_k} \mani}{\text{ arg min }} \theta^k(s). \label{ s_k = argmin theta}
    \end{eqnarray}
        This problem is strongly convex on $T_{x_k} \mani$, and thus has a unique optimum. 
        We employ the solution $s_k$ as the search direction at $x_k$. 
Through the stationary condition of \eqref{sub problem}, 
$s_k$ is characterized as a solution of a certain linear equation 
as in the following proposition. This relationship is in fact non-trivial because of $\|\cdot\|_{x_k}$ in the function $\theta^k$.
{\begin{proposition} \label{charcterization of search direction}      
The tangent vector $s_k$ solves problem\,\eqref{sub problem} if and only if it satisfies
    \begin{eqnarray}\label{linear eq of LM}
        (J_k^* J_k + \lambda_k I_k ) s_k &=& - {J}^*_k F(x_k) \notag \\
        &=& - \grad f(x_k),
    \end{eqnarray}
    where $ I_k$ denotes the identity map on $ T_{x_k} \mani$. 
    In particular,   
    the equation \eqref{linear eq of LM} has a unique solution.  
    \end{proposition}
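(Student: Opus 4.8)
The plan is to exploit that the subproblem \eqref{sub problem} is an unconstrained minimization of a smooth, strongly convex quadratic on the finite-dimensional inner product space $(T_{x_k}\mani,\langle\cdot,\cdot\rangle_{x_k})$. For such a function, $s_k$ is the global minimizer if and only if the first-order stationarity condition $\diffe\theta^k(s_k)=0$ holds, and convexity makes this condition simultaneously necessary and sufficient. Hence the whole ``if and only if'' reduces to computing the differential of $\theta^k$ with respect to the Riemannian inner product at the fixed base point $x_k$ and showing that its vanishing is equivalent to \eqref{linear eq of LM}.

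First I would start from the expanded form \eqref{theta_k_s} and, for an arbitrary $v\in T_{x_k}\mani$, differentiate $t\mapsto\theta^k(s+tv)$ at $t=0$. The linear term $2\langle\grad f(x_k),s\rangle_{x_k}$ contributes $2\langle\grad f(x_k),v\rangle_{x_k}$; the term $\|J_k s\|^2=\langle J_k^*J_k s,s\rangle_{x_k}$ contributes $2\langle J_k^*J_k s,v\rangle_{x_k}$, where I use the defining relation \eqref{def of adj} of the adjoint to rewrite the $\mathbb{R}^m$-inner product $\langle J_k s,J_k v\rangle$ through $J_k^*$; and the regularizer $\lambda_k\|s\|_{x_k}^2=\lambda_k\langle s,s\rangle_{x_k}$ contributes $2\lambda_k\langle s,v\rangle_{x_k}$. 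Collecting the contributions gives $\diffe\theta^k(s)[v]=2\langle (J_k^*J_k+\lambda_k I_k)s+\grad f(x_k),\,v\rangle_{x_k}$. Since this must vanish for every $v$ and $\langle\cdot,\cdot\rangle_{x_k}$ is nondegenerate, stationarity at $s_k$ is equivalent to $(J_k^*J_k+\lambda_k I_k)s_k=-\grad f(x_k)$, which together with the identity $\grad f(x_k)=J_k^*F(x_k)$ is exactly \eqref{linear eq of LM}.

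For the final assertion, I would observe that $J_k^*J_k$ is self-adjoint and positive semidefinite on $T_{x_k}\mani$, so for $\lambda_k>0$ the operator $J_k^*J_k+\lambda_k I_k$ is self-adjoint and positive definite, hence invertible on the finite-dimensional space $T_{x_k}\mani$. Therefore \eqref{linear eq of LM} admits a unique solution, which by the equivalence just established is the unique minimizer $s_k$ of \eqref{sub problem}, consistent with the strong convexity already noted.

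I expect the only genuinely delicate point to be the correct differentiation of the two terms carrying different norms, exactly as flagged after the statement. The regularizer $\|s\|_{x_k}^2$ must be treated as a quadratic form in the \emph{fixed} inner product $\langle\cdot,\cdot\rangle_{x_k}$, the base point $x_k$ being frozen during the minimization, so that its gradient is simply $2s$ rather than an expression involving coordinate derivatives of the metric; and keeping the adjoint $J_k^*$ defined intrinsically via \eqref{def of adj} is what fuses the $\mathbb{R}^m$-norm term and the Riemannian-norm term into a single self-adjoint operator equation. This is precisely what makes the resulting characterization coordinate-free.
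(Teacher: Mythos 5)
Your proof is correct, and it takes a genuinely different route from the paper's. You work intrinsically on the inner product space $(T_{x_k}\mani,\langle\cdot,\cdot\rangle_{x_k})$: you compute the directional derivative of $\theta^k$ along an arbitrary $v$, use the defining relation \eqref{def of adj} of the adjoint to convert the $\mathbb{R}^m$-inner product $\langle J_k s, J_k v\rangle$ into $\langle J_k^* J_k s, v\rangle_{x_k}$, and invoke nondegeneracy of the metric plus convexity to get the equivalence. The paper instead passes to a local coordinate system, writes the metric matrix $G$ and the coordinate representation $A = G^{-1}\tilde{J}_k^T\tilde{J}_k + \lambda_k I$, verifies explicitly that $GA = \tilde{J}_k^T\tilde{J}_k + \lambda_k G$ is symmetric positive definite, derives the stationarity condition $G(A\tilde{s}+\tilde{v})=0$, and cancels $G$ before translating back to the coordinate-free equation. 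The subtlety the paper flags -- that $\theta^k$ mixes the Euclidean norm in $\|F(x_k)+J_k s\|^2$ with the Riemannian norm in $\lambda_k\|s\|_{x_k}^2$ -- is resolved in the paper by exhibiting the symmetric matrix $GA$ in coordinates, whereas in your argument it is absorbed into the intrinsic definition of $J_k^*$; both resolutions are legitimate, and your version is shorter and manifestly coordinate-free, while the paper's makes the role of the metric matrix $G$ (and why the coordinate expression of $J_k^*$ carries a factor $G^{-1}$) completely explicit. The uniqueness argument via positive definiteness of $J_k^*J_k+\lambda_k I_k$ is the same in both.
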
}
\begin{proof}
  Since the latter assertion is trivial as the linear operator $J_k^* J_k + \lambda_k I_k $ is positive definite, namely, 
 $\langle v, (J_k^* J_k + \lambda_k I_k ) v  \rangle_{x_k} > 0$ for all $ v \in T_{x_k} \mani\setminus\{0_{x_k}\}$, 
  we prove only the former one.

  Let $(U; x^1,\dots,x^n) $ be an arbitrary coordinate neighborhood containing $x_k $. Let $G$ be the matrix representation of the Riemannian metric at $x_k$ under this local coordinate, i.e., $  G = \left(g_{ij}\right)_{1 \leq i,j \leq n}$ where $g_{ij} := \langle \frac{\partial}{\partial x^i},\frac{\partial}{\partial x^j} \rangle_{x_k}$. 
    Let $s$ be an arbitrary element of $ T_{x_k} \mani$ and $ \tilde{s}$ be its local coordinate expression. In a similar way, we define $ \tilde{v}$ as the local coordinate expression of $J_k^* F(x_k) $ and $A$ as the matrix representation of $ J_k^*J_k + \lambda_k I_k$ under this local coordinate. Then, we have
    \begin{eqnarray} \label{ local exp of theta}
        \theta^k(s) &=& \|F(x_k) \|^2 + 2 \langle F(x_k), J_ks \rangle + \langle J_ks, J_ks \rangle + \lambda_k \langle s,s \rangle_{x_k} \notag \\
        &=& \|F(x_k) \|^2 + 2 \langle J_k^*F(x_k), s \rangle_{x_k} + \langle J_k^*J_k s , s \rangle_{x_k} + \lambda_k \langle s,s \rangle_{x_k} \notag \\
        &=& \|F(x_k) \|^2 + 2 \tilde{v}^T G \tilde{s} + \langle (J_k^* J_k +\lambda_k I_k)s ,  s \rangle_{x_k} \notag \\
        &=&  \|F(x_k) \|^2 + 2 \tilde{v}^T G \tilde{s} + \tilde{s}^T G A \tilde{s} ~~ (=: \tilde{\theta}^k (\tilde{s})).
    \end{eqnarray}
    We show that $GA$ is symmetric and positive-definite.
    Denoting the local coordinate expression of $J_k$ as $\tilde{J}_k$, we can express that of $J_k^*$ as $ G^{-1} {\tilde{J}_k}^T$
    and hence $A =  G^{-1} {\tilde{J}_k}^T \tilde{J}_k + \lambda_k I$ holds where $I$ denotes the $n$-dimensional identity matrix. 
    Consequently, 
    \begin{eqnarray}
        GA  &=& G \left(G^{-1} {\tilde{J}_k}^T \tilde{J}_k + \lambda_k I\right) = {\tilde{J}_k}^T \tilde{J}_k + \lambda_k G, \label{GA} \\
        A^T G &=& \left( {\tilde{J}_k}^T \tilde{J}_k G^{-1} + \lambda_k I \right) G = {\tilde{J}_k}^T \tilde{J}_k + \lambda_k G, \notag
    \end{eqnarray}
    and thus, $ GA = (GA)^T $is shown and the positive-definiteness of $GA$ immediately follows from \eqref{GA}.
    Consequently, the optimality condition of minimizing $ \tilde{\theta}^k (\tilde{s})$ is equivalent to the stationary condition
$\frac{\partial}{\partial \tilde{s}} \tilde{\theta}^k(\tilde{s}) = 0$, which is written as $G \left( A \tilde{s} + \tilde{v} \right) = 0$.
      Furthermore, by the positive definiteness of $G$, this is equivalent to
    \begin{equation}\label{local exp of stationary condition}
      A \tilde{s}  = - \tilde{v}.
    \end{equation}
    Considering the coordinate-independent form of (\ref{local exp of stationary condition}), we obtain $ (J_k^* J_k + \lambda_k I_k) s = - J_k^* F(x_k)$.
Therefore, we have reached the desired conclusion.
     \hfill$\Box$
\end{proof}    
 \subsection{How to update the damping parameter}
The damping parameter $ \lambda_k$ in \eqref{original theta_k_s} controls the step length
and needs to be chosen in a manner reflecting how trustworthy the subproblem is: when the subproblem \eqref{sub problem} is close to the original problem \eqref{P}, $ \lambda_k$ is set relatively small and otherwise, it is set relatively large.
In a similar manner to the trust region method\,\cite{nonlineareq}, we evaluate the quality of the solution $s_k$ of \eqref{sub problem} in terms of $\rho_k$ defined by
    \begin{eqnarray} \label{ def of rho_k}
        \rho_k := \frac{f(x_k)-f(R_{x_k}(s_k))}{\frac{1}{2}\left(\theta^k(0_{x_k})-\theta^k(s_k)\right)}.
    \end{eqnarray}
    \tred{Let $\eta\in (0,1)$ be a prefixed constant.}
\tred{If $ \rho_k\ge \eta$ holds, we judge the subproblem is trustworthy, and then update $ x_{k+1}$ as $ x_{k+1} = R_{x_k}(s_k)$.}
In this case, we refer to the $k$-th iteration as a \textit{successful} iteration. Otherwise, it is called \textit{unsuccessful}. We \tred{reject $ R_{x_k}(s_k)$ and set $x_{k+1} = x_k$. After setting $ \lambda_k$ larger, we solve the subproblem again and check whether $ \rho_k\ge \eta$ or not.}

\tred{
  We explain how to tune $\lambda_k$. First, we set
  \begin{eqnarray} \label{update_lambda}
    \lambda_k = \mu_k \| F(x_k) \|^2,
 \end{eqnarray}
    which is a standard choice so as to achieve locally fast convergence in the recent Euclidean LM methods\,\cite{Bergou,mukFk21,mukFk22,mukFk23,mukFk24}. }
The positive parameter $\mu_k $ is updated as shown in  Algorithm~\ref{alg1}.
To ensure global convergence in Section \ref{global} and local convergence for the zero-residual case in Section \ref{local zero}, we set $ \mu_{k+1} = \max{\left(\mu_{\min}, \frac{\bar{\mu}}{\beta} \right) } $, while we set $ \mu_{k+1} = \bar{\mu}$ to establish local convergence for the nonzero-residual case in Section \ref{local nonzero}. The parameter $\text{flag}^{\text{nz}} $ in Algorithm~\ref{alg1} specifies which updating manner for $\mu_k $ is applied and is supposed to be set false except for the nonzero-residual case
in Section \ref{local nonzero}.

\begin{remark}
    Even when we update $\{\mu_k\} $ by $ \mu_{k+1} = \bar{\mu}$, we can still ensure the global convergence property. Meanwhile, the iteration complexity analysis in Section~\ref{iteration complexity analysis}, however, can depend on $ \text{flag}^{\text{nz}}$ in Algorithm~\ref{alg1}. In this paper, we only discuss the iteration complexity of Algorithm~\ref{alg1} with $\text{flag}^{\text{nz}} = \text{false} $.
\end{remark}

    \section{Analysis on global convergence and iteration complexity} \label{global}    
    In this section, 
    \tblue{we set $\text{flag}^{\text{nz}} ={\rm false}$ in Algorithm~\ref{alg1}}. We prove that Algorithm~\ref{alg1} has a global convergence property and then, analyze its iteration complexity. We begin with \tred{giving} some assumptions and lemmas.
        
    We define $\mathcal{S}$ as the set of successful iterations, namely, 
      \begin{eqnarray*}
        \mathcal{S} := \{k \in \{0,1,2,\dots\} \; | \; \rho_k \geq \eta \},
      \end{eqnarray*}
    where $\eta$ is the constant in Algorithm~\ref{alg1}. 
    For the sake of convenience in proofs, we express $\mathcal{S} $ as 
   \begin{equation*}
   \mathcal{S} = \{k(0),k(1),\dots\}.
   \end{equation*}
    \begin{lemma}\label{Cauchy step}
        The $k$-th search direction $s_k$ satisfies $\theta^k(0)-\theta^k(s_k) \geq \frac{\| \grad f(x_k) \|^2}{\|{J_k}\|^2 + \lambda_k}$. 
    \end{lemma}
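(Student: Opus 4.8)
The plan is to exploit the global optimality of $s_k$ for the strongly convex subproblem \eqref{sub problem} by comparing $\theta^k(s_k)$ against the values of $\theta^k$ along the steepest-descent ray, which is the Riemannian transcription of the classical Cauchy-decrease argument. Writing $g := \grad f(x_k)$ and recalling from \eqref{theta_k_s} that $\theta^k(0_{x_k}) = \|F(x_k)\|^2$, I first restrict attention to tangent vectors of the form $s = -t\,g$ with $t \ge 0$. Since $s_k$ minimizes $\theta^k$ over all of $T_{x_k}\mani$, we have $\theta^k(s_k) \le \theta^k(-t g)$ for every $t \ge 0$, so it suffices to maximize $\theta^k(0_{x_k}) - \theta^k(-t g)$ over $t \ge 0$ and to show that this maximum already dominates the claimed quantity.

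Substituting $s = -t g$ into \eqref{theta_k_s} and using $\langle g, -t g\rangle_{x_k} = -t\|g\|_{x_k}^2$, $\|J_k(-t g)\|^2 = t^2\|J_k g\|^2$, and $\lambda_k\|-t g\|_{x_k}^2 = \lambda_k t^2 \|g\|_{x_k}^2$ gives the concave scalar parabola
\[
\theta^k(0_{x_k}) - \theta^k(-t g) = 2t\,\|g\|_{x_k}^2 - t^2\bigl(\|J_k g\|^2 + \lambda_k \|g\|_{x_k}^2\bigr).
\]
Its unconstrained maximizer is $t^\star = \|g\|_{x_k}^2 \big/ \bigl(\|J_k g\|^2 + \lambda_k \|g\|_{x_k}^2\bigr) \ge 0$, and the associated optimal value equals $\|g\|_{x_k}^4 \big/ \bigl(\|J_k g\|^2 + \lambda_k \|g\|_{x_k}^2\bigr)$.

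To conclude, I would bound the denominator using the operator norm: by the definition of $\|J_k\|$ we have $\|J_k g\|^2 \le \|J_k\|^2 \|g\|_{x_k}^2$, hence $\|J_k g\|^2 + \lambda_k\|g\|_{x_k}^2 \le (\|J_k\|^2 + \lambda_k)\|g\|_{x_k}^2$. Inserting this into the optimal value and cancelling one factor of $\|g\|_{x_k}^2$ yields exactly $\|g\|_{x_k}^2/(\|J_k\|^2 + \lambda_k)$, and combining with $\theta^k(0_{x_k}) - \theta^k(s_k) \ge \theta^k(0_{x_k}) - \theta^k(-t^\star g)$ gives the lemma; the degenerate case $g = 0_{x_k}$ (where $s_k = 0_{x_k}$) is trivial since both sides vanish. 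I do not expect a serious obstacle here. The only mild subtlety is bookkeeping of which norm is Euclidean ($\|J_k g\|$, $\|F(x_k)\|$) versus Riemannian ($\|g\|_{x_k}$), and the operator-norm inequality $\|J_k g\| \le \|J_k\|\,\|g\|_{x_k}$ is precisely the device that bridges the two.
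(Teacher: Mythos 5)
Your proof is correct and follows essentially the same route as the paper: the paper defines the Cauchy point $s_k^c = -t^\star\,\grad f(x_k)$ directly (with $t^\star$ identical to your maximizer, since $\langle g,(J_k^*J_k+\lambda_k I_k)g\rangle_{x_k}=\|J_kg\|^2+\lambda_k\|g\|_{x_k}^2$), computes the same decrease $\|g\|_{x_k}^4/\langle g,(J_k^*J_k+\lambda_k I_k)g\rangle_{x_k}$, and applies the same operator-norm bound before invoking the optimality of $s_k$. Your explicit one-dimensional maximization over the ray and the remark on the degenerate case $g=0_{x_k}$ are only cosmetic differences.
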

    
    \begin{proof}
        Define
        \begin{eqnarray} \label{ defi of Cauchy step}
            s_k^c &:=& - \frac{\| \grad f(x_k) \|^2}{\langle \grad f(x_k),({J_k}^* J_k + \lambda_k I_k ) \grad f(x_k) \rangle} \grad f(x_k).
        \end{eqnarray}
        First, we have
        \begin{alignat}{2}
            & \; \; \; \; \; \theta^k(0)-\theta^k(s_k^c) \notag \\
            & = -2\langle \text{grad}f(x_k),s_k^c \rangle - \| J_k s_k^c \|^2 - \lambda_k \|s_k^c \|^2
            & \quad & \text{ (by \eqref{theta_k_s}) } \notag \\
            &= \frac{2\| \text{grad}f(x_k) \|^4}{\langle \text{grad}f(x_k),({J_k}^* J_k + \lambda_k I_k ) \text{grad}f(x_k) \rangle} - \left( \| J_k s_k^c \|^2 + \lambda_k \|s_k^c \|^2 \right), \label{Appendix D 1}
        \end{alignat}
        where \eqref{ defi of Cauchy step} is used in the last equality.
        \tred{Moreover, we obtain}
    \begin{eqnarray*}
     && \|J_k s_k^c \|^2 + \lambda_k \|s_k^c \|^2 \\
     &=&  \frac{\| \text{grad}f(x_k) \|^4}{\langle \text{grad}f(x_k),({J_k}^* J_k + \lambda_k I_k ) \text{grad}f(x_k) \rangle ^2 } \left( \langle  J_k\text{grad}f(x_k), J_k \text{grad}f(x_k) \rangle + \lambda_k \langle \grad f(x_k), \grad f(x_k) \rangle  \right)  \\
     &=& \frac{\| \text{grad}f(x_k) \|^4}{\langle \text{grad}f(x_k),({J_k}^* J_k + \lambda_k I_k ) \text{grad}f(x_k) \rangle ^2 } \langle \text{grad}f(x_k), ({J_k}^*J_k + \lambda_k I_k)\text{grad}f(x_k) \rangle \\
     &=& \frac{\| \text{grad}f(x_k) \|^4}{\langle \text{grad}f(x_k),({J_k}^* J_k + \lambda_k I_k ) \text{grad}f(x_k) \rangle },
    \end{eqnarray*}
    where \eqref{ defi of Cauchy step} is applied in the first equality.
    Then, \tred{from \eqref{Appendix D 1}} we obtain
    \begin{eqnarray*}
     \theta^k(0)-\theta^k(s_k^c) &=& \frac{\| \text{grad}f(x_k) \|^4}{\langle \text{grad}f(x_k),({J_k}^* J_k + \lambda_k I_k ) \text{grad}f(x_k) \rangle } \\
     & \geq & \frac{\| \text{grad}f(x_k) \|^2}{\|{J_k}\|^2 + \lambda_k}.
    \end{eqnarray*}
    
    From the above inequality and the fact that $s_k = \underset{s \in T_{x_k} \mani}{\text{arg min }} \theta^k(s)$, it follows that
    \begin{eqnarray} \label{inequ from Cauchy step}
        \theta^k(0)-\theta^k(s_k) \geq \frac{\| \text{grad}f(x_k) \|^2}{\|{J_k}\|^2 + \lambda_k}.
    \end{eqnarray}
    \hfill$\Box$ 
    \end{proof}
    
    \Add{The following lemma will be used not only in this section but also in Section~\ref{chap:local}}
    \begin{lemma} \label{lemm for two inequs}
        The solution $s_k$ of \eqref{linear eq of LM} satisfies the following:
        \begin{eqnarray}
            && \|s_k \| \leq \frac{\|\grad f(x_k) \|}{\lambda_k}, \label{s_k grad rela} \\
            && -\langle \grad f(x_k), s_k \rangle \geq \frac{\| \grad f(x_k) \|^2 }{\|J_k\|^2 + \lambda_k}. \label{inner prod of grad s_k}
        \end{eqnarray}
    \end{lemma}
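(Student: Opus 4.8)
The plan is to start from the characterization of $s_k$ supplied by Proposition~\ref{charcterization of search direction}, namely that $s_k$ is the unique solution of $(J_k^* J_k + \lambda_k I_k)s_k = -\grad f(x_k)$, and then to exploit the spectral properties of the self-adjoint positive-definite operator $H_k := J_k^* J_k + \lambda_k I_k$ on $T_{x_k}\mani$. Since $J_k^* J_k$ is self-adjoint with respect to $\langle\cdot,\cdot\rangle_{x_k}$ (by the defining adjoint relation \eqref{def of adj}) and positive semidefinite with largest eigenvalue equal to $\|J_k\|^2$ (by the remark in the Notations section), the spectral theorem gives the operator inequalities $\lambda_k I_k \preceq H_k \preceq (\|J_k\|^2 + \lambda_k)I_k$. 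In particular $H_k$ is invertible and $s_k = -H_k^{-1}\grad f(x_k)$.

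For the bound \eqref{s_k grad rela}, I would pair the linear equation with $s_k$: taking the inner product of $H_k s_k = -\grad f(x_k)$ with $s_k$ yields $\|J_k s_k\|^2 + \lambda_k\|s_k\|^2 = -\langle \grad f(x_k), s_k\rangle$. Bounding the left-hand side below by $\lambda_k\|s_k\|^2$ and the right-hand side above by $\|\grad f(x_k)\|\,\|s_k\|$ via Cauchy--Schwarz gives $\lambda_k\|s_k\|^2 \le \|\grad f(x_k)\|\,\|s_k\|$. Dividing by $\|s_k\|$ (the case $s_k = 0_{x_k}$ being trivial) produces exactly $\|s_k\| \le \|\grad f(x_k)\|/\lambda_k$.

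For the bound \eqref{inner prod of grad s_k}, I would substitute $s_k = -H_k^{-1}\grad f(x_k)$ to write $-\langle \grad f(x_k), s_k\rangle = \langle \grad f(x_k), H_k^{-1}\grad f(x_k)\rangle$. Since $H_k^{-1}$ is self-adjoint positive definite with smallest eigenvalue $(\|J_k\|^2 + \lambda_k)^{-1}$, the Rayleigh-quotient inequality $\langle v, H_k^{-1}v\rangle \ge \lambda_{\min}(H_k^{-1})\,\|v\|^2$ applied to $v = \grad f(x_k)$ delivers $-\langle \grad f(x_k), s_k\rangle \ge \|\grad f(x_k)\|^2/(\|J_k\|^2 + \lambda_k)$, as claimed.

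Neither estimate presents a serious obstacle; the only point requiring care is justifying the eigenvalue bounds intrinsically rather than through a local-coordinate matrix representation as in the proof of Proposition~\ref{charcterization of search direction}. The cleanest route is to observe that $J_k^* J_k$ is genuinely self-adjoint with respect to the Riemannian inner product $\langle\cdot,\cdot\rangle_{x_k}$, so the spectral theorem applies directly to $H_k$ and the operator inequalities above hold without reference to any chart. This keeps the whole argument coordinate-free and makes both inequalities immediate consequences of the spectral bounds on $H_k$ and $H_k^{-1}$.
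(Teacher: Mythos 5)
Your proposal is correct and follows essentially the same route as the paper: both inequalities are derived from the linear equation $(J_k^*J_k+\lambda_k I_k)s_k=-\grad f(x_k)$, and your treatment of \eqref{inner prod of grad s_k} via $s_k=-(J_k^*J_k+\lambda_k I_k)^{-1}\grad f(x_k)$ and the eigenvalue bound is identical to the paper's. The only (immaterial) difference is in \eqref{s_k grad rela}, where the paper takes the squared norm of both sides of the equation to get $\|\grad f(x_k)\|^2\ge\lambda_k^2\|s_k\|^2$, whereas you pair the equation with $s_k$ and apply Cauchy--Schwarz; both are one-line manipulations of the same identity.
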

    
    \begin{proof}
        First, we prove \eqref{s_k grad rela}. By \eqref{linear eq of LM}, $\| \grad f(x_k) \|^2 $ satisfies
        \begin{eqnarray*}
            \| \grad f(x_k) \|^2 
            &=& \langle (J_k^* J_k + \lambda_k I_k)s_k, (J_k^* J_k + \lambda_k I_k)s_k \rangle \\
            &=& \| (J_k^* J_k)s_k \|^2 + 2 \lambda_k \| J_ks_k \|^2 + \lambda_k^2 \| s_k \|^2 \\
            & \geq & \lambda_k^2 \| s_k \|^2,
        \end{eqnarray*}
        which leads to \eqref{s_k grad rela}. 
    
        Next, we show \eqref{inner prod of grad s_k}. \tred{Since $s_k$ can be written as $ s_k = - \left( J_k^* J_k + \lambda_k I_k\right)^{-1} \grad f(x_k)$ from \eqref{linear eq of LM},}
        we obtain 
        \begin{eqnarray*}
            - \langle \grad f(x_k), s_k \rangle &=& \langle \grad f(x_k), \left( J_k^* J_k + \lambda_k I_k\right)^{-1}\grad f(x_k) \rangle \\
            & \geq & \frac{\| \grad f(x_k) \|^2 }{\|J_k\|^2 + \lambda_k},
        \end{eqnarray*}
        which proves \eqref{inner prod of grad s_k}.
        \hfill$\Box$ 
    \end{proof}
    
    \begin{assume}\label{assume global 1} 
        The Jacobian matrix $J : T\mani \rightarrow \mathbb{R}^m $ of $F$ and its adjoint $J^*$ are bounded on 
        $\mathcal{L}(x_0) := \{ x \in \mathcal{M} \; | \; f(x) \leq f(x_0)\}$,
        i.e., there exists $  M>0 $ such that $ \max{ \{ \| J(x) \|, \| J(x)^* \| \} } \leq M $ holds for all $ x \in \mathcal{L}(x_0)$. \\
    \end{assume}

    \subsection{Global convergence}
    \tred{Now, the global convergence theorem of the RLM is presented below.} Before moving to the main theorem, we show
    the following lemma.
    
    \begin{lemma} \label{mu_k s_k is bounded}
        \Add{Under Assumption~\ref{assume global 1}}, if $\underset{j \rightarrow \infty}{\liminf} \| F(x_{k(j)}) \| > 0 $, then $\underset{j \rightarrow \infty}{\limsup}\; \mu_{k(j)} \|s_{k(j)}\| < \infty$ holds.
    \end{lemma}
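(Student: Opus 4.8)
The plan is to reduce the claim to a single elementary estimate: I will bound $\mu_{k(j)}\|s_{k(j)}\|$ from above by a quantity that is inversely proportional to $\|F(x_{k(j)})\|$, and then pass to the $\limsup$. The natural starting point is the norm bound \eqref{s_k grad rela} of Lemma~\ref{lemm for two inequs}, which gives $\|s_k\| \le \|\grad f(x_k)\|/\lambda_k$. Substituting the damping rule $\lambda_k = \mu_k\|F(x_k)\|^2$ and multiplying by $\mu_k$ yields
\[
\mu_k\|s_k\| \;\le\; \frac{\|\grad f(x_k)\|}{\|F(x_k)\|^2}.
\]

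Next I would control the numerator. Reading off the right-hand side of \eqref{linear eq of LM}, we have $\grad f(x_k) = J_k^* F(x_k)$, so that $\|\grad f(x_k)\| \le \|J_k^*\|\,\|F(x_k)\|$. To bound $\|J_k^*\|$ by $M$ via Assumption~\ref{assume global 1}, I must first confirm that every iterate lies in the sublevel set $\mathcal{L}(x_0)$. This is where a short argument (rather than pure calculation) is needed: on an unsuccessful step $x_{k+1}=x_k$, while on a successful step the denominator $\tfrac12(\theta^k(0_{x_k})-\theta^k(s_k))$ is nonnegative because $s_k$ minimizes $\theta^k$, and $\rho_k\ge\eta>0$ then forces $f(x_k)-f(x_{k+1})\ge 0$. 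Hence $\{f(x_k)\}$ is nonincreasing and $x_k\in\mathcal{L}(x_0)$ for all $k$. With this established, $\|\grad f(x_k)\| \le M\|F(x_k)\|$, and the previous display becomes
\[
\mu_k\|s_k\| \;\le\; \frac{M}{\|F(x_k)\|}.
\]

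Finally I would restrict to the successful subsequence and take limits. From $\mu_{k(j)}\|s_{k(j)}\| \le M/\|F(x_{k(j)})\|$ and the fact that, for a positive sequence, the $\limsup$ of the reciprocal equals the reciprocal of the $\liminf$, I obtain
\[
\limsup_{j\to\infty} \mu_{k(j)}\|s_{k(j)}\| \;\le\; \frac{M}{\displaystyle\liminf_{j\to\infty}\|F(x_{k(j)})\|}.
\]
The hypothesis $\liminf_{j\to\infty}\|F(x_{k(j)})\|>0$ makes the right-hand side finite, which is exactly the assertion.

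I do not anticipate any genuine difficulty here; once Lemma~\ref{lemm for two inequs} is in place, the estimate is essentially one line of algebra. The only step that warrants explicit justification rather than mechanical substitution is the claim that the iterates never leave $\mathcal{L}(x_0)$, so that the boundedness constant $M$ from Assumption~\ref{assume global 1} is available at each $x_k$; the remaining work is the substitution of the damping rule and the elementary behaviour of $\limsup$ under reciprocals of a bounded-below positive sequence.
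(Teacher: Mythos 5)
Your proposal is correct and follows essentially the same route as the paper: bound $\|s_{k(j)}\|$ via \eqref{s_k grad rela}, substitute $\lambda_k=\mu_k\|F(x_k)\|^2$, use $\|\grad f(x_{k(j)})\|\le\|J_{k(j)}^*\|\,\|F(x_{k(j)})\|\le M\|F(x_{k(j)})\|$ from Assumption~\ref{assume global 1}, and pass to the $\limsup$. Your extra remark verifying that the iterates stay in $\mathcal{L}(x_0)$ (so that the bound $M$ applies) is a welcome piece of care that the paper leaves implicit, but it does not change the argument.
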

    
    \begin{proof}
        Using \eqref{s_k grad rela} \tred{and $\mu_k=\frac{\lambda_k}{\|F(x_k)\|^2}$}, we have $ \mu_{k(j)} \|s_{k(j)}\| \leq \frac{ \| \grad f(x_{k(j)}) \|}{\|F(x_{k(j)}) \|^2}$. Moreover, we have $\| \grad f(x_{k(j)}) \| = \| {J^*_{k(j)}} F(x_{k(j)}) \| \leq \|{J^*_{k(j)}} \|  \| F(x_{k(j)})\| \leq M \| F(x_{k(j)}) \| $ from Assumption \ref{assume global 1}. 
        Combining \tred{these relationships}, we obtain $\mu_{k(j)} \|s_{k(j)}\| \leq \frac{M}{\| F(x_{k(j)})\|} $. Therefore, $ \underset{j \rightarrow \infty}{\limsup}\; \mu_{k(j)} \|s_{k(j)}\| \leq M \underset{j \rightarrow \infty}{\limsup} \frac{1}{\| F(x_{k(j)}) \|} = \frac{M}{ \underset{j \rightarrow \infty}{\liminf} \| F(x_{k(j)}) \|} < \infty$.
        \hfill$\Box$ 
    \end{proof}

    \begin{theorem} \label{ global lim inf}
        Under Assumption \ref{assume global 1}, $ \underset{k \rightarrow \infty}{\liminf} \| \grad f(x_k)\| = 0$. 
    \end{theorem}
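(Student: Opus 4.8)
The plan is to argue by contradiction. Suppose $\liminf_{k\to\infty}\|\grad f(x_k)\|>0$, so that there are $\bar\epsilon>0$ and an index $K_0$ with $\|\grad f(x_k)\|\geq\bar\epsilon$ for all $k\geq K_0$. Since every successful step strictly decreases $f$ (by Lemma~\ref{Cauchy step} the model reduction $P_k:=\tfrac12(\theta^k(0_{x_k})-\theta^k(s_k))$ is nonnegative, and $\rho_k\geq\eta>0$ forces an actual decrease), while every unsuccessful step leaves $x_k$ unchanged, the sequence $\{f(x_k)\}$ is non-increasing and bounded below by $0$, hence convergent. In particular $x_k\in\mathcal{L}(x_0)$ for all $k$, so Assumption~\ref{assume global 1} gives $\|J_k\|\leq M$ throughout; moreover $\|\grad f(x_k)\|=\|J_k^*F(x_k)\|\leq M\|F(x_k)\|$ yields $\|F(x_k)\|\geq\bar\epsilon/M$ for $k\geq K_0$, while $\|F(x_k)\|^2=2f(x_k)\leq 2f(x_0)$ bounds $\|F(x_k)\|$ from above.

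First I would dispose of the case in which only finitely many iterations are successful. Then $x_k$ is eventually frozen at some $\hat x$, and since each subsequent unsuccessful step multiplies $\mu_k$ by $\beta>1$, we get $\mu_k\to\infty$ and hence $\lambda_k=\mu_k\|F(\hat x)\|^2\to\infty$ (note $F(\hat x)\neq0$ as $\|\grad f(\hat x)\|\geq\bar\epsilon$). Writing $s_k=-(J_k^*J_k+\lambda_k I_k)^{-1}\grad f(\hat x)$ one checks $\|s_k\|\to0$ and $\lambda_k\|s_k\|\to\|\grad f(\hat x)\|$; expanding the true reduction $f(\hat x)-f(R_{\hat x}(s_k))$ against $P_k=\tfrac12(\|J_ks_k\|^2+\lambda_k\|s_k\|^2)$ then gives $\rho_k\to 2>\eta$, so some late iteration would be successful, a contradiction. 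Thus the successful set $\mathcal{S}=\{k(0),k(1),\dots\}$ must be infinite.

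With $\mathcal{S}$ infinite, convergence of $\{f(x_k)\}$ makes $\sum_{j}\bigl(f(x_{k(j)})-f(x_{k(j)+1})\bigr)$ finite, while Lemma~\ref{Cauchy step} gives $f(x_{k(j)})-f(x_{k(j)+1})\geq\eta P_{k(j)}\geq\tfrac{\eta\bar\epsilon^2}{2(M^2+\lambda_{k(j)})}$. Summability forces $\lambda_{k(j)}\to\infty$, and since $\|F(x_{k(j)})\|^2\leq 2f(x_0)$, also $\mu_{k(j)}\to\infty$. The contradiction will come from showing $\{\mu_k\}$ is in fact bounded. For this I would exhibit a threshold $\lambda^\ast$ with the property that $\lambda_k\geq\lambda^\ast$ forces $\rho_k\geq\eta$: at an unsuccessful step, combining $P_k\geq\tfrac12\lambda_k\|s_k\|^2$ with the expansion of $\rho_k<\eta$ yields $\tfrac12\lambda_k\|s_k\|^2\leq(\|F(x_k)\|+M\|s_k\|)\|r_k\|+\tfrac12\|r_k\|^2$, where $r_k:=F(R_{x_k}(s_k))-F(x_k)-J_ks_k$ is the linearization remainder. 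Bounding $\|r_k\|$ quadratically in $\|s_k\|$ and dividing by $\|s_k\|^2$ (using $\|s_k\|\leq\|\grad f(x_k)\|/\lambda_k$ from \eqref{s_k grad rela} together with $\|F(x_k)\|\leq\sqrt{2f(x_0)}$) bounds $\lambda_k$ above at every unsuccessful step. Since $\mu_k$ only increases on unsuccessful steps and is reset downward on successful ones, a routine induction then gives $\mu_k\leq\max(\mu_{K_0},\beta\mu^\ast)$ for all large $k$, contradicting $\mu_{k(j)}\to\infty$.

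The main obstacle is this last step, namely controlling the gap between the true decrease $f(x_k)-f(R_{x_k}(s_k))$ and the model decrease $P_k$ uniformly in $k$. Mere boundedness of $J$ (Assumption~\ref{assume global 1}) only yields $\|r_k\|=o(\|s_k\|)$ pointwise, which is not uniform along the iterates; the quantitative estimate $\|r_k\|=O(\|s_k\|^2)$ needed to make $\lambda^\ast$ independent of $k$ relies on the $L$-smoothness (Lipschitz-type) condition on the residual map that the introduction announces as a standing hypothesis of this section. The step-length estimates of Lemma~\ref{lemm for two inequs} and the bound $\limsup_{j}\mu_{k(j)}\|s_{k(j)}\|<\infty$ of Lemma~\ref{mu_k s_k is bounded}, combined with Assumption~\ref{assume global 1}, are exactly the ingredients that convert this remainder control into the required upper bound on $\lambda_k$, and hence on $\mu_k$.
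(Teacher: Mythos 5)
Your handling of the finitely-many-successes case is sound and essentially matches the paper's case (ii): at a frozen point $\hat x$ with $\grad f(\hat x)\neq 0$, driving $\lambda_k\to\infty$ gives $\lambda_k s_k\to-\grad f(\hat x)$, and the pointwise $C^1$ Taylor remainder $o(\|s_k\|)$ is enough to force an eventual successful step. However, there is a genuine gap in your main case (infinitely many successes with $\mu_{k(j)}\to\infty$): your contradiction hinges on a threshold $\lambda^\ast$ such that $\lambda_k\geq\lambda^\ast$ forces $\rho_k\geq\eta$, and, as you yourself note, this requires the uniform quadratic remainder bound $\|r_k\|=O(\|s_k\|^2)$, i.e.\ the $L$-Lipschitz continuity of $\grad f$ (Assumption~\ref{assume global 3}) together with a second-order retraction (Lemma~\ref{descent lemma}). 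Neither is a hypothesis of Theorem~\ref{ global lim inf}: the theorem is stated under Assumption~\ref{assume global 1} alone, and the paper introduces $L$-smoothness only afterwards, in the iteration-complexity subsection (it is there, not here, that the ``$\lambda_k\geq\kappa$ implies success'' argument of Lemma~\ref{lambda bounded} lives). So your argument proves a strictly weaker statement; under the stated hypotheses the step ``$\{\mu_k\}$ is bounded'' is not available, and indeed Corollary~\ref{coro} treats boundedness of $\{\mu_k\}$ as an extra assumption rather than a consequence.

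The paper closes exactly this case without any smoothness beyond $C^1$ by a different route: rather than bounding $\mu_k$, it keeps the telescoping inequality in the form $f(x_{k(j+1)})-f(x_{k(j)})\leq -\tfrac{\eta}{2}\,\frac{\mu_{k(j)}^2\|F(x_{k(j)})\|^4}{M^2+\mu_{k(j)}\|F(x_0)\|^2}\|s_{k(j)}\|^2$ (using $\|\grad f(x_k)\|\geq\lambda_k\|s_k\|$ from Lemma~\ref{lemm for two inequs}), extracts a subsequence along which the prefactor diverges, and concludes $\liminf_j\|F(x_{k(j)})\|^2\|s_{k(j)}\|=0$. It then splits on $\liminf_j\|F(x_{k(j)})\|$: if it is zero, $\grad f=J^*F\to 0$ along a subsequence by Assumption~\ref{assume global 1}; if it is positive, $\|s_{k(j)}\|\to 0$ along a subsequence and $\|\grad f(x_{k(j)})\|\leq(M^2+\|F(x_0)\|^2\mu_{k(j)})\|s_{k(j)}\|\to 0$ by Lemma~\ref{mu_k s_k is bounded}. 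If you want to keep your contradiction framework under the theorem's actual hypotheses, you need to replace the ``$\{\mu_k\}$ is bounded'' step with an argument of this kind.
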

    
    \begin{proof} 
      Note that $s_k = 0_{x_k}$ \tred{if and only if} $\text{grad}f(x_k) = 0_{x_k} $.
        Moreover, notice that, for $k(j),k(j+1)\in \mathcal{S}$, we have
        \begin{equation}
          x_{\ell}=x_{k(j+1)} \ \text{ for all } \ell \text{ such that } k(j)+1\le \ell \le k(j+1).
          \label{eq:0516-1}
        \end{equation}
      We prove the assertion by considering two cases: (i) $ |\mathcal{S} | = \infty$ and (ii) $ |\mathcal{S} | < \infty$. 

    \tred{{\bf Proof for the case~(i):}} We consider the case where $|\mathcal{S}| = \infty$. For arbitrary $j \in \{0,1,2,\dots\}$, it holds that 
    \begin{alignat}{2}
     & \; \; \; \; \; f(x_{k(j+1)})-f(x_{k(j)}) \notag \\
     &= f(x_{k(j)+1})-f(x_{k(j)})&\quad &\tred{(\text{by }\eqref{eq:0516-1})}\notag \\
     &\leq - \frac{\eta}{2}\left(\theta^{k(j)}(0)-\theta^{k(j)}(s_{k(j)})\right) 
     & \quad &  ( \text{by } \eqref{ def of rho_k} \text{ and } k(j) \in \mathcal{S})\notag \\
     &\leq - \frac{\eta}{2} \frac{\| \text{grad}f(x_{k(j)}) \|^2}{\| J_{k(j)} \|^2 + \lambda_{k(j)}}
     &\quad& (\text{by } \eqref{inequ from Cauchy step})\notag \\
     &\leq - \frac{\eta}{2} \frac{\| \text{grad}f(x_{k(j)}) \|^2}{M^2 + \mu_{k(j)}\|F(x_{k(j)})\|^2}
     &\quad& ( \text{by } \text{Assumption }\ref{assume global 1}).\label{ f is decreasing}
     \end{alignat}
    The above implies that $\{f(x_k)\}_{k \in \mathcal{S}}$ is monotonically decreasing. Furthermore, this property and Algorithm~\ref{alg1} lead to the fact that $\{f(x_k)\} $ is monotonically non-increasing. Moreover, by the definition of $f$, $ f(x_{k(j)}) \geq 0$ holds for all $j \in \{0,1,2,\dots\}$.
    Therefore,
\begin{equation}
  f(x_{k(j+1)})-f(x_{k(j)}) \rightarrow 0 \; (j \rightarrow \infty)\label{eq:0516-2}
\end{equation}
  holds.
  In what follows, we will show that 
    \begin{equation}
    \underset{j \rightarrow \infty}{\liminf} \;  \| \text{grad}f(x_{k(j)}) \| = 0.\label{eq:0522-1}
    \end{equation}
\tred{To this end, we further divide the current case\,(i) into two cases}: \tred{(i-a)} $\{\mu_k\} $ is bounded and \tred{(i-b)} $\{\mu_k\}$ is unbounded.     
 %

    \tred{Consider the case~(i-a).
        Let $ \hat{\mu} := \underset{k}{\sup} \mu_k < \infty$.
 By \eqref{ f is decreasing}, we have}
    \begin{eqnarray*}
        f(x_{k(j+1)})-f(x_{k(j)}) &\leq& - \frac{\eta}{2} \frac{\| \text{grad}f(x_{k(j)}) \|^2}{M^2 + \tred{\hat{\mu} }\|F(x_{k(j)})\|^2} \\
        &\leq&  - \frac{\eta}{2} \frac{\| \text{grad}f(x_{k(j)}) \|^2}{M^2 + \tred{\hat{\mu} }\|F(x_0)\|^2},
    \end{eqnarray*}
    where the second inequality follows from $\| F(x_{k(j)}) \| \leq \| F(x_{0}) \| $ by the monotonically non-increasing property of $ \{f(x_k)\}$.
    \tred{Combining this with \eqref{eq:0516-2} yields}
    $ \lim_{j \rightarrow \infty} \| \text{grad}f(x_{k(j)}) \| = 0 $. 
\tred{Thus, \eqref{eq:0522-1} is established in the case\,(i-a).}
    
    \tred{We next consider the case\,(i-b)} where $ \{\mu_k \}$ is unbounded. \tred{From the construction of Algorithm~\ref{alg1} along with the assumptions that $ | \mathcal{S}|=\infty $ and $ \{\mu_k\}$ is unbounded}, it follows that $ \{\mu_{k(j)}\}$ is unbounded.
    Noting $ \| \grad f(x_{k(j)}) \|^2 \geq \lambda_{k(j)}^2 \|s_{k(j)}\|^2 $ from Lemma~\ref{lemm for two inequs},  \eqref{update_lambda} and \eqref{ f is decreasing}, we have 
    \begin{eqnarray}
        f(x_{k(j+1)})-f(x_{k(j)}) &\leq& - \frac{\eta}{2} \frac{ \mu_{k(j)}^2 \| F(x_{k(j)}) \|^4}{M^2 + \mu_{k(j)}\|F(x_{k(j)})\|^2} \| s_{k(j)} \|^2 \notag \\
        &\leq&  - \frac{\eta}{2} \frac{ \mu_{k(j)}^2 \| F(x_{k(j)}) \|^4}{M^2 + \mu_{k(j)}\|F (x_0)\|^2} \| s_{k(j)} \|^2  , \label{ f dec 2}
    \end{eqnarray}
    where the second inequality follows from $\| F(x_{k(j)}) \| \leq \| F(x_{0}) \| $.
       \tred{From \eqref{ f dec 2} and \eqref{eq:0516-2}}, it follows that 
    \begin{eqnarray}
        \lim_{j \rightarrow \infty } \frac{ \mu_{k(j)}^2 \| F(x_{k(j)}) \|^4}{M^2 + \mu_{k(j)}\|F (x_0)\|^2} \| s_{k(j)} \|^2  = 0.\label{lim of right}
    \end{eqnarray}
Since $\{\mu_{k(j)}\}$ is unbounded, there exists a subsequence such that 
$\frac{ \mu_{k(j)}^2}{M^2 + \mu_{k(j)}\|F (x_0)\|^2}$ diverges as $j\to\infty$.
This fact along with \eqref{lim of right} results in
    \begin{eqnarray}
        \liminf_{j \rightarrow \infty} \| F(x_{k(j)}) \|^2 \| s_{k(j)} \| = 0, \label{lim inf F_k s_k}
    \end{eqnarray}
    from which we will derive \eqref{eq:0522-1} below.  Notice that 
    \begin{eqnarray*}
        0 = \liminf_{j \rightarrow \infty} \| F(x_{k(j)}) \|^2 \| s_{k(j)} \| \geq \left( \liminf_{j \rightarrow \infty} \| F(x_{k(j)}) \|^2\right) \left( \liminf_{j \rightarrow \infty} \|s_{k(j)} \|\right).
    \end{eqnarray*}
Suppose $\underset{j \rightarrow \infty}{\liminf} \| F(x_{k(j)}) \|^2 >0 $. We then obtain $\underset{j \rightarrow \infty}{\liminf} \|s_{k(j)} \| = 0.$
    Then, there exists some \tred{$J\subseteq \{0,1,2,\dots\} $} such that 
    $ \underset{j \in J, j \rightarrow \infty} \lim \|s_{k(j)} \| = 0$. Using this and \eqref{linear eq of LM}, we obtain 
    \begin{eqnarray*}
        \lim_{j \in J, j \rightarrow \infty} \| \grad f(x_{k(j)}) \| &=& \lim_{j \in J, j \rightarrow \infty} \left\| \left( J_{k(j)}^* J_{k(j)} + \mu_{k(j)} \| F(x_{k(j)}) \|^2 I_{k(j)} \right) s_{k(j)}  \right\| \\
        & \leq & \lim_{j \in J, j \rightarrow \infty}   \left\| J_{k(j)}^* J_{k(j)} + \mu_{k(j)} \| F(x_{k(j)}) \|^2 I_{k(j)} \right\|   \| s_{k(j)} \| \\
        &\leq& \lim_{j \in J, j \rightarrow \infty} \left(M^2 + \| F(x_0) \|^2 \mu_{k(j)} \right) \|s_{k(j)} \|  \\
        &=& 0, 
    \end{eqnarray*}
where 
$I_{x_{k(j)}}$ denotes the identity mapping on $T_{x_{k(j)}}\mani$ and 
the last equality follows from $ \underset{j \in J, j \rightarrow \infty} \lim \|s_{k(j)} \| = 0$ and Lemma \ref{mu_k s_k is bounded}. Next, suppose $\underset{j \rightarrow \infty}{\liminf} \| F(x_{k(j)}) \|^2 =0 $.
    \tred{Since $\grad f(x_{k(j)}) = J_{k(j)}^* F(x_{k(j)})$ and $\|J_{k(j)}^*\|$ is bounded by Assumption \ref{assume global 1}, \eqref{eq:0522-1} is ensured. 
Consequently, \eqref{eq:0522-1} is established in the case (i)-b.}

Now, combining the cases (i)-a and (i)-b, we gain \eqref{eq:0522-1} in the whole case (i). 
Lastly, by noting \eqref{eq:0516-1} again,
$$\underset{k \rightarrow \infty}{\liminf} \| \grad f(x_{k}) \|
\le \liminf_{j \rightarrow \infty} \| \text{grad}f(x_{k(j)}) \| = 0$$
is ensured.
    
    
     \tred{{\bf Proof for the case~(ii)}: In turn, we} consider the case (ii) where $ |\mathcal{S}| < \infty$.
      For each $ \mu > 0 $, define 
    \begin{align*}
    \theta^k_{\mu}(s) &:= \| F(x_k) + J_k s \|^2 + \mu \| F(x_k) \|^2 \| s \|^2,\\ 
    s_k(\mu) &:= \underset{s \in T_{x_k} \mathcal{M}}{\text{arg min }} \theta^k_\mu(s).
    \end{align*}
    Then, by replacing $ \lambda_k $ with $\mu \| F(x_k) \|^2$ in Lemma~\ref{lemm for two inequs}, we have
    \begin{eqnarray}
              && \|s_k(\mu) \| \leq \frac{\| \grad f(x_k) \|}{\mu \|F(x_k)\|^2} \label{s_k boundef by grad}, \\
        && - \langle \grad f(x_k), s_k(\mu) \rangle \geq \frac{\| \grad f(x_k) \|^2}{\|J_k\|^2 + \mu \|F(x_k)\|^2}. \label{grad sk inner product} 
    \end{eqnarray}
    \tred{To derive a contradiction, we suppose $ \| \grad f( x_{\bar{k}} ) \| \neq 0$.
    Since $\bar{k} := \max_{k \in \mathcal{S}} k <\infty$ by assumption,} 
    all iterations after the $\bar{k}$-th iteration are unsuccessful, \tred{implying that}
    \begin{eqnarray*}
        f(x_{\bar{k}})-f(R_{x_{\bar{k}}}(s_{\bar{k}}(\mu))) < \frac{\eta}{2}(\theta^{\bar{k}}_\mu(0)-\theta^{\bar{k}}_\mu(s_{\bar{k}}(\mu))) \; \text{ for all  }  \; \mu \geq \mu_{\bar{k}} .
    \end{eqnarray*} 
\tred{As it holds that} 
    \begin{eqnarray}
        && \theta^{\bar{k}}_\mu(0)-\theta^{\bar{k}}_\mu(s_{\bar{k}}(\mu)) \notag \\
        &=& -2 \langle \text{grad}f(x_{\bar{k}}),s_{\bar{k}}(\mu) \rangle - \| J_{\bar{k}} s_{\bar{k}}(\mu) \|^2 - \mu \|F (x_{\bar{k}}) \|^2 \|s_{ \bar{k}} (\mu)\|^2 
        \notag \\
        & \leq & -2 \langle \text{grad}f(x_{\bar{k}}),s_{\bar{k}}(\mu) \rangle, \label{diffe of theta}
    \end{eqnarray}
    \tred{where the equality holds in a way analogous to  \eqref{theta_k_s}, we obtain}  
    \begin{eqnarray}\label{global (i)}
       f(x_{\bar{k} })-f(R_{x_{\bar{k} }}(s_{\bar{k} }(\mu))) < - \eta \; \langle \text{grad}f(x_{\bar{k} }),s_{\bar{k} }(\mu) \rangle 
    \end{eqnarray}
    for all $\mu \geq \mu_{\bar{k} }$.
    By \eqref{s_k boundef by grad} with $k = \bar{k}$, 
    \begin{equation}
    \lim_{\mu\to \infty} \|s_{\bar{k} }(\mu)\|=0.\label{eq:0522-2}
    \end{equation}	
     From the $C^1$ property of $f$, Taylor's expansion yields that 
    \begin{eqnarray}
      f(R_{x_{\bar{k} }}(s_{\bar{k} }(\mu))) = f(x_{\bar{k} })+\langle \text{grad}f(x_{\bar{k} }),s_{\bar{k} }(\mu) \rangle + e_\mu \label{star}
    \end{eqnarray}
    with $ e_\mu  = o \left( \| s_{\bar{k} }(\mu) \| \right) $. By combining \eqref{star} with (\ref{global (i)}), we find that
    \begin{eqnarray} \label{ e_mu inequ}
        -e_\mu < (1-\eta)\langle \text{grad}f(x_{\bar{k} }),s_{\bar{k} }(\mu) \rangle
    \end{eqnarray}
    holds. Since $ \eta \in (0,1)$, the right-hand side is negative and thus $e_\mu > 0$ follows, \tred{from which} we have
    \begin{alignat}{2}
        & \; \; \; \; \; \frac{e_\mu}{\|s_{\bar{k} }(\mu)\|} \notag \\
        &> (1-\eta) \frac{\| \grad f(x_{\bar{k} }) \|^2}{\|J_{\bar{k} }\|^2 + \mu \|F (x_{\bar{k} })\|^2} \frac{1}{\| s_{k(\mu)} \|} \notag
        & \quad & (\text{by } \eqref{grad sk inner product} \text{ and } \eqref{ e_mu inequ}) \\
        &\geq  (1-\eta) \frac{\| \grad f(x_{\bar{k} }) \|}{\|J_{\bar{k} }\|^2 + \mu \|F (x_{\bar{k} })\|^2} \mu \| F(x_{\bar{k}}) \|^2  \notag
        &\quad& (\text{by } \eqref{s_k boundef by grad}),
    \end{alignat}
    which is equivalent to 
    \begin{eqnarray}
        \| \grad f(x_{\bar{k} }) \| < \frac{1}{1-\eta}\left( \frac{\|J_{\bar{k} }\|^2 + \mu \|F (x_{\bar{k} })\|^2}{\mu \| F(x_{\bar{k}}) \|^2} \right) \frac{e_\mu}{\|s_{\bar{k} }(\mu)\|}. 
        \notag
    \end{eqnarray}
 \tred{By driving $ \mu \rightarrow \infty$ in the above,}  the right-hand side
 converges to $0$ \tred{since 
 \eqref{eq:0522-2}} and $ e_\mu  = o \left( \| s_{\bar{k} }(\mu) \| \right)$ hold. This contradicts the assumption $\grad f(x_{\bar{k} }) \neq 0_{x_{\bar{k} }} $. As a result, we conclude that $\| \text{grad}f(x_{\bar{k}}) \| = 0$. 
    In this case \tred{(ii)}, \tred{any iteration points never vary} after the $\bar{k}$-th iteration and hence $\lim_{k \rightarrow \infty} \| \text{grad}f(x_k) \| =\| \text{grad}f(x_{\bar{k}}) \| =0$ holds.
    
    \tred{The whole proof is complete.}
    \hfill$\Box$ 
    \end{proof}
    
    \tred{In view of the proof of Theorem \ref{ global lim inf} for the cases (i)-a and (ii),
    we have the following corollary:}	 
    \begin{corollary} \label{coro}
        If $\{\mu_k\} $ is bounded and Assumption \ref{assume global 1} holds, then $$ \underset{k \rightarrow \infty}{\lim} \| \grad f(x_k) \| = 0.$$
    \end{corollary}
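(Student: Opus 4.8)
The plan is to reuse the case analysis already carried out in the proof of Theorem~\ref{ global lim inf}, specialized to the extra hypothesis that $\{\mu_k\}$ is bounded, which excludes case (i-b) entirely. First I would recall that that proof splits into (i) $|\mathcal{S}|=\infty$ and (ii) $|\mathcal{S}|<\infty$, and that the full limit $\lim_{j\rightarrow\infty}\|\grad f(x_{k(j)})\|=0$ along the successful subsequence was obtained precisely in subcase (i-a), where $\{\mu_k\}$ was assumed bounded (in subcase (i-b) only the weaker $\liminf$ in \eqref{eq:0522-1} was available). Since the corollary posits that $\{\mu_k\}$ is bounded, case (i) reduces exactly to (i-a), and we already have convergence to $0$ of $\|\grad f\|$ along $\{x_{k(j)}\}_{j}$.

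The remaining work is to upgrade this limit along the successful subsequence $\{x_{k(j)}\}$ to a limit along the whole sequence $\{x_k\}$. This is where relation \eqref{eq:0516-1} is the decisive ingredient: between two consecutive successful indices the iterate is constant and equal to the later successful iterate. Concretely, I would argue that every index $k$ lies in a block $k(j)<k\le k(j+1)$ (with the convention that $x_\ell=x_0=x_{k(0)}$ for $\ell\le k(0)$, since all iterations before the first successful one are unsuccessful), so that $x_k=x_{k(j+1)}$, and that $j\rightarrow\infty$ as $k\rightarrow\infty$ because $|\mathcal{S}|=\infty$. Consequently $\|\grad f(x_k)\|=\|\grad f(x_{k(j+1)})\|$ runs through the tail of $\{\|\grad f(x_{k(j)})\|\}_j$, which tends to $0$; this yields $\lim_{k\rightarrow\infty}\|\grad f(x_k)\|=0$ in case (i).

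For case (ii) no new argument is needed: the proof of Theorem~\ref{ global lim inf} already establishes $\grad f(x_{\bar{k}})=0_{x_{\bar{k}}}$ and that the iterates are frozen after the last successful iteration $\bar{k}$, so $\lim_{k\rightarrow\infty}\|\grad f(x_k)\|=\|\grad f(x_{\bar{k}})\|=0$ immediately. Combining cases (i) and (ii) then gives the stated conclusion.

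I expect the only subtle point, and thus the main obstacle, to be the bookkeeping in case (i): making precise that convergence of $\|\grad f\|$ along the successful subsequence transfers to the full sequence. This hinges entirely on the constancy of iterates recorded in \eqref{eq:0516-1}, and once that is invoked carefully the rest is a direct citation of the arguments already in place for Theorem~\ref{ global lim inf}.
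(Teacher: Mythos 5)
Your proposal is correct and follows exactly the route the paper intends: the paper's own ``proof'' is just the remark that the corollary follows from cases (i)-a and (ii) of the proof of Theorem~\ref{ global lim inf}, and your argument fills in precisely those details, including the transfer of the limit from the successful subsequence to the full sequence via \eqref{eq:0516-1}. No gaps.
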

    As we will show later, if $\grad f $ is Lipschitz continuous and \eqref{P} is nonzero-residual, then, $\{\mu_k\} $ is ensured to be bounded. \\

    \subsection{Iteration complexity}\label{iteration complexity analysis}
    Next, we analyze the iteration complexity of RLM. For this purpose, we require the Lipschitz continuity of $ \text{grad}f$ as in the Euclidean setting. 
    
       \begin{assume}\label{assume global 3} 
        $ \grad f$ is $ L$-Lipschitz continuous on $ \mathcal{L}(x_0)$ where $\mathcal{L}(x_0)$ is defined as in Assumption \ref{assume global 1}. 
    \end{assume}
    Under this assumption, the following useful lemma holds, where 
  the second-order retraction defined below plays an important role.
  \begin{definition}
    A retraction $R$ is a second-order retraction if and only if for all $ (x,v) \in \TM$, the smooth curve $c(t)$ defined as $ c(t) := R_x (tv)$ has zero acceleration at $t=0$, i.e., $ c^{\prime \prime}(0) = 0$.
\end{definition}  
  For instance, a map known as {exponential map} is a second-order retraction.
    \begin{lemma}\label{descent lemma}
\tred{Suppose that the retraction $R$ is second-order
and Assumption\,\ref{assume global 3} holds.
Then, 
for any $(x,s)$ such that $(x,R_x(s))\in \mathcal{L}(x_0)\times \mathcal{L}(x_0)$,
we have}
\begin{equation}
            f(R_x(s)) \leq f(x)+\langle \grad f(x),s \rangle + \frac{L}{2}\|s\|^2. \label{inequ of Lemma4}
        \end{equation}
    \end{lemma}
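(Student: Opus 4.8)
The plan is to reduce the inequality to a one-dimensional statement along the retraction curve and then mimic the classical Euclidean descent lemma. First I would fix $(x,s)$ with $(x,R_x(s))\in\mathcal{L}(x_0)\times\mathcal{L}(x_0)$ and define the curve $c(t):=R_x(ts)$ together with the scalar function $g(t):=f(c(t))$ on $[0,1]$. The retraction axioms in Definition~\ref{retraction defi} give $c(0)=x$ and $c'(0)=\diffe R_x(0_x)[s]=s$, so that $g(0)=f(x)$, $g(1)=f(R_x(s))$, and $g'(0)=\langle\grad f(x),s\rangle$. Since $R$ is a second-order retraction, the curve additionally has zero covariant acceleration at the origin, i.e. $c''(0)=0$. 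With this notation, the claim \eqref{inequ of Lemma4} is equivalent to $g(1)-g(0)-g'(0)\le\frac{L}{2}\|s\|^2$.

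Next I would apply the fundamental theorem of calculus in the form
\begin{equation*}
g(1)-g(0)-g'(0)=\int_0^1\bigl(g'(t)-g'(0)\bigr)\,\diffe t ,
\end{equation*}
and compute $g'(t)=\langle\grad f(c(t)),c'(t)\rangle_{c(t)}$. To compare inner products living in different tangent spaces, I would introduce the parallel transport $P_t\colon T_x\mani\to T_{c(t)}\mani$ along $c$, which is a linear isometry, and rewrite
\begin{equation*}
g'(t)-g'(0)=\bigl\langle P_t^{-1}\grad f(c(t))-\grad f(x),\,P_t^{-1}c'(t)\bigr\rangle_x+\bigl\langle\grad f(x),\,P_t^{-1}c'(t)-s\bigr\rangle_x .
\end{equation*}
The first inner product is controlled by Cauchy--Schwarz together with Assumption~\ref{assume global 3}: the Lipschitz continuity of $\grad f$ bounds $\|P_t^{-1}\grad f(c(t))-\grad f(x)\|_x$ by $L$ times the arc length of $c$ up to $t$, which is of order $t\|s\|$, while $\|P_t^{-1}c'(t)\|_x=\|c'(t)\|$ remains of order $\|s\|$. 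Integrating this leading contribution over $[0,1]$ produces exactly the target $\frac{L}{2}\|s\|^2$.

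The main obstacle is the second inner product $\langle\grad f(x),\,P_t^{-1}c'(t)-s\rangle_x$, which has no counterpart in the Euclidean proof and is where the geometry of a general, non-geodesic retraction intervenes: for a geodesic the velocity $c'(t)$ is parallel and this term vanishes identically, whereas for a generic retraction it need not. This is precisely where the second-order property is essential — because $c''(0)=0$, the transported velocity obeys $\frac{\diffe}{\diffe t}\bigl(P_t^{-1}c'(t)\bigr)\big|_{t=0}=0$, so $P_t^{-1}c'(t)-s$ vanishes to first order at $t=0$ and its contribution can be absorbed into the $O(t\|s\|^2)$ estimate rather than spoiling the bound. I expect the delicate point to be verifying the arc-length and velocity estimates uniformly on all of $[0,1]$, not merely near $t=0$; this is most cleanly dispatched by invoking the precise form of the Lipschitz assumption and the smoothness of $R$ on the sublevel set, after which assembling the two bounds yields $g'(t)-g'(0)\le Lt\|s\|^2$ and hence \eqref{inequ of Lemma4}.
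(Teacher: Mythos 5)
Your reduction to the scalar function $g(t)=f(R_x(ts))$ and the parallel-transport decomposition of $g'(t)-g'(0)$ is exactly the classical argument for the exponential map, and for $R=\exp$ it closes: there $c'(t)=P_t s$, your second inner product vanishes identically, $\|c'(t)\|=\|s\|$, and the arc length up to $t$ is exactly $t\|s\|$. (The paper does not give this argument; it disposes of the lemma by citing \cite[Exercise 10.56]{Boumal}.) The trouble is that for a general second-order retraction neither of the two estimates you rely on follows from the stated hypotheses. The second-order property is a condition at $t=0$ only: it gives $\frac{\diffe}{\diffe t}\bigl(P_t^{-1}c'(t)\bigr)\big|_{t=0}=0$ for the curve emanating from $x$, but says nothing about the covariant acceleration of $c$ at $t\in(0,1]$, because the acceleration of $c$ at a time $t_0>0$ is not the initial acceleration of any retraction curve restarted at $c(t_0)$. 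Consequently $P_t^{-1}c'(t)-s$ is $o(t)$ near $t=0$ but is not controlled on all of $[0,1]$; likewise the bounds $\|c'(t)\|\leq\|s\|$ and $\ell(t):=\int_0^t\|c'(\tau)\|\,\diffe\tau\leq t\|s\|$, which you need for the first term to integrate to exactly $\frac{L}{2}\|s\|^2$, hold with equality for geodesics but fail for general retractions.

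Even if you import enough smoothness and compactness to bound the higher covariant derivatives of $t\mapsto R_x(ts)$ uniformly (the hypotheses do not obviously supply this, since $\mathcal{L}(x_0)$ need not be compact), the second inner product contributes an error of order $\|\grad f(x)\|\,\|s\|^3$ and the first one an extra term of order $L\|s\|^3$. These are of a different order in $\|s\|$ and involve $\|\grad f(x)\|$ rather than $L$, so they cannot be ``absorbed into the $O(t\|s\|^2)$ estimate'' as you assert; at best you would obtain the inequality with a larger, retraction-dependent constant in place of $L/2$, and the claimed bound $g'(t)-g'(0)\leq Lt\|s\|^2$ on all of $[0,1]$ does not follow. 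The missing ingredient is some genuinely global control of the retraction curves --- e.g.\ passing to the pullback $f\circ R_x$ on the tangent space and establishing a Lipschitz property for its (Euclidean) gradient along the segment $[0,s]$, which is how the cited exercise is framed --- rather than the pointwise condition $c''(0)=0$. As written, your sketch proves the lemma only for the exponential retraction.
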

\begin{proof}
    \tred{The proof follows from \cite[Exercise 10.56]{Boumal} easily.}
\hfill$\Box$
\end{proof}    
    To use Lemma \ref{descent lemma}, we restrict retractions to second-order ones throughout the analysis of iteration complexity. 
    
    For any positive number $\epsilon $, we define some notations as below:
     \begin{align*}   
        &j_{\epsilon}:=\min\{j\mid \| \text{grad}f(x_j) \| < \epsilon\mbox{ or }
        f(x)<\epsilon\},\\
        &\mathcal{S}_{\epsilon} := \{0,1,\dots,j_{\epsilon}-1\} \cap \mathcal{S},\\
        &\mathcal{U}_{\epsilon} := \{0,1,\dots,j_{\epsilon}-1\} \setminus \mathcal{S}_{\epsilon}.
     \end{align*}
    Our objective is to evaluate the worst-case iteration number which is needed to reach a point $x$ such that $ \| \text{grad}f(x) \| < \epsilon$ or $f(x) <\epsilon $ hold.     
\tred{
In other words, we wish to evaluate $j_{\epsilon}$ $(=|\mathcal{S}_{\epsilon}|+|\mathcal{U}_{\epsilon}|)$ in the worst case. The analysis will be conducted by tracing the following three steps one by one:  
}
   \begin{enumerate}
        \item[(1)]  We give a sufficient condition for the $k$-th iteration to be successful.
        \item[(2)] We derive an upper bound of $ | \mathcal{S}_{\epsilon} |$. 
        \item[(3)] 
        \tred{We evaluate the maximum number of unsuccessful iterations 
        occurring consecutively, and then give an upper bound of $ | \mathcal{U}_{\epsilon} |$.}
        \end{enumerate}
Hereinafter, we denote 
\begin{eqnarray}
        \kappa := \frac{\frac{L}{2}+\sqrt{\frac{L^2}{4}+2(1-\eta)LM^2}}{2(1-\eta)}. \label{defi of kappa}
\end{eqnarray}
    For the above step (1), we introduce the following lemma.
    \begin{lemma}\label{lambda bounded}
        \Add{Under Assumptions~\ref{assume global 1} and~\ref{assume global 3}}, if $ \lambda_k \geq \tred{\kappa} $, then the $k$-th iteration is successful.
    \end{lemma}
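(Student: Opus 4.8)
The plan is to show directly that the threshold $\lambda_k\ge\kappa$ forces $\rho_k\ge\eta$, that is, $f(x_k)-f(R_{x_k}(s_k))\ge\frac{\eta}{2}\bigl(\theta^k(0)-\theta^k(s_k)\bigr)$. If $\grad f(x_k)=0_{x_k}$ then $s_k=0_{x_k}$ and the point is already stationary, so I may assume $\grad f(x_k)\neq 0_{x_k}$, whence $s_k\neq 0_{x_k}$ and the denominator of $\rho_k$ is positive. First I would rewrite the model decrease via \eqref{theta_k_s} as
\[
\theta^k(0)-\theta^k(s_k)=-2\langle\grad f(x_k),s_k\rangle-\|J_ks_k\|^2-\lambda_k\|s_k\|^2\le -2\langle\grad f(x_k),s_k\rangle,
\]
so that it suffices to establish the stronger inequality $f(x_k)-f(R_{x_k}(s_k))\ge-\eta\langle\grad f(x_k),s_k\rangle$.

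Next I would invoke the descent lemma (Lemma~\ref{descent lemma}), which applies since we restrict to second-order retractions and $x_k\in\mathcal{L}(x_0)$, to get
\[
f(x_k)-f(R_{x_k}(s_k))\ge -\langle\grad f(x_k),s_k\rangle-\frac{L}{2}\|s_k\|^2.
\]
Writing $D:=-\langle\grad f(x_k),s_k\rangle\ge 0$, the target inequality then reduces to $(1-\eta)D\ge\frac{L}{2}\|s_k\|^2$. To bound the two sides I would use Lemma~\ref{lemm for two inequs} together with Assumption~\ref{assume global 1}: inequality \eqref{inner prod of grad s_k} with $\|J_k\|\le M$ yields $D\ge\frac{\|\grad f(x_k)\|^2}{M^2+\lambda_k}$, while \eqref{s_k grad rela} yields $\|s_k\|^2\le\frac{\|\grad f(x_k)\|^2}{\lambda_k^2}$. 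Substituting these and cancelling the common positive factor $\|\grad f(x_k)\|^2$, it remains to verify $\frac{1-\eta}{M^2+\lambda_k}\ge\frac{L}{2\lambda_k^2}$, equivalently $2(1-\eta)\lambda_k^2-L\lambda_k-LM^2\ge 0$.

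Finally I would solve this quadratic in $\lambda_k$. Its unique positive root is $\frac{L+\sqrt{L^2+8(1-\eta)LM^2}}{4(1-\eta)}$, which is precisely $\kappa$ as defined in \eqref{defi of kappa} after clearing the factor of two inside and outside the square root; hence the quadratic inequality holds for every $\lambda_k\ge\kappa$, and the chain of reductions gives $\rho_k\ge\eta$, i.e.\ the $k$-th iteration is successful. The algebra (the quadratic solve and the matching of its root with $\kappa$) constitutes the bulk of the work but is routine. The one point demanding genuine care is the invocation of Lemma~\ref{descent lemma}, which presumes $R_{x_k}(s_k)\in\mathcal{L}(x_0)$; I expect this to be the main thing needing justification, e.g.\ by observing that $\|s_k\|\to 0$ as $\lambda_k$ grows (by \eqref{s_k grad rela}), so that $R_{x_k}(s_k)$ remains near $x_k\in\mathcal{L}(x_0)$ and stays inside the sublevel set once $\lambda_k$ is large enough.
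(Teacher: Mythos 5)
Your proof is correct and follows essentially the same route as the paper: lower-bound the actual decrease via the second-order-retraction descent lemma, upper-bound the model decrease by $-2\langle\grad f(x_k),s_k\rangle$, substitute the two estimates of Lemma~\ref{lemm for two inequs} with $\|J_k\|\le M$, and solve the resulting quadratic in $\lambda_k$, whose positive root is exactly $\kappa$. The caveat you raise about verifying $R_{x_k}(s_k)\in\mathcal{L}(x_0)$ before invoking Lemma~\ref{descent lemma} is a real subtlety, but the paper's own proof applies the lemma without addressing it either, so it is a shared imprecision rather than a gap in your argument.
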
   
    \begin{proof}
        We have
    \begin{alignat}{2}
     & \; \; \; \; f(x_k) - f(R_{x_k} (s_k)) - \frac{\eta}{2}(\theta^k(0)-\theta^k(s_k)) \notag \\
     & \geq  - \langle \text{grad}f(x_k),s_k \rangle -\frac{L}{2}\|s_k\|^2 - \frac{\eta}{2}(\theta^k(0)-\theta^k(s_k)) \notag
     &\quad& (\text{by } \eqref{inequ of Lemma4} \text{ in Lemma~\ref{descent lemma}})\\
     & \geq  - \langle \text{grad}f(x_k),s_k \rangle -\frac{L}{2}\|s_k\|^2 + \eta \; \langle \text{grad}f(x_k),s_k \rangle \notag 
     &\quad&  (\text{in a manner similar to } \eqref{diffe of theta}) \\
     & \geq  (1-\eta) \langle \text{grad}f(x_k), -s_k \rangle - \frac{L}{2} \|s_k \|^2 \notag \\
     & \geq  (1-\eta)\frac{\| \text{grad}f(x_k) \|^2}{ \|J_k \|^2 + \lambda_k} -\frac{L}{2 \lambda_k^2}\| \text{grad}f(x_k) \|^2 \notag 
     &\quad& (\text{by } \eqref{s_k grad rela} \text{ and } \eqref{inner prod of grad s_k})  \\
     & \geq  (1-\eta)\frac{\| \text{grad}f(x_k) \|^2}{M^2 + \lambda_k} -\frac{L}{2 \lambda_k^2}\| \text{grad}f(x_k) \|^2 \notag
     &\quad&  ( \text{by Assumption~\ref{assume global 1}})  \\
     &= \left( \frac{1-\eta}{M^2+\lambda_k} - \frac{L}{2\lambda_k^2}\right) \| \text{grad}f(x_k) \|^2. \notag
    \end{alignat}
    Therefore, if $ \frac{1-\eta}{M^2+\lambda_k} - \frac{L}{2\lambda_k^2} \geq 0 $ holds, then the $k$-th iteration is successful.
    Lastly, since 
     $\frac{1-\eta}{M^2+\lambda_k} - \frac{L}{2\lambda_k^2} \geq 0$ 
     is equivalent to 
     $(1-\eta)\lambda_k^2 -\frac{L}{2}\lambda_k - \frac{L M^2}{2} \geq 0$,
    we conclude that if $\lambda_k \geq \frac{\frac{L}{2}+\sqrt{\frac{L^2}{4}+2(1-\eta)LM^2}}{2(1-\eta)}\tred{=\kappa}$, then the $k $-th iteration is successful.
    \hfill $ \Box$
    \end{proof}
    
    \tred{Recall that the parameter $\beta>1$ is set in Algorithm~\ref{alg1}.}
 Using Lemma\,\ref{lambda bounded}, we can show that $\{\mu_k\}_{0 \leq k \leq j_{\epsilon} } $ is bounded by 
   $$\mu_{\max}(\epsilon) := \frac{\beta \kappa}{\text{\Add{$2$}}\epsilon},$$
 namely, it holds that 
        \begin{equation}
        \max_{0\leq k\leq j_{\epsilon}}\mu_k\le \mu_{\max}(\epsilon). \label{eq:0522-3}
        \end{equation}
\tred{Indeed}, $\lambda_k (= \mu_k \| F(x_k) \|^2)$ is bounded by $\beta \kappa $ because of Lemma\,\ref{lambda bounded} and thus,
    \begin{eqnarray*}
        \mu_k < \frac{\beta \kappa}{\| F(x_k) \|^2} \leq \frac{\beta \kappa}{\text{\Add{$2$}}\epsilon}=\tred{\mu_{\max}(\epsilon)},
    \end{eqnarray*}
    where the second inequality follows from $ \tred{f(x_k)=\frac{1}{2}}\| F(x_k) \|^2 \geq \epsilon$ for all $k=0,\dots,j_{\epsilon}-1$. 
    \begin{remark}\label{boundMu}
When Assumptions~\ref{assume global 1} and \ref{assume global 3} hold and the global optimal value of \eqref{P} is positive, then $ \{\mu_k\}$ is bounded by $ \frac{\beta \kappa}{2 \underset{x \in \mani} \min f(x)}$.
    \end{remark}
 
As the next step~(2), we give an upper-bound of $ |\mathcal{S}_{\epsilon}|$ specifically in the following lemma.
    \begin{lemma}
        Under Assumptions~\ref{assume global 1} and~\ref{assume global 3},
    $
     |\mathcal{S}_{\epsilon}| \leq 2 f(x_0)\frac{M^2+\mu_{\max}(\epsilon)\|F(x_0)\|^2}{\eta} \epsilon^{-2}
    $
    holds.
    \end{lemma}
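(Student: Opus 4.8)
The plan is to convert the sufficient decrease enjoyed at each successful iteration into a per-step drop of at least order $\epsilon^2$, and then accumulate these drops by telescoping to bound $|\mathcal{S}_{\epsilon}|$ against $f(x_0)$. First I would record the decrease at a successful step: for $k\in\mathcal{S}$ we have $x_{k+1}=R_{x_k}(s_k)$ with $\rho_k\ge\eta$, so the definition \eqref{ def of rho_k} of $\rho_k$ together with Lemma~\ref{Cauchy step} (i.e. \eqref{inequ from Cauchy step}) gives
\[
f(x_k)-f(x_{k+1})\ \ge\ \frac{\eta}{2}\bigl(\theta^k(0)-\theta^k(s_k)\bigr)\ \ge\ \frac{\eta}{2}\,\frac{\|\grad f(x_k)\|^2}{\|J_k\|^2+\lambda_k}.
\]

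Next I would uniformly bound the denominator over the relevant range of indices. By Assumption~\ref{assume global 1} we have $\|J_k\|\le M$; moreover $\lambda_k=\mu_k\|F(x_k)\|^2$ with $\mu_k\le\mu_{\max}(\epsilon)$ for $0\le k\le j_{\epsilon}$ by \eqref{eq:0522-3}, and $\|F(x_k)\|\le\|F(x_0)\|$ because $\{f(x_k)\}$ is monotonically non-increasing (as established in the proof of Theorem~\ref{ global lim inf}). Hence $\|J_k\|^2+\lambda_k\le M^2+\mu_{\max}(\epsilon)\|F(x_0)\|^2$. In addition, every $k\in\mathcal{S}_{\epsilon}$ satisfies $k<j_{\epsilon}$, so by the definition of $j_{\epsilon}$ neither stopping condition has yet triggered and in particular $\|\grad f(x_k)\|\ge\epsilon$. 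Combining these facts yields, for all $k\in\mathcal{S}_{\epsilon}$,
\[
f(x_k)-f(x_{k+1})\ \ge\ \frac{\eta}{2}\,\frac{\epsilon^2}{M^2+\mu_{\max}(\epsilon)\|F(x_0)\|^2}.
\]

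Finally I would telescope over $k=0,1,\dots,j_{\epsilon}-1$. Each unsuccessful iteration has $x_{k+1}=x_k$ and therefore contributes $f(x_k)-f(x_{k+1})=0$, so the full telescoping sum collapses onto the successful indices: $\sum_{k=0}^{j_{\epsilon}-1}\bigl(f(x_k)-f(x_{k+1})\bigr)=\sum_{k\in\mathcal{S}_{\epsilon}}\bigl(f(x_k)-f(x_{k+1})\bigr)$. Since $f(x_{j_{\epsilon}})\ge 0$, the left side is at most $f(x_0)$, whence
\[
f(x_0)\ \ge\ |\mathcal{S}_{\epsilon}|\cdot\frac{\eta}{2}\,\frac{\epsilon^2}{M^2+\mu_{\max}(\epsilon)\|F(x_0)\|^2},
\]
and rearranging gives exactly $|\mathcal{S}_{\epsilon}|\le 2f(x_0)\,\dfrac{M^2+\mu_{\max}(\epsilon)\|F(x_0)\|^2}{\eta}\,\epsilon^{-2}$.

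There is no genuinely hard step here; the argument is a standard ``monotone decrease plus telescoping'' bound. The only points requiring care are the two bookkeeping facts that make the telescoping clean, namely that unsuccessful iterations leave $f$ unchanged (so the sum reduces to one over $\mathcal{S}_{\epsilon}$) and that the uniform bound $\mu_k\le\mu_{\max}(\epsilon)$ of \eqref{eq:0522-3} is valid precisely on the index range $0\le k\le j_{\epsilon}$ used in the telescope. Both are already available, so I expect the proof to be essentially immediate.
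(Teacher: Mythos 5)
Your proposal is correct and follows essentially the same route as the paper: per-successful-iteration decrease from $\rho_k\ge\eta$ and Lemma~\ref{Cauchy step}, the uniform bound $\|J_k\|^2+\lambda_k\le M^2+\mu_{\max}(\epsilon)\|F(x_0)\|^2$, the lower bound $\|\grad f(x_k)\|\ge\epsilon$ for $k<j_\epsilon$, and telescoping against $f(x_0)$. The only cosmetic difference is that the paper sums directly over the successful indices $k(j)$ whereas you telescope over all $k<j_\epsilon$ and observe that unsuccessful iterations contribute zero; these are the same argument.
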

    
    \begin{proof} 
        First, for all $ j \in \{0,1,\dots,|\mathcal{S}_{\epsilon}|-1\}$, \tred{the inequality\,\eqref{ f is decreasing} is obtained in a similar manner to 
        Theorem\,\ref{ global lim inf}.}
        \tred{Then, } 
        $\mu_k \|F(x_k) \|^2 \leq \mu_{\max}(\epsilon) \|F(x_0) \|^2 $ holds 
        and thus, we have 
    \begin{eqnarray*}
     f(x_{k(j)})-f(x_{k(j+1)}) \geq \frac{\eta}{2} \frac{1}{M^2 + \mu_{\max}(\epsilon)\|F(x_0)\|^2} \| \text{grad}f(x_{k(j)}) \|^2.
    \end{eqnarray*}
    By summing up the above inequality from $j=0 $ to $|\mathcal{S}_{\epsilon}|-1$
    \tred{and noting $f(x_{k(| \mathcal{S}_{\epsilon}|)})\ge 0$}, we obtain
    \begin{eqnarray*}
     f(x_0) &=& f(x_{k(0)}) \\
          & \geq & \frac{\eta}{2 \left( M^2 + \mu_{\max}(\epsilon)\|F(x_0)\|^2 \right)} \sum_{j=0}^{|\mathcal{S}_{\epsilon}|-1}\| \text{grad}f(x_{k(j)}) \|^2 +f(x_{k(| \mathcal{S}_{\epsilon}|)})\\
     & \geq & \frac{\eta}{2 \left( M^2 + \mu_{\max}(\epsilon)\|F(x_0)\|^2 \right)} \sum_{j=0}^{|\mathcal{S}_{\epsilon}|-1}\| \text{grad}f(x_{k(j)}) \|^2 \\
     &\geq& \frac{\eta \epsilon^2}{ 2 \left( M^2 + \mu_{\max}(\epsilon)\|F(x_0)\|^2 \right) } |\mathcal{S}_{\epsilon}|,
    \end{eqnarray*}
    \tred{where 
    the last inequality follows from the assumption
    $\| \grad f(x_{k(j)})\| \geq \epsilon$ for $j=0,1,\ldots,|\mathcal{S}_{\epsilon}|-1$.}
    Consequently, we ensure 
    \begin{eqnarray*}
    |\mathcal{S}_{\epsilon}| \leq 2 f(x_0)\frac{M^2+\mu_{\max}(\epsilon)\|F(x_0)\|^2}{\eta} \epsilon^{-2} .
    \end{eqnarray*}
    \hfill$\Box$
    \end{proof}
    As the final step~(3), we prove the following lemma.
    \begin{lemma}
        \Add{Suppose that Assumptions~\ref{assume global 1} and~\ref{assume global 3} hold. Then,
    $
     | \mathcal{U}_\epsilon| \leq c_{\max}(\epsilon)|\mathcal{S}_{\epsilon}|
    $}
    holds where $ c_{\max}(\epsilon) := \lceil \log_{\beta}{\left(\frac{\kappa}{\text{\Add{$2$}}\mu_{\min}}\epsilon^{-1}\right)}\rceil $. \Add{Here, $ \mu_{\min}$ is the constant prefixed in Algorithm~\ref{alg1} and $\lceil\cdot\rceil$ is the ceiling function.}\\
    \end{lemma}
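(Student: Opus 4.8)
The plan is to bound the length of every maximal block of consecutive unsuccessful iterations by $c_{\max}(\epsilon)$, and then to distribute these blocks among the successful iterations so that each iteration of $\mathcal{S}_{\epsilon}$ is charged with at most one block, yielding $|\mathcal{U}_{\epsilon}|\le c_{\max}(\epsilon)|\mathcal{S}_{\epsilon}|$.

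First I would establish the key structural fact that any maximal run of consecutive unsuccessful iterations occurring within $\{0,1,\dots,j_{\epsilon}-1\}$ has length at most $c_{\max}(\epsilon)$. Whenever such a run begins, the parameter satisfies $\mu_k\ge \mu_{\min}$: this holds at $k=0$ since $\mu_0=\mu_{\min}$, and it holds at the first index of any later run because the immediately preceding iteration is successful, whence $\mu_k=\max(\mu_{\min},\bar{\mu}/\beta)\ge\mu_{\min}$. Along the run the iterate is frozen, so $\|F(x_k)\|$ stays constant, while $\mu_k$ is multiplied by $\beta$ at each unsuccessful step; thus after $i$ unsuccessful steps its value equals $\beta^i$ times the initial one. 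Now for every index $k<j_{\epsilon}$ we have $\|F(x_k)\|^2=2f(x_k)\ge 2\epsilon$, so $\lambda_k=\mu_k\|F(x_k)\|^2\ge 2\epsilon\,\mu_k$; by the contrapositive of Lemma~\ref{lambda bounded}, an unsuccessful iteration must satisfy $\lambda_k<\kappa$, and therefore $\mu_k<\kappa/(2\epsilon)$. Applying this to the final iteration of a run of length $p$, whose parameter is at least $\beta^{p-1}\mu_{\min}$, gives $\beta^{p-1}\mu_{\min}<\kappa/(2\epsilon)$, i.e. $p-1<\log_{\beta}(\kappa/(2\mu_{\min}\epsilon))$, which yields $p\le \lceil \log_{\beta}(\tfrac{\kappa}{2\mu_{\min}}\epsilon^{-1})\rceil=c_{\max}(\epsilon)$.

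Second I would observe that, whenever $j_{\epsilon}\ge 1$, the index $j_{\epsilon}-1$ is necessarily a successful iteration: if it were unsuccessful then $x_{j_{\epsilon}}=x_{j_{\epsilon}-1}$, so the stopping criterion would already hold at $x_{j_{\epsilon}-1}$, contradicting the minimality of $j_{\epsilon}$. This removes any trailing block of unsuccessful iterations. Writing $\mathcal{S}_{\epsilon}=\{k(0)<k(1)<\dots<k(N-1)\}$ with $N=|\mathcal{S}_{\epsilon}|$ and $k(N-1)=j_{\epsilon}-1$, I would set $U_0:=\{k\mid 0\le k<k(0)\}$ and $U_i:=\{k\mid k(i-1)<k<k(i)\}$ for $1\le i\le N-1$. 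Each $U_i$ is a maximal unsuccessful run, so $|U_i|\le c_{\max}(\epsilon)$ by the first step, and the sets $U_0,\dots,U_{N-1}$ partition $\mathcal{U}_{\epsilon}$ precisely because the range $\{0,\dots,j_{\epsilon}-1\}$ terminates with the successful index $k(N-1)$. Summing gives $|\mathcal{U}_{\epsilon}|=\sum_{i=0}^{N-1}|U_i|\le N\,c_{\max}(\epsilon)=c_{\max}(\epsilon)|\mathcal{S}_{\epsilon}|$.

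The main obstacle I anticipate is pinning down the ceiling bound on the run length exactly (not merely up to an additive constant) together with the off-by-one in the charging argument. The decisive point that forces the constant to be precisely $c_{\max}(\epsilon)|\mathcal{S}_{\epsilon}|$, with no extra additive term, is the observation in the second step that the last pre-termination iteration is successful, so every unsuccessful block can be injectively charged to the successful iteration immediately following it. The degenerate case $j_{\epsilon}=0$ is trivial since then both sides vanish.
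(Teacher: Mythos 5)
Your proof is correct and follows essentially the same route as the paper: bound each maximal run of consecutive unsuccessful iterations by $c_{\max}(\epsilon)$ via the contrapositive of Lemma~\ref{lambda bounded} (using $\|F(x_k)\|^2\ge 2\epsilon$ for $k<j_{\epsilon}$ and the geometric growth of $\mu_k$ from a base of at least $\mu_{\min}$), then charge each run to the successful iteration that follows it. Your explicit observation that $j_{\epsilon}-1$ must be successful, which rules out a trailing unsuccessful block and hence an extra additive term, is a point the paper's counting argument passes over silently, but the underlying argument is the same.
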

    
    \begin{proof} 
\tred{
Recall $\beta>1$.
Since
$\| F(x_k) \|^2 \geq \text{\Add{2}}\epsilon$ holds for an arbitrarily chosen $k < j_{\epsilon}$, we have}
    \begin{eqnarray*}
     \tred{\beta^{c_{\max}(\epsilon)}\mu_{\min} \|F(x_k) \|^2\geq} \beta^{\log_{\beta}{\left(\frac{\kappa}{\text{\Add{$2$}}\mu_{\min}}\epsilon^{-1}\right)}} \mu_{\min} \|F(x_k) \|^2 = \kappa \frac{\|F(x_k) \|^2}{\text{\Add{$2$}}\epsilon} \geq \kappa.
    \end{eqnarray*}
\tred{Hence, 
if the $k$-th iteration is right after consecutive $c_{\max}(\epsilon)$ unsuccessful iterations, the assumptions of Lemma~\ref{lambda bounded} are fulfilled because
$\lambda_k\ge \beta^{c_{\max}(\epsilon)}\mu_{\min}$.
Therefore, the $k$-th iteration is successful.  
This implies that the number of consecutive unsuccessful iterations is at most $ c_{\max}(\epsilon)$.}
    

Now we can upper-bound $|\mathcal{U}_{\epsilon} | $ by $ c_{\max}(\epsilon)|\mathcal{S}_{\epsilon}| $, because 
there occur alternately at most
$c_{\max}(\epsilon) $ unsuccessful iterations and one successful iteration  until 
\tred{the number of iterations} reaches $ j_{\epsilon}$.
    
    \hfill$\Box$
    \end{proof}
    Finally, we obtain the following result about the iteration complexity of Algorithm \ref{alg1}.
    \begin{theorem}\label{iter comp} 
        Under Assumptions \ref{assume global 1} and \ref{assume global 3}, the iteration complexity of Algorithm \ref{alg1} to find a solution satisfying $\| \grad f(x) \| < \epsilon $ or \Add{$ f(x)<\epsilon$} is bounded by $O\left( \log{(\epsilon^{-1})}\epsilon^{-3}\right) $.
    \end{theorem}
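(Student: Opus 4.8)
The plan is simply to combine the three bounds established in steps~(1)--(3) by an order-of-magnitude calculation, since by definition the total iteration count is $j_{\epsilon}=|\mathcal{S}_{\epsilon}|+|\mathcal{U}_{\epsilon}|$. First I would invoke the step~(3) lemma, $|\mathcal{U}_{\epsilon}|\le c_{\max}(\epsilon)|\mathcal{S}_{\epsilon}|$, to reduce everything to the number of successful iterations:
\begin{equation*}
j_{\epsilon}=|\mathcal{S}_{\epsilon}|+|\mathcal{U}_{\epsilon}|\le \left(1+c_{\max}(\epsilon)\right)|\mathcal{S}_{\epsilon}|.
\end{equation*}
Thus the task splits into estimating the logarithmic prefactor $1+c_{\max}(\epsilon)$ and the polynomial factor $|\mathcal{S}_{\epsilon}|$ separately.

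Next I would track the $\epsilon$-dependence of each factor. For the prefactor, since $c_{\max}(\epsilon)=\lceil \log_{\beta}(\frac{\kappa}{2\mu_{\min}}\epsilon^{-1})\rceil$ and $\beta,\kappa,\mu_{\min}$ are all fixed constants, we immediately get $1+c_{\max}(\epsilon)=O(\log(\epsilon^{-1}))$. For the number of successful iterations, the step~(2) lemma provides
\begin{equation*}
|\mathcal{S}_{\epsilon}|\le 2f(x_0)\frac{M^2+\mu_{\max}(\epsilon)\|F(x_0)\|^2}{\eta}\,\epsilon^{-2}.
\end{equation*}
The crucial observation is that the bound $\mu_{\max}(\epsilon)=\frac{\beta\kappa}{2\epsilon}$ from \eqref{eq:0522-3} is itself $O(\epsilon^{-1})$, so the numerator $M^2+\mu_{\max}(\epsilon)\|F(x_0)\|^2$ grows like $O(\epsilon^{-1})$, and hence $|\mathcal{S}_{\epsilon}|=O(\epsilon^{-1})\cdot\epsilon^{-2}=O(\epsilon^{-3})$.

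Combining the two estimates then gives
\begin{equation*}
j_{\epsilon}\le \left(1+c_{\max}(\epsilon)\right)|\mathcal{S}_{\epsilon}|=O\!\left(\log(\epsilon^{-1})\right)\cdot O\!\left(\epsilon^{-3}\right)=O\!\left(\log(\epsilon^{-1})\epsilon^{-3}\right),
\end{equation*}
which is exactly the asserted complexity.

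There is no real obstacle remaining at this last step: the substantive work has already been carried out in Lemma~\ref{lambda bounded} and in the two step lemmas. The only point requiring care is the bookkeeping of the $\epsilon$-dependence hidden inside $\mu_{\max}(\epsilon)$. It is precisely this extra $O(\epsilon^{-1})$ factor---arising from the damping rule $\lambda_k=\mu_k\|F(x_k)\|^2$ together with the stopping threshold $f(x_k)\ge\epsilon$---that upgrades the naive $\epsilon^{-2}$ count of successful iterations to $\epsilon^{-3}$. The logarithmic factor, in turn, is exactly the worst-case length of a run of consecutive unsuccessful iterations needed to drive $\lambda_k$ past the threshold $\kappa$ of Lemma~\ref{lambda bounded}.
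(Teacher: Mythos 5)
Your proposal is correct and follows exactly the same route as the paper: the paper's proof likewise chains $j_{\epsilon}=|\mathcal{S}_{\epsilon}|+|\mathcal{U}_{\epsilon}|\le(1+c_{\max}(\epsilon))|\mathcal{S}_{\epsilon}|$ with the step-(2) bound and then reads off the orders of $c_{\max}(\epsilon)$ and $\mu_{\max}(\epsilon)$. Your explicit accounting of the extra $O(\epsilon^{-1})$ hidden in $\mu_{\max}(\epsilon)$ is precisely the bookkeeping the paper leaves implicit in the phrase ``from the definitions of $\mu_{\max}(\epsilon)$ and $c_{\max}(\epsilon)$, the assertion follows.''
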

    
    \begin{proof}
        The number $j_{\epsilon} $ is bounded from above as follows:
    \begin{eqnarray}
     j_{\epsilon} &=& | \mathcal{S}_{\epsilon} | + |\mathcal{U}_{\epsilon}| \notag \\
     & \leq & (1+c_{\max}(\epsilon)) | \mathcal{S}_{\epsilon} |  \notag \\
     & \leq & (1+c_{\max}(\epsilon)) \left(2 f(x_0)\frac{M^2+\mu_{\max}(\epsilon)\|F(x_0)\|^2}{\eta} \epsilon^{-2}\right). \label{ j_e bound}
     \end{eqnarray}
    
     From \eqref{ j_e bound} and the definitions of $\mu_{\max}(\epsilon) $ and $c_{\max}(\epsilon)$, the assertion follows.
     \hfill$\Box$
    \end{proof}

\section{Analysis on local convergence} \label{chap:local}
In this section, we show the local convergence properties of Algorithm~\ref{alg1}, by dividing it into (1) zero-residual and (2) nonzero-residual cases. First, we study the local convergence behavior of the algorithm around a zero-residual stationary point, namely, $x^*$ such that $f(x^*) = 0$, implying that $x^*$ is a stationary point since it is optimal.
Second, we analyze the behavior around a solution $x^*$ which is a stationary point but $f(x^*)\neq 0$.

    \subsection{Notations for local convergence analysis}\label{sec:notations}
    \tred{We first introduce additional} notations. Let $X^*$ denote the set of stationary points with the same residual $f^*:=f(x^*)$ as $x^*$, i.e., 
    $$
    \tred{X^{\ast}:=\{x\in \mani\mid \gradf(x)=0,\ f(x) = f^*\}. }
    $$
    Given $x \in \mani$, we define the distance between $x$ and $X^*$ as 
    \begin{eqnarray*}
      \Dist(x,X^*) := & \underset{\hat{x} \in X^*}{\min} \dist(x,\hat{x}).
  \end{eqnarray*}
  Moreover, we write $ \bar{x}$ to denote a point which is the closest to $x$ in $X^*$, that is,
  \begin{eqnarray*}
    \bar{x} \in &  \underset{\hat{x} \in X^*}{\text{ arg min }} \dist(x,\hat{x}).
  \end{eqnarray*}
Hereinafter, we often use $\bxk$ defined by setting $x:=x_k$ above.
Let $B(x,b) \subset \mani$ be the ball with radius $b$ centered at $x$, i.e., $ B(x,b) := \{ y \in \mani \; | \; \dist(x,y) \leq b\}$.
Note that the Jacobian matrix $J$ is ensured to be bounded over $B(x^*,b)$ without any specific assumptions. This is due to the $C^1$ property of $F$ and the compactness of $B(x^*,b)$. Let $K$ denote the upper bound of the operator norm of $J$. Namely,
\begin{equation}  \label{upperK}
  \|J(x)\| \leq K 
\end{equation}
holds for all $x \in B(x^*,b)$.

\subsection{Basic assumptions and lemmas}
In this subsection, we give common assumptions and lemmas, which are used 
throughout the analysis for zero- and nonzero-residual cases. 

From the inverse function theorem, there exists an open set $U$ of $T_x \mathcal{M}$ containing $0_x$ such that $R_x : U \rightarrow R_x(U) \; (\subseteq \mani) $ is \tred{a diffeomorphism}. Let $R_x^{-1}:R_x(U) \rightarrow U$ be \tred{the} inverse function. 
\begin{assume}
     \label{Assumption for local zero}
        The stationary point $ x^*\in X^*$ satisfies the following conditions: 
        \begin{enumerate}
        \item[{\rm (a)}] 
    There exist $b \in (0,\infty)$ and $ c_1 \in (0,\infty)$ such that $ \| J(y)R^{-1}_y(x) - (F(x)-F(y)) \| \leq c_1 \| R^{-1}_y(x) \|^2 $ holds for all $  x,y \in B(x^*,b) $. 
    \item[{\rm (b)}] There exists $ c_2 >0 $ such that $ c_2 \| R^{-1}_x(\bar{x}) \| \leq \| F(x)-F(\bar{x}) \| $ holds for all $ x \in B(x^*,b)$.
    In particular, in the zero-residual case, i.e., $F(\bar{x})=0$, the inequality is reduced to $c_2 \| R^{-1}_x(\bar{x}) \| \leq \| F(x) \|$.
    \item[{\rm (c)}] $ \{\mu_k \}$ is upper-bounded by \tred{some positive constant, say} $\tred{\mumaxz}$.
    \end{enumerate}
    \end{assume}
The first and second assumptions are often made in the local convergence analysis for the Euclidean LM method. Indeed, they correspond to  
\cite[Assumption 2.1 (a) and (b)]{Yamashita}, respectively. In particular, the second one  is often referred to as the local error-bound condition in many articles regarding the Euclidean LM following \cite{Yamashita}. This condition is weaker than the injectiveness of the Jacobian matrix supposed in the local analysis for the RGN\,\cite{RGN}.
    
    By taking the constant $b$ in the above assumption to be sufficiently small, the following relation \eqref{eq:0605-1} is ensured under the above assumptions. 
\begin{equation}
    B(x^*,b) \subset R(B^{T_{x^*} \mani}(\injR (x^*))), \label{eq:0605-1}
    \end{equation}
where
    $ \injR (x^*)$ denotes the injectivity radius of $R$ at $x^*$ defined formally as follows: 
    \begin{definition}\label{inj of exp}
        The injectivity radius of $R$ at $x \in \mani$, denoted by $\injR (x)$, is the supremum over radii $r>0$ such that $R(x)$ is a diffeomorphism on the open ball 
        \begin{eqnarray*}
            B^{T_x \mani }(r) := \{ v \in T_{x} \mani \; | \; \|v\|_x < r \}.
        \end{eqnarray*}
    \end{definition}
Hereinafter, we additionally suppose \eqref{eq:0605-1} holds.
    
     \tred{Since} any bounded and closed set is compact in a complete Riemannian manifold, so is $B(x^*,b)$. Hence,  
 there exist the minimum and maximum eigenvalues of the matrix of Riemannian metric $\langle \cdot, \cdot \rangle$ on $ B(x^*,b)$, denoted by $ \lambda_{\min}$ and $ \lambda_{\max}$, respectively. 
%
Define the following constant $c$ in terms of $\lambda_{\min}$ and $ \lambda_{\max}$:  
    \begin{eqnarray*}
        c := \sqrt{\frac{\lambda_{\max}}{\lambda_{\min}}}\ge 1.
    \end{eqnarray*}
	    \begin{lemma} \label{lemma inv retraction}
The following holds:
    \begin{eqnarray}
    \tred{c^{-1}}\dist (x,y) \leq \| R^{-1}_x(y) \|_x \leq \tred{c}\,\dist (x,y) \; \text{ for all } x,y \in B(x^*,b). \label{dist and inv retraction}
    \end{eqnarray}
    \end{lemma}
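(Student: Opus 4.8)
The plan is to pass to a single coordinate chart covering $B(x^*,b)$ and to compare \emph{both} the Riemannian distance $\dist(x,y)$ and the tangent-vector norm $\|R^{-1}_x(y)\|_x$ against one common ``reference'' quantity, namely the Euclidean distance $d_E(x,y)$ measured in that chart (with $\tilde{\cdot}$ denoting coordinate representation, so $d_E(x,y)=\|\tilde y-\tilde x\|_E$). The leverage is that in such a chart the metric matrix satisfies $\lambda_{\min} I \preceq G(p)\preceq \lambda_{\max} I$ for every $p\in B(x^*,b)$, so for any tangent vector $w$ at any such $p$ one has the two-sided bound
\begin{equation*}
\sqrt{\lambda_{\min}}\,\|w\|_E \le \|w\|_p \le \sqrt{\lambda_{\max}}\,\|w\|_E ,
\end{equation*}
where $\|\cdot\|_E$ is the Euclidean norm of the coordinate vector. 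If both $\dist(x,y)$ and $\|R^{-1}_x(y)\|_x$ can be pinned to $d_E(x,y)$ through this sandwich, the reference quantity cancels and leaves exactly the factor $c=\sqrt{\lambda_{\max}/\lambda_{\min}}$.

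First I would treat the distance. Writing the Riemannian length of a curve as $\int_0^1\|\gamma'(t)\|_{\gamma(t)}\,\diffe t$ and applying the sandwich pointwise, every curve joining $x$ and $y$ has Riemannian length between $\sqrt{\lambda_{\min}}$ and $\sqrt{\lambda_{\max}}$ times its Euclidean length in the chart. Taking the infimum over curves, and using that for $b$ small the coordinate ball is convex so the Euclidean-minimizing segment stays admissible, yields
\begin{equation*}
\sqrt{\lambda_{\min}}\,d_E(x,y)\le \dist(x,y)\le \sqrt{\lambda_{\max}}\,d_E(x,y).
\end{equation*}

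Next I would relate $\|R^{-1}_x(y)\|_x$ to $d_E(x,y)$. Applying the sandwich to the single vector $w=R^{-1}_x(y)\in T_x\mani$ gives $\sqrt{\lambda_{\min}}\,\|R^{-1}_x(y)\|_E\le\|R^{-1}_x(y)\|_x\le\sqrt{\lambda_{\max}}\,\|R^{-1}_x(y)\|_E$, so it remains to compare $\|R^{-1}_x(y)\|_E$ with $d_E(x,y)$. Here I would invoke the defining properties $R_x(0_x)=x$ and $\diffe R_x(0_x)=\mathrm{id}$: the inverse map $R^{-1}_x$ fixes $x$ and has differential the identity there, so its coordinate Jacobian stays close to $I$ throughout $B(x^*,b)$ once $b$ is small. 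Estimating $\|R^{-1}_x(y)\|_E$ by integrating $\diffe R^{-1}_x$ along the minimizing geodesic from $x$ to $y$ then shows that $\|R^{-1}_x(y)\|_E$ and $d_E(x,y)$ agree up to a factor tending to $1$ as $b\downarrow 0$. Combining the two comparisons, the reference $d_E(x,y)$ cancels and delivers \eqref{dist and inv retraction} with $c=\sqrt{\lambda_{\max}/\lambda_{\min}}$.

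The main obstacle is precisely this last comparison. For a general (non-exponential) retraction, $R^{-1}_x$ is only \emph{approximately} an isometry between $(T_x\mani,\|\cdot\|_E)$ and the chart, so controlling $\|R^{-1}_x(y)\|_E$ by $d_E(x,y)$ with no residual retraction-dependent constant is delicate; this is exactly the difficulty flagged in the introduction, that $\|s_k\|_{x_k}\neq\dist(x_k,x_{k+1})$ in general. I expect the clean constant $c$ to emerge only after shrinking $b$ so that the first-order behaviour $\diffe R_x(0_x)=\mathrm{id}$ dominates the second-order distortion of the retraction; the curve-length estimates above, together with the metric sandwich, are what convert this first-order information into the stated two-sided inequality.
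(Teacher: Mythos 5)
There is a genuine gap, and you have flagged it yourself: the comparison between $\|R^{-1}_x(y)\|_E$ and $d_E(x,y)$ in a generic chart is never actually carried out --- it is only described as something you ``expect'' to control by shrinking $b$. As outlined, integrating $\diffe R_x^{-1}$ along a curve would at best show that the ratio of these two quantities tends to $1$ as $b\downarrow 0$; for any fixed $b>0$ a residual retraction-dependent factor remains, so your argument would deliver the two-sided bound with a constant of the form $c\,(1+\varepsilon(b))$ rather than with the stated $c=\sqrt{\lambda_{\max}/\lambda_{\min}}$, and the quantitative control of $\varepsilon(b)$ is precisely the step that is missing. So the strategy of pinning both $\dist(x,y)$ and $\|R^{-1}_x(y)\|_x$ to a common Euclidean reference is sound, but the middle comparison is not closed.

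The paper removes this obstacle by a different choice of chart: it takes the coordinate system around $x$ to be $\left(R_x(U),\,R_x^{-1}\right)$ itself. In that chart the coordinate of $x$ is $0$ and the coordinate of $y$ is literally the tangent vector $R^{-1}_x(y)$ (identifying $T_x\mani$ with $\mathbb{R}^n$), so $\|\hat{x}-\hat{y}\|=\|R^{-1}_x(y)\|$ holds as an exact identity and no approximate-isometry estimate for $R_x^{-1}$ is needed at all. The metric sandwich $\lambda_{\min}\|\hat{v}\|^2\le\langle v,v\rangle_x\le\lambda_{\max}\|\hat{v}\|^2$ applied to $v=R^{-1}_x(y)$, together with the standard comparison $\sqrt{\lambda_{\min}}\,\|\hat{x}-\hat{y}\|\le\dist(x,y)\le\sqrt{\lambda_{\max}}\,\|\hat{x}-\hat{y}\|$ (which is exactly your curve-length argument, and is cited from the literature in the paper), then yields the factor $c$ with no loss. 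If you replace your generic chart by $R_x^{-1}$, your two-step scheme becomes the paper's proof; without that choice, the key comparison remains open.
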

    
    \begin{proof} 
    Note that $R_x(U)$ is an open neighborhood of $x$ and $U$ can be identified with an open set of $\mathbb{R}^d$. Therefore, for $x \in \mathcal{M}$, $(R_x(U), R_x^{-1})$ forms a coordinate neighborhood around $x$.  Hence, we have
    \begin{equation}
    \widehat{R_x^{-1}(y)}=R_x^{-1}(y)\label{eq:1731-1}
    \end{equation}
    for $y \in B(x^*,b)$.
    
    \tred{Choose $ v\in T_x \mathcal{M}$ with $x \in B(x^*,b)$ arbitrarily}.
    By the definition of $\lambda_{\max} $ and $\lambda_{\min}$, we have
    \begin{eqnarray*}
     && \lambda_{\min} \| \hat{v} \|^2 \leq \langle v,v \rangle_x \leq \lambda_{\max} \| \hat{v} \| ^2,
     \end{eqnarray*}
\tblue{where the norm in the left and right sides stands for the Euclidean norm.}
This yields
   \begin{eqnarray*}
   \frac{\| v \|_x}{\sqrt{\lambda_{\max}}} \leq \| \hat{v} \| \leq \frac{\| v \|_x}{\sqrt{\lambda_{\min}}}.
    \end{eqnarray*}
    Substituting $v = R^{-1}_x(y) $ into the above and noting \eqref{eq:1731-1}, we obtain
    \begin{eqnarray}
     \frac{\| R^{-1}_x(y) \|_x}{\sqrt{\lambda_{\max}}} \leq \|{R^{-1}_x(y)}\| \leq \frac{\| R^{-1}_x(y) \|_x}{\sqrt{\lambda_{\min}}}. \label{E main (i)}    \end{eqnarray}
    
    Given the formulation of coordinate neighborhood, the following holds:
    \begin{eqnarray}
     \tblue{\|R_x^{-1}(y) \|} &=& \| R_x^{-1}(x)-R_x^{-1}(y) \| = \| \hat{x}-\hat{y} \|. \label{E main (ii)}
    \end{eqnarray}
Using the inequalities $ \sqrt{\lambda_{\min}} \| \hat{x}-\hat{y} \| \leq \text{dist}(x,y) \leq \sqrt{\lambda_{\max}} \| \hat{x}-\hat{y} \|$ in \cite{riemanndistandlocalcoordinate} for the current local coordinate system, we obtain 
\begin{eqnarray}
    \sqrt{\lambda_{\min}} \| R_x^{-1}(y) \| \leq \dist(x,y) \leq  \sqrt{\lambda_{\max}} \| R_x^{-1}(y) \|, \label{E main (iii)}
\end{eqnarray}
where \eqref{E main (ii)} is used in the place of $\| \hat{x}-\hat{y} \| $.
Noting that 
\begin{eqnarray*}
    \begin{cases}
        \sqrt{\frac{\lambda_{\min}}{\lambda_{\max}}} \| R_x^{-1}(y) \|_x \leq \sqrt{\lambda_{\min}} \| R_x^{-1}(y) \| & \\
        \sqrt{\lambda_{\max}} \| R_x^{-1}(y) \| \leq \sqrt{\frac{\lambda_{\max}}{\lambda_{\min}}} \| R_x^{-1}(y) \|_x
    \end{cases}
\end{eqnarray*}
holds by \eqref{E main (i)}, we find that \eqref{E main (iii)} leads to 
    \begin{eqnarray*}
     & \sqrt{ \frac{\lambda_{\min}}{\lambda_{\max}} } \| R_x^{-1}(y) \|_x \leq \text{dist}(x,y) \leq \sqrt{ \frac{\lambda_{\max}}{\lambda_{\min}} } \| R_x^{-1}(y) \|_x,&
     \end{eqnarray*}
     \tred{equivalently,}
     \begin{eqnarray*}
     &\sqrt{ \frac{\lambda_{\min}}{\lambda_{\max}} } \text{dist}(x,y) \leq \| R_x^{-1}(y) \|_x \leq \sqrt{ \frac{\lambda_{\max}}{\lambda_{\min}} } \text{dist}(x,y).&
    \end{eqnarray*}
      \hfill$\Box$
    \end{proof}

    \begin{lemma}\label{local Lipschitz like}
        Suppose that Assumption \ref{Assumption for local zero}(a) holds. Then, there exists some $ L >0 $ such that $ \| F(x)-F(y) \| \leq L \; \| R^{-1}_y(x) \|$ holds for all $ x,y \in B(x^*,b)$.
    \end{lemma}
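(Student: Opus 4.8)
The plan is to start from the triangle inequality, artificially inserting the linearization term $J(y)R^{-1}_y(x)$ that appears in Assumption~\ref{Assumption for local zero}(a), and then bound the two resulting pieces separately. Concretely, I would write
\begin{equation*}
\|F(x)-F(y)\| \le \|F(x)-F(y)-J(y)R^{-1}_y(x)\| + \|J(y)R^{-1}_y(x)\|,
\end{equation*}
control the first term by Assumption~\ref{Assumption for local zero}(a) to get $c_1\|R^{-1}_y(x)\|^2$, and control the second term by the operator-norm bound \eqref{upperK}, giving $\|J(y)R^{-1}_y(x)\| \le \|J(y)\|\,\|R^{-1}_y(x)\| \le K\|R^{-1}_y(x)\|$.

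Combining these yields $\|F(x)-F(y)\| \le \bigl(c_1\|R^{-1}_y(x)\| + K\bigr)\|R^{-1}_y(x)\|$, so the only remaining task is to obtain a \emph{uniform} upper bound on $\|R^{-1}_y(x)\|$ over all $x,y\in B(x^*,b)$. This is where Lemma~\ref{lemma inv retraction} and the compactness of the ball come in: that lemma gives $\|R^{-1}_y(x)\| \le c\,\dist(x,y)$, and the triangle inequality for the Riemannian distance gives $\dist(x,y)\le \dist(x,x^*)+\dist(x^*,y)\le 2b$, whence $\|R^{-1}_y(x)\|\le 2bc$. Setting $L := 2bc\,c_1 + K$ then produces $\|F(x)-F(y)\| \le L\|R^{-1}_y(x)\|$, as required.

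I expect the only delicate point to be keeping the norms consistent: the quadratic estimate in Assumption~\ref{Assumption for local zero}(a), the distance estimate in Lemma~\ref{lemma inv retraction}, and the factor $\|R^{-1}_y(x)\|$ in the target inequality are all phrased with the Riemannian norm $\|\cdot\|_y$ on $T_y\mani$ (with the subscript suppressed), and I would verify that the operator norm $\|J(y)\|$ bounding the middle step is precisely the one induced by that Riemannian norm on its domain, which matches the definition given in the Notations subsection. Beyond this bookkeeping the argument is a routine triangle-inequality and factoring computation; I would also note in passing that $R^{-1}_y(x)$ is well-defined throughout $B(x^*,b)$, since this is already part of the standing hypotheses needed even to state Assumption~\ref{Assumption for local zero}(a).
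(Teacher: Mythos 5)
Your proposal is correct and follows essentially the same route as the paper: the same triangle-inequality decomposition through $J(y)R^{-1}_y(x)$, Assumption~\ref{Assumption for local zero}(a) for the quadratic remainder, and the bound $\|R^{-1}_y(x)\|\le c\,\dist(x,y)\le 2cb$ from Lemma~\ref{lemma inv retraction} to absorb it into a linear term. The only (immaterial) difference is that you bound the linear term by the pre-established operator-norm constant $K$ from \eqref{upperK}, while the paper re-derives a bound via $\max_{z\in B(x^*,b)}\sqrt{\sum_{i}\|\grad F_i(z)\|_z^2}$; both rest on the same compactness argument.
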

    
    \begin{proof} 
        For $ x,y \in B(x^*,b)$, we have
    \begin{eqnarray*}
     \| F(x)-F(y) \| &\leq& \| (F(x)-F(y))- J(y) R^{-1}_y(x) \| + \| J(y) R^{-1}_y(x) \| \\
     & \leq & c_1 \| R_y^{-1}(x)\|^2 +\sqrt{\sum_{i=1}^{m} \| \text{grad}F_i(y) \|_y^2} \; \| R^{-1}_y(x)\| \\
     & = & \left(c_1 \| R^{-1}_y(x) \| + \sqrt{\sum_{i=1}^{m} \| \text{grad}F_i(y) \|_y^2} \; \right) \; \| R^{-1}_y(x) \| \\
     & \leq &\left(2c_1 cb + \sqrt{\sum_{i=1}^{m} \| \text{grad}F_i(y) \|_y^2} \; \right) \; \| R^{-1}_y(x) \|, 
    \end{eqnarray*}
    where Assumption~\ref{Assumption for local zero} (a) is applied in the second inequality and the final one follows from $\| R^{-1}_y(x) \| \leq c \;\text{dist}(x,y) \leq 2cb$.
    
    Here, $\text{grad}F_i \; (1 \leq i \leq m)$ are continuous because $F$ is a $C^1$ function. Moreover, the norm defined by the Riemannian metric is continuous. Consequently, $\sqrt{\sum_{i=1}^{m} \| \text{grad}F_i(y) \|_y^2}$ is a continuous function on the compact set $ B(x^*,b)$ and hence it attains the maximum value on $B(x^*,b)$.
    
      In terms of 
$$
L := 2c_1 cb + \max_{z \in B(x^*,b)}{\sqrt{\sum_{i=1}^{m} \| \text{grad}g_i(z) \|_z^2}}>0,
$$
the following inequality is established:
    \begin{eqnarray*}
     \| F(x)-F(y) \| \leq L \; \| R^{-1}_y(x) \| \;  \text{ for all } x,y \in B(x^*,b).
    \end{eqnarray*}
    \hfill$\Box$
    \end{proof} 

 \subsection{Quadratic convergence for zero-residual cases} \label{local zero}
In this subsection, 
we consider the zero-residual case, namely, $f^*:=f(x^*)=0$ for the stationary point $x^*$. 
    \tblue{We suppose that $\text{flag}^{\text{nz}} = \text{false} $ in Algorithm~\ref{alg1}}.    

    \begin{lemma}\label{Lemma 5.1.3}
        Suppose $x_k \in B(x^*,\frac{b}{2})$ with some $k$. Under Assumption \ref{Assumption for local zero}, we have
    \begin{eqnarray}
     && \| s_k \| \leq c_3 \| R^{-1}_{x_k}(\overline{x_k}) \|, \label{Lemma 5.1.3-1} \\
     && \| J_k s_k+F(x_k)\| \leq c_4 \| R^{-1}_{x_k}(\overline{x_k}) \|^2, \label{Lemma 5.1.3-2}
    \end{eqnarray}
    where
    $$ c_3 := \sqrt{\frac{c_1^2+c_2^2 \; \mu_{\min}}{c_2^2 \; \mu_{\min}}},\ c_4 := \sqrt{c_1^2+L^2 \tred{\mumaxz}}.$$
    \end{lemma}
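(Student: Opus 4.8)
The plan is to exploit that $s_k$ is the unique minimizer of the strongly convex subproblem $\theta^k$ and to compare $\theta^k(s_k)$ with its value at the natural comparison vector $\hat{s}_k := R^{-1}_{x_k}(\bxk)$, the inverse retraction pointing from $x_k$ toward the nearest stationary point $\bxk$. First I would check that the hypotheses apply at both endpoints: since $x^*\in X^*$ and $x_k\in B(x^*,\tfrac{b}{2})$, we have $\dist(x_k,\bxk)=\Dist(x_k,X^*)\le \dist(x_k,x^*)\le \tfrac{b}{2}$, so by the triangle inequality $\bxk\in B(x^*,b)$; thus both $x_k,\bxk\in B(x^*,b)$, on which $R^{-1}_{x_k}(\bxk)$ is well defined and Assumption~\ref{Assumption for local zero}(a),(b) and Lemma~\ref{local Lipschitz like} are all in force. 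Because we are in the zero-residual case, $f(\bxk)=f^*=0$ forces $F(\bxk)=0$. I would also assume $F(x_k)\neq 0$, so that $\lambda_k=\mu_k\|F(x_k)\|^2>0$ and $s_k$ is unambiguously defined; the case $F(x_k)=0$ means $x_k$ already solves \eqref{P} exactly.

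The core estimate is $\theta^k(s_k)\le \theta^k(\hat{s}_k)$. Applying Assumption~\ref{Assumption for local zero}(a) with $y=x_k$, $x=\bxk$ and using $F(\bxk)=0$ gives $\|F(x_k)+J_k\hat{s}_k\|\le c_1\|\hat{s}_k\|^2$, controlling the linearized-residual part of $\theta^k(\hat s_k)$. From here the two bounds follow by reading off different pieces of $\theta^k(s_k)=\|F(x_k)+J_ks_k\|^2+\lambda_k\|s_k\|^2$. For \eqref{Lemma 5.1.3-2} I would bound both terms of $\theta^k(\hat s_k)$ at order $\|\hat s_k\|^4$: the first by $c_1^2\|\hat s_k\|^4$ as above, and the damping term $\lambda_k\|\hat s_k\|^2=\mu_k\|F(x_k)\|^2\|\hat s_k\|^2$ via $\mu_k\le\mumaxz$ (Assumption~\ref{Assumption for local zero}(c)) together with $\|F(x_k)\|=\|F(\bxk)-F(x_k)\|\le L\|\hat s_k\|$ from Lemma~\ref{local Lipschitz like}, which yields $L^2\mumaxz\|\hat s_k\|^4$. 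Then $\|J_ks_k+F(x_k)\|^2\le\theta^k(s_k)\le (c_1^2+L^2\mumaxz)\|\hat s_k\|^4=c_4^2\|\hat s_k\|^4$, giving \eqref{Lemma 5.1.3-2}.

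For \eqref{Lemma 5.1.3-1} the crucial point is \emph{not} to overbound the damping term. Starting from $\lambda_k\|s_k\|^2\le\theta^k(s_k)\le c_1^2\|\hat s_k\|^4+\lambda_k\|\hat s_k\|^2$ and dividing through by $\lambda_k=\mu_k\|F(x_k)\|^2$, the term $\lambda_k\|\hat s_k\|^2$ survives as $\|\hat s_k\|^2$, leaving an additional contribution $c_1^2\|\hat s_k\|^4/(\mu_k\|F(x_k)\|^2)$. The main obstacle, and precisely where the error-bound condition is indispensable, is controlling this last term: Assumption~\ref{Assumption for local zero}(b) in the form $c_2\|\hat s_k\|=c_2\|R^{-1}_{x_k}(\bxk)\|\le\|F(x_k)\|$ converts $\|\hat s_k\|^4/\|F(x_k)\|^2$ into $\|\hat s_k\|^2/c_2^2$, while $\mu_k\ge\mu_{\min}$ disposes of the damping. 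Collecting the two pieces gives $\|s_k\|^2\le\bigl(c_1^2/(c_2^2\mu_{\min})+1\bigr)\|\hat s_k\|^2=c_3^2\|\hat s_k\|^2$, which is \eqref{Lemma 5.1.3-1}. I expect the only delicate bookkeeping to be keeping straight which steps invoke $\mumaxz$ (the second bound) and which invoke $\mu_{\min}$ (the first bound), and confirming at the outset that $\hat s_k$ lies in the domain of $R^{-1}_{x_k}$.
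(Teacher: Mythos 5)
Your proposal is correct and follows essentially the same route as the paper's proof: comparing $\theta^k(s_k)$ with $\theta^k(R^{-1}_{x_k}(\bxk))$, using Assumption~\ref{Assumption for local zero}(a) with $F(\bxk)=0$ for the linearized residual, the error bound (b) with $\mu_k\ge\mu_{\min}$ for \eqref{Lemma 5.1.3-1}, and Lemma~\ref{local Lipschitz like} with $\mu_k\le\mumaxz$ for \eqref{Lemma 5.1.3-2}. The preliminary checks (that $\bxk\in B(x^*,b)$ and that the comparison vector is well defined) match the paper as well.
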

    
    \begin{proof}
    It follows that $ \theta^k(s_k) \leq \theta^k( R^{-1}_{x_k}(\overline{x_k}))$ from \eqref{ s_k = argmin theta}. 
Moreover, $\lambda_k \| s_k \|_{x_k}^2 \leq \theta^k(s_k)$ holds from \eqref{original theta_k_s}. Therefore, we have  
    \begin{eqnarray}\label{Lemma 5.1.3 (i)}
     \|s_k \|^2 \leq \frac{1}{ \lambda_k} \theta^k(s_k)  &\leq&  \frac{1}{ \lambda_k} \theta^k( R^{-1}_{x_k}(\overline{x_k})) \notag \\
     &=& \frac{1}{ \lambda_k}( \|F(x_k)+ J_k R^{-1}_{x_k}(\overline{x_k}) \|^2 + \lambda_k \| R^{-1}_{x_k}(\overline{x_k}) \|^2).
    \end{eqnarray}
    
    Next, we can ensure $\overline{x_k} \in B(x^*,b)$ under $x_k \in B(x^*,\frac{b}{2})$ as follows:
    \begin{eqnarray*}
     \text{dist}( \overline{x_k},x^*) \leq \text{dist}( \overline{x_k},x_k) + \text{dist}(x_k,x^*) \leq 2 \; \text{dist}(x_k,x^*) \leq b.
    \end{eqnarray*}
    
    Thus, applying Assumption \ref{Assumption for local zero} and noting $F(\overline{x_k})= \bm{0}$ from $\overline{x_k} \in X^*$, we obtain
    \begin{eqnarray} 
     \| F(x_k)+J_k R^{-1}_{x_k}(\overline{x_k}) \|^2 &=& \| J_k R^{-1}_{x_k}(\overline{x_k}) - (F(\overline{x_k})-F(x_k)) \|^2 \notag \\
     &\leq& c_1^2 \| R^{-1}_{x_k}(\overline{x_k}) \|^4, \label{uoaaa}
     \end{eqnarray}
    and 
      moreover, by $c_2 \| R^{-1}_{x_k}(\overline{x_k})\| \leq \| F(x_k) \|$ together with $\lambda_k = \mu_k \| F(x_k) \|^2$ and $ \mu_k \geq \mu_{\min}$, we gain 
     \begin{eqnarray}
     & c_2^2 \; \| R^{-1}_{x_k}(\overline{x_k})\|^2 \leq \frac{\lambda_k}{\mu_{\min}}.\notag
     \end{eqnarray}
Combining these relations with  (\ref{Lemma 5.1.3 (i)}) yields
    \begin{eqnarray*}
     \| s_k \|^2 &\leq& \left( \frac{c_1^2}{\lambda_k} \| R^{-1}_{x_k}(\overline{x_k})\|^2 + 1 \right) \; \| R^{-1}_{x_k}(\overline{x_k})\|^2\\ 
                    &\leq& \left(\frac{c_1^2}{\lambda_k}\frac{\lambda_k}{c_2^2 \; \mu_{\min}} + 1 \right) \; \| R^{-1}_{x_k}(\overline{x_k})\|^2\\
                    &=&\tred{c_3^2\| R^{-1}_{x_k}(\overline{x_k})\|^2}.
    \end{eqnarray*}
    Therefore,
    $
     \| s_k \| \leq c_3 \; \| R^{-1}_{x_k}(\overline{x_k})\|
    $
    has been proved.\\
    
    Next, we show that $\| J_k s_k + F(x_k) \| \leq c_4 \; \| R^{-1}_{x_k}(\overline{x_k})\|^2 $.
    \begin{eqnarray}\label{Lemma 5.1.3 circled 1 }
     \| J_k s_k + F(x_k) \|^2 &\leq& \theta^k(s_k) \; (\text{by } \eqref{original theta_k_s}) \notag \\
      &\leq& \theta^k( R^{-1}_{x_k}(\overline{x_k})) \; (\text{by } \eqref{ s_k = argmin theta}) \notag \\
      &\leq& c_1^2 \| R^{-1}_{x_k}(\overline{x_k})\|^4+\lambda_k \| R^{-1}_{x_k}(\overline{x_k})\|^2, 
    \end{eqnarray}
    where the last inequality follows from \eqref{original theta_k_s} and \eqref{uoaaa}.
    Furthermore, from Lemma \ref{local Lipschitz like},
    it follows that $ \sqrt{\lambda_k} = \sqrt{\mu_k} \| F(x_k) \| = \sqrt{\mu_k}\| F(\overline{x_k})-F(x_k) \| \leq L \sqrt{\mu_k}\; \| R^{-1}_{x_k}(\overline{x_k})\|$. Hence, by \tblue{Assumption \ref{Assumption for local zero} (c)}, 
    \begin{eqnarray}\label{ Lemma 5.1.3 circled 2}
     \lambda_k \leq L^2 \mu_k \; \| R^{-1}_{x_k}(\overline{x_k})\|^2 \leq  L^2 \tred{\mumaxz} \; \| R^{-1}_{x_k}(\overline{x_k})\|^2 
    \end{eqnarray}
    Applying (\ref{ Lemma 5.1.3 circled 2}) to (\ref{Lemma 5.1.3 circled 1 }), we conclude 
    $
     \| J_k s_k + F(x_k) \|^2 \leq (c_1^2+L^2\tred{\mumaxz}) \| R^{-1}_{x_k}(\overline{x_k})\|^4, 
    $
    which is equivalent to 
    \begin{eqnarray*}
      \| J_k s_k + F(x_k) \| \leq c_4 \; \| R^{-1}_{x_k}(\overline{x_k})\|^2 .
    \end{eqnarray*}
    The proof is complete.
    \hfill$\Box$
    \end{proof}    
    Let 
    \begin{eqnarray} \label{c*_def}
        c^* := \frac{-c c_2 c_4 + \sqrt{c^2c_2^2c_4^2 + 4c^2\mumaxz c_2^2 c_3^2 L^2}}{2c^2 \mumaxz c_3^2 L^2} .
    \end{eqnarray}
    Note that $c^*$ is a strictly positive constant.
    \begin{lemma} \label{ for large k, kepp to be successful}
        \Add{Suppose that Assumption~\ref{Assumption for local zero} holds.}
        Moreover, assume that $b$ satisfies $b \leq \frac{2c_2}{c c_4} $.
        Then, there exists some $r^* >0 $ such that if $\{x_k\} $ satisfies $ x_k \in B(x^*,\min{\{\frac{b}{2},r^*\}})$ and $ \dist(x_k,x^*) < c^*$ for all $k \in \{0,1,2,\dots\}$, then all iterations are successful.
    \end{lemma}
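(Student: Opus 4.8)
The plan is to prove directly that $\rho_k\ge\eta$ at every iteration, where $\rho_k$ is the ratio in \eqref{ def of rho_k}. Abbreviate $F_k:=F(x_k)$, $F_k^+:=F(R_{x_k}(s_k))$, $r_k:=\|R^{-1}_{x_k}(\bxk)\|$, and write $N:=f(x_k)-f(R_{x_k}(s_k))$ and $D:=\tfrac12(\theta^k(0_{x_k})-\theta^k(s_k))$ for the actual and predicted reductions, so that $\rho_k=N/D$. Since $\theta^k(0_{x_k})=\|F_k\|^2=2f(x_k)$, a one-line computation from \eqref{original theta_k_s} gives the identity
\[
D-N=\tfrac12\|F_k^+\|^2-\tfrac12\|F_k+J_ks_k\|^2-\tfrac12\lambda_k\|s_k\|^2 ,
\]
and I would establish $\rho_k\ge\eta$ in the equivalent form $D-N\le(1-\eta)D$. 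A preliminary step is to fix $r^*$ so small that $x_k\in B(x^*,\min\{b/2,r^*\})$ forces $R_{x_k}(s_k)\in B(x^*,b)$: using $\|s_k\|\le c_3 r_k$ (Lemma~\ref{Lemma 5.1.3}), $r_k\le c\,\dist(x_k,x^*)$ (Lemma~\ref{lemma inv retraction}), and continuity of $R$, one gets $\dist(R_{x_k}(s_k),x^*)\le(1+c^2c_3)\dist(x_k,x^*)$, so that $r^*\le b/(1+c^2c_3)$ suffices. This is what licenses Assumption~\ref{Assumption for local zero} and Lemmas~\ref{lemma inv retraction}--\ref{Lemma 5.1.3} at the new point as well.

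Next I would bound $D-N$ from above. Assumption~\ref{Assumption for local zero}(a) applied with $(y,x)=(x_k,R_{x_k}(s_k))$ and $R^{-1}_{x_k}(R_{x_k}(s_k))=s_k$ yields $\|F_k^+-(F_k+J_ks_k)\|\le c_1\|s_k\|^2$; combined with $\|s_k\|\le c_3 r_k$, the bound $\|F_k+J_ks_k\|\le c_4 r_k^2$ of Lemma~\ref{Lemma 5.1.3}, and the estimate $\lambda_k\le L^2\mumaxz r_k^2$ obtained inside the proof of Lemma~\ref{Lemma 5.1.3}, every term on the right-hand side of the identity is $O(r_k^4)$; concretely $D-N\le \tfrac12\|F_k^+\|^2\le C\,r_k^4$ for an explicit constant $C$ depending only on $c_1,c_3,c_4$.

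The crux is a matching lower bound on $D$, and this is exactly where $\dist(x_k,x^*)<c^*$ and $b\le 2c_2/(cc_4)$ enter. The assumption on $b$ gives $r_k\le c\cdot(b/2)\le c_2/c_4$, so by the error bound $\|F_k\|\ge c_2 r_k$ (Assumption~\ref{Assumption for local zero}(b)) and $\|F_k+J_ks_k\|\le c_4 r_k^2$ we have $\|F_k+J_ks_k\|\le\|F_k\|$; factoring the difference of squares and keeping the cross term then produces
\[
\|F_k\|^2-\|F_k+J_ks_k\|^2\ \ge\ \|F_k\|\big(\|F_k\|-c_4 r_k^2\big)\ \ge\ c_2^2 r_k^2-c_2c_4 r_k^3 .
\]
Subtracting $\lambda_k\|s_k\|^2\le c_3^2L^2\mumaxz r_k^4$ gives $2D\ge r_k^2\big(c_2^2-c_2c_4 r_k-c_3^2 L^2\mumaxz r_k^2\big)$. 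Since $c^*$ is, by \eqref{c*_def}, the positive root of $c^2\mumaxz c_3^2L^2 t^2+cc_2c_4 t-c_2^2$, the hypothesis $\dist(x_k,x^*)<c^*$ together with $r_k\le c\,\dist(x_k,x^*)$ makes the parenthesized factor strictly positive, so $D\ge\tfrac12 r_k^2\,g(r_k)$ with $g(r_k)>0$. Combining with the $O(r_k^4)$ bound on $D-N$ yields $1-\rho_k=(D-N)/D\le 2C r_k^2/g(r_k)$, which tends to $0$ as $x_k\to x^*$; shrinking $r^*$ once more (now depending on $\eta$) makes this at most $1-\eta$, i.e. $\rho_k\ge\eta$, at every $k$.

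I expect the main obstacle to be the careful bookkeeping of the three comparable-but-distinct quantities $\dist(x_k,x^*)$, $\Dist(x_k,X^*)=\dist(x_k,\bxk)$, and $r_k=\|R^{-1}_{x_k}(\bxk)\|$, which differ only by the metric-distortion factor $c$ of Lemma~\ref{lemma inv retraction}, and -- the genuinely delicate point -- the refined lower bound on $D$: the naive even-power estimate $\|F_k\|^2-\|F_k+J_ks_k\|^2\ge c_2^2r_k^2-c_4^2r_k^4$ gives the wrong threshold, and only the factorization that retains the linear-in-$r_k$ cross term reproduces the quadratic whose positive root is precisely $c^*$. The condition $b\le 2c_2/(cc_4)$ is what guarantees $\|F_k+J_ks_k\|\le\|F_k\|$ and hence validates that factorization, while the additional shrinking of $r^*$ is what absorbs the $\eta$-dependence that $c^*$ alone does not see.
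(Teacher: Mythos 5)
Your proposal is correct and follows essentially the same route as the paper: analyze the ratio of actual to predicted reduction, lower-bound the predicted reduction by $\tfrac12 r_k^2\bigl(c_2^2-c_2c_4r_k-\mumaxz c_3^2L^2r_k^2\bigr)$ using the difference-of-squares factorization licensed by $b\le 2c_2/(cc_4)$, identify $c^*$ as the positive root of the resulting quadratic, and shrink $r^*$ to absorb the $\eta$-dependence. Your only deviation is the one-sided estimate $D-N\le\tfrac12\|F(R_{x_k}(s_k))\|^2=O(r_k^4)$, which is a mild streamlining of the paper's two-sided bound on $|1-\rho_k|$ (the paper's numerator estimate also carries an $O(r_k^3)$ term involving $K$), and it suffices because only $\rho_k\ge\eta$, not $\rho_k\to 1$, is needed.
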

    \begin{proof} 
        Let $x_k \in B(x^*,\frac{b}{2})$. 
        From the definition~\eqref{ def of rho_k}, we have 
    \begin{eqnarray}
     1- \rho_k &=& \frac{\frac{1}{2}\theta^k(0)-f(x_k)+f(R_{x_k}(s_k))-\frac{1}{2}\theta^k(s_k)}{\frac{1}{2}(\theta^k(0)-\theta^k(s_k))} \notag \\
     & = & \frac{f(R_{x_k}(s_k))-\frac{1}{2}\theta^k(s_k)}{\frac{1}{2}(\theta^k(0)-\theta^k(s_k))},  \label{ 1- rho_k}
    \end{eqnarray}
    where the second equality follows from $\frac{1}{2}\theta^k(0)= f(x_k)$.
    Before evaluating the denominator in \eqref{ 1- rho_k}, we first show that $\|F(x_k)\|-\|F(x_k)+J_ks_k\|$ is ensured to be positive under the assumption that $b \leq \frac{2c_2}{c c_4} $. This can be verified as follows:
    \begin{alignat}{2}
        &\; \; \; \; \; \|F(x_k)\|-\|F(x_k)+J_ks_k\| \notag \\
        &\geq  c_2 \| R^{-1}_{x_k}(\overline{x_k}) \| -\|F(x_k)+J_ks_k\| \notag 
        &\quad& (\text{by Assumption~\ref{Assumption for local zero}(b)}) \\
        &\geq \| R^{-1}_{x_k}(\overline{x_k}) \| (c_2 - c_4 \| R^{-1}_{x_k}(\overline{x_k}) \| ) \notag
        &\quad& (\text{by \eqref{Lemma 5.1.3-2} in Lemma~\ref{Lemma 5.1.3}} ) \\
        &\geq \| R^{-1}_{x_k}(\overline{x_k}) \| (c_2 - c c_4 \dist(x_k,\overline{x_k}) ) \notag 
        &\quad& (\text{by Lemma~\ref{lemma inv retraction}}) \\
        &\geq \| R^{-1}_{x_k}(\overline{x_k}) \| (c_2 - c c_4 \dist(x_k, x^*) ) \notag
        &\quad& (\text{by the definition of } \overline{x_k}) \\
        &\geq \| R^{-1}_{x_k}(\overline{x_k}) \| (c_2 - \frac{bcc_4}{2} ) \notag
        &\quad& (\text{by the assumption }x_k \in B(x^*,\min{\{\frac{b}{2},r^*\}})) \\
        &\geq 0. \label{positibity of diffe in cur and nxt}
        &\quad& (\text{by the assumption } b \leq \frac{2c_2}{c c_4})
    \end{alignat}
    Then, by Lemma \ref{Lemma 5.1.3}, the denominator in \eqref{ 1- rho_k} is evaluated as
    \begin{eqnarray}
        && \frac{1}{2}\left(\theta^k(0)-\theta^k(s_k) \right) \notag \\
        &=& \frac{1}{2}\|F(x_k)\|^2-\frac{1}{2}\| F(x_k) + J_k s_k \|^2-\frac{\lambda_k}{2}\|s_k\|^2 \notag \\
        &=& \frac{1}{2}\left(\| F(x_k) \|+\|F(x_k)+J_ks_k\|\right)\left(\|F(x_k)\|-\|F(x_k)+J_ks_k\|\right)-\frac{\mu_k }{2}\|F(x_k)\|^2 \|s_k\|^2 \notag \\
        &\geq& \frac{1}{2} \| F(x_k) \| \left(\|F(x_k)\|-\|F(x_k)+J_ks_k\|\right)-\frac{\mu_k }{2}\|F(x_k)\|^2 \|s_k\|^2 \notag (\text{by \eqref{positibity of diffe in cur and nxt}})\\
        &\geq& \frac{1}{2}c_2 \| R^{-1}_{x_k}(\overline{x_k}) \|\left(\|F(x_k)\|-\|F(x_k)+J_ks_k\|\right) - \frac{\tred{\mumaxz}}{2}L^2\|R^{-1}_{x_k}(\overline{x_k})\|^2\|s_k\|^2 \notag (\text{by Assumption~\ref{Assumption for local zero}(b),(c), Lemma~\ref{local Lipschitz like}} )\notag \\
        & \geq& \frac{1}{2}c_2 \|R^{-1}_{x_k}(\overline{x_k})\|^2\left(c_2-c_4 \|R^{-1}_{x_k}(\overline{x_k})\| \right)-\frac{\tred{\mumaxz}}{2}c_3^2L^2\|R^{-1}_{x_k}(\overline{x_k})\|^4. \label{ denominator eval } (\text{by Assumption~\ref{Assumption for local zero}(b), Lemma~\ref{Lemma 5.1.3}} )
    \end{eqnarray}
Note that
    \begin{alignat}{2}
        &\; \; \; \; \;  \| F(x_k) \| \notag \\
        & \leq  \| J_{k-1} s_{k-1}- (F(x_k) - F(x_{k-1})) \| + \| J_{k-1} s_{k-1} +F(x_{k-1}) \| \notag \\
        & \leq  c_1 \| s_{k-1} \|^2 + c_4 \| R^{-1}_{x_{k-1}} (\overline{x_{k-1}} )\|^2 \notag
        &\quad& (\text{by Assumption~\ref{Assumption for local zero}(a) and \eqref{Lemma 5.1.3-2} }) \notag \\
        & \leq  (c_1 c_3^2 + c_4 ) \| R^{-1}_{x_{k-1}} (\overline{x_{k-1}} )\|^2. \label{next point bound}
        &\quad& (\text{by } \eqref{Lemma 5.1.3-1} \text{ in Lemma~\ref{Lemma 5.1.3}}) 
    \end{alignat}
    The absolute value of the numerator in \eqref{ 1- rho_k} is bounded as
    \begin{eqnarray*}
     && \left| f(R_{x_k}(s_k))-\frac{\theta^k(s_k)}{2}\right| \notag \\
     &=& \frac{1}{2}\left| \left(\|F(R_{x_k}(s_k))\| + \|F(x_k) + J_k s_k \| \right) \left(\|F(R_{x_k}(s_k))\| - \|F(x_k) + J_k s_k \| \right) - \mu_k \|F(x_k)\|^2 \|s_k\|^2 \right| \notag \\
     & \leq & \frac{1}{2}\left( \|F(R_{x_k}(s_k)) \| + \|F(x_k)\|+\|J_k\|\|s_k\|\right)\| J_ks_k-\left(F(R_{x_k}(s_k)-F(x_k))\right) \| + \frac{1}{2}\mu_k \|F(x_k)\|^2 \|s_k\|^2 \notag \\
     & \leq & \frac{1}{2}\left(\|F(R_{x_k}(s_k)) \| + L\|R^{-1}_{x_k}(\overline{x_k})\| + c_3 \| J_k \| \|R^{-1}_{x_k}(\overline{x_k})\|  \right) \| J_ks_k-\left(F(R_{x_k}(s_k)-F(x_k))\right) \| \notag \\
     && + \tblue{\frac{\tred{\mumaxz}}{2}L^2 c_3^2} \|R^{-1}_{x_k}(\overline{x_k})\|^4 \; (\text{by Assumption~\ref{Assumption for local zero} (c), Lemma~\ref{local Lipschitz like}, and \eqref{Lemma 5.1.3-1} })\notag \\
    \end{eqnarray*}
    For the first term, we have
    $\|F(R_{x_k}(s_k)) \| \leq (c_1 c_3^2 +c_4)\|R^{-1}_{x_k}(\overline{x_k})\|^2 $ by noticing $x_{k+1}=R_{x_k}(s_k)$ and using \eqref{next point bound}.
    Furthermore, using   $\|J_k\|\leq K$ by \eqref{upperK} and $\| J_ks_k-\left(F(R_{x_k}(s_k)-F(x_k))\right) \| \leq c_1 c_3^2 \|R^{-1}_{x_k}(\overline{x_k})\|^2$ derived by
 Assumption~\ref{Assumption for local zero}(a) together with \eqref{Lemma 5.1.3-1}, we have
\begin{eqnarray}\label{ numerator eval}
  & & \left| f(R_{x_k}(s_k))-\frac{\theta^k(s_k)}{2}\right| \\
 &  \leq &   \frac{c_1 c_3^2 (L+c_3 K)}{2}\| R^{-1}_{x_k}(\overline{x_k})\|^3 + \frac{c_3^2}{2}\left(c_1(c_1 c_3^2 + c_4) +\tred{\mumaxz}L^2  \right) \| R^{-1}_{x_k}(\overline{x_k})\|^4. \notag
\end{eqnarray}      
  Using (\ref{ denominator eval }) and (\ref{ numerator eval}) for \eqref{ 1- rho_k}, we have
    \begin{eqnarray}
        && | 1- \rho_k | \notag \\
        &\leq& \frac{c_1 c_3^2 (L+c_3 K)\| R^{-1}_{x_k}(\overline{x_k})\| + c_3^2\left(c_1(c_1 c_3^2 + c_4) +\tred{\mumaxz}L^2 c_3^2 \right) \| R^{-1}_{x_k}(\overline{x_k})\|^2}{c_2 \left(c_2-c_4 \|R^{-1}_{x_k}(\overline{x_k})\| \right)-\tred{\mumaxz}c_3^2L^2\|R^{-1}_{x_k}(\overline{x_k})\|^2}\notag \\
        &\leq& \frac{c c_1 c_3^2 (L+c_3 K)\Dist(x_k,X^*) + c^2 c_3^2 \left(c_1(c_1 c_3^2 + c_4) +\tred{\mumaxz}L^2 c_3^2 \right) \Dist(x_k,X^*)^2}{c_2 \left(c_2-c c_4 \Dist(x_k,X^*) \right)-c^2 \tred{\mumaxz}c_3^2L^2\Dist(x_k,X^*)^2}  \notag \\
        &\leq& \frac{c c_1 c_3^2 (L+c_3 K)\dist(x_k,x^*) + c^2 c_3^2 \left(c_1(c_1 c_3^2 + c_4) +\tred{\mumaxz}L^2 c_3^2 \right) \dist(x_k,x^*)^2}{c_2 \left(c_2-c c_4 \dist(x_k,x^*) \right)-c^2 \tred{\mumaxz}c_3^2L^2\dist(x_k,x^*)^2} , \notag \\
        &&  \label{ |1-rhok|}
    \end{eqnarray}
    where the second inequality follows from $ \| R^{-1}_{x_k}(\overline{x_k})\| \leq c \Dist(x_k,X^*) $ by Lemma~\ref{lemma inv retraction} and the last one follows from $\Dist(x_k,X^*) \leq \dist(x_k,x^*) $ by their definitions. 
    Note that the denominator of \eqref{ |1-rhok|} is positive because of the assumption $\dist(x_k,x^*) < c^*$, where $c^*$ is defined by \eqref{c*_def}.
    From \eqref{ |1-rhok|}, it follows that $ | 1- \rho_k | \rightarrow 0 $ as  $\dist(x_k,x^*) \rightarrow 0  $, which implies that 
    there exists some $r^* >0 $ such that if $x_k \in B(x^*,\min{\{\frac{b}{2},r^*\}}) $, then $\rho_k \geq \eta $ holds for the given $\eta \in (0,1) $. 
    
    Therefore, if $\{x_k\} $ satisfies $ x_k \in B(x^*,\min{\{\frac{b}{2},r^*\}})$ and $\dist(x_k,x^*) < c^* $ for all $k \in \{0,1,2,\dots\}$, then all iterations are successful. The proof is complete.
    \hfill$\Box$
    \end{proof}
    
    Hereinafter, we assume 
    \Add{$b \leq \frac{2c_2}{c c_4}$, $ \frac{b}{2} \leq r^*$, and $\frac{b}{2} < c^* $}. This condition is fulfilled by re-taking a sufficiently small $b$ if necessary.
    \begin{lemma}\label{ quadratic reduction of distance}
        \Add{Suppose that Assumption~\ref{Assumption for local zero} holds.}
        If $x_k,x_{k-1} \in B(x^*,\frac{b}{2})$ hold with some $ k \geq 1$, then $ \Dist(x_k,X^*) \leq c_5 \Dist(x_{k-1},X^*)^2$ holds, where $ c_5 := \frac{ \text{\Add{$c^3$}} (c_1 c_3^2+c_4)}{c_2}$.
    \end{lemma}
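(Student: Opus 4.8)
The plan is to control $\Dist(x_k,X^*)$ by $\|F(x_k)\|$ through the local error bound, then bound $\|F(x_k)\|$ quadratically in $\|R^{-1}_{x_{k-1}}(\overline{x_{k-1}})\|$ using the already-established estimate \eqref{next point bound}, and finally convert every inverse-retraction norm back into a Riemannian distance via Lemma~\ref{lemma inv retraction}. First I would record that the $(k-1)$-th iteration is successful, so that $x_k=R_{x_{k-1}}(s_{k-1})$ and hence $s_{k-1}=R^{-1}_{x_{k-1}}(x_k)$; this is precisely what makes \eqref{next point bound} (and the application of Assumption~\ref{Assumption for local zero}(a) with $y=x_{k-1}$, $x=x_k$) legitimate. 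Under the standing smallness hypotheses on $b$ together with $x_{k-1}\in B(x^*,\frac b2)$, the argument of Lemma~\ref{ for large k, kepp to be successful} gives $\rho_{k-1}\ge\eta$, which yields this identity.

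Next I would bound $\Dist(x_k,X^*)$ from above. Since $\Dist(x_k,X^*)=\dist(x_k,\overline{x_k})$, the left inequality of Lemma~\ref{lemma inv retraction} gives $\dist(x_k,\overline{x_k})\le c\,\|R^{-1}_{x_k}(\overline{x_k})\|$, and then the zero-residual form of the local error bound, Assumption~\ref{Assumption for local zero}(b) reading $c_2\|R^{-1}_{x_k}(\overline{x_k})\|\le\|F(x_k)\|$, produces $\Dist(x_k,X^*)\le\frac{c}{c_2}\|F(x_k)\|$. I would then invoke \eqref{next point bound}, which already packages Assumption~\ref{Assumption for local zero}(a) with the bounds \eqref{Lemma 5.1.3-1} and \eqref{Lemma 5.1.3-2} of Lemma~\ref{Lemma 5.1.3}, to get $\|F(x_k)\|\le(c_1c_3^2+c_4)\|R^{-1}_{x_{k-1}}(\overline{x_{k-1}})\|^2$, and finally apply the right inequality of Lemma~\ref{lemma inv retraction} to convert $\|R^{-1}_{x_{k-1}}(\overline{x_{k-1}})\|\le c\,\dist(x_{k-1},\overline{x_{k-1}})=c\,\Dist(x_{k-1},X^*)$. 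Chaining the three estimates yields
$$\Dist(x_k,X^*)\le\frac{c}{c_2}(c_1c_3^2+c_4)\,c^2\,\Dist(x_{k-1},X^*)^2=\frac{c^3(c_1c_3^2+c_4)}{c_2}\,\Dist(x_{k-1},X^*)^2,$$
which is exactly the claim with $c_5=\frac{c^3(c_1c_3^2+c_4)}{c_2}$.

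This is a short chain of inequalities, so I do not expect a genuine analytic obstacle; the only point requiring care is the bookkeeping of the factor $c$ from Lemma~\ref{lemma inv retraction}, which enters once to the first power when passing from $\dist(x_k,\overline{x_k})$ to $\|R^{-1}_{x_k}(\overline{x_k})\|$ and once more squared when passing from $\|R^{-1}_{x_{k-1}}(\overline{x_{k-1}})\|^2$ to $\Dist(x_{k-1},X^*)^2$, which together give the overall $c^3$. I would also verify that $\overline{x_k}$ and $\overline{x_{k-1}}$ both lie in $B(x^*,b)$, so that Lemma~\ref{lemma inv retraction} and the error bound apply at those points; this follows from $x_k,x_{k-1}\in B(x^*,\frac b2)$ by the same triangle-inequality argument used in the proof of Lemma~\ref{Lemma 5.1.3}.
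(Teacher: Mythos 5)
Your proposal is correct and follows essentially the same route as the paper's proof: establish that the $(k-1)$-th iteration is successful via Lemma~\ref{ for large k, kepp to be successful}, then chain $\frac{c_2}{c}\Dist(x_k,X^*)\le c_2\|R^{-1}_{x_k}(\overline{x_k})\|\le\|F(x_k)\|\le(c_1c_3^2+c_4)\|R^{-1}_{x_{k-1}}(\overline{x_{k-1}})\|^2\le c^2(c_1c_3^2+c_4)\Dist(x_{k-1},X^*)^2$ using Lemma~\ref{lemma inv retraction}, Assumption~\ref{Assumption for local zero}(b), and \eqref{next point bound}. Your explicit checks that $\overline{x_k},\overline{x_{k-1}}\in B(x^*,b)$ are a sensible addition the paper leaves implicit here.
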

    \begin{proof}
First of all, note that the $(k-1)$-th iteration is successful by Lemma~\ref{ for large k, kepp to be successful} and $ x_{k-1} \in B(x^*,\frac{b}{2})$.
It follows that 
    \begin{alignat}{2}
     & \; \; \; \; \; \frac{c_2}{c} \; \text{Dist}(x_k,X^*) \notag \\
     &\leq c_2 \; \| R^{-1}_{x_k}(\overline{x_k}) \| \notag 
     &\quad& (\text{by Lemma~\ref{lemma inv retraction} \tred{with $(x,y)=(x_k,\bxk)$}}) \\
     & \leq  \| F(x_k) \| \notag 
     &\quad& (\text{by Assumption~\ref{Assumption for local zero}(b) }) \\
     & \leq  (c_1 c_3^2 + c_4 ) \| R^{-1}_{x_{k-1}} (\overline{x_{k-1}} )\|^2  \notag
     &\quad& (\text{by \eqref{next point bound} \tred{in the proof of Lemma~\ref{ for large k, kepp to be successful}}})\\
     & \leq  c^2 (c_1 c_3^2+c_4)\; \text{Dist}(x_{k-1},X^*)^2. \notag
     &\quad& (\text{by Lemma~\ref{lemma inv retraction} \tred{with $(x,y)=(x_{k-1},\overline{x_{k-1}})$}})
    \end{alignat}
    Therefore, we conclude
    \begin{eqnarray*}
     \text{Dist}(x_k,X^*) \leq c_5 \; \text{Dist}(x_{k-1},X^*)^2.
    \end{eqnarray*}
    \hfill$\Box$
    \end{proof}
    
\tred{In order to prove Lemma~\ref{sufficiently small r}, we will show  
$\dist (x_k, x_{k+1}) \leq c \|s_k\|$ for each $k$. This inequality trivially holds true in the Euclidean case. For the verification of the inequality in the present manifold setting, we need the following lemma concerning $\injR $ that is defined in Definition~\ref{inj of exp}:
}
\begin{lemma} \label{lemm inf inj}
    There exists some $ b^* > 0 $ such that if $ 0 < b \leq b^*$, then
    \begin{eqnarray}
        \inf_{x \in B(x^*,\frac{b}{2})} \injR(x) \geq  \frac{b c c_3}{2} \label{inf inj is positive}
    \end{eqnarray}
    holds.
\end{lemma}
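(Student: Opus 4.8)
The plan is to reduce the claim to a single base-point-independent lower bound on the injectivity radius of $R$ over a fixed compact ball, and then simply choose $b^{\ast}$ small enough that the linear-in-$b$ right-hand side $\frac{bcc_3}{2}$ falls below that bound. First I would fix a reference radius $b_0>0$, no larger than the bounds already imposed on $b$ in the preceding lemmas, and work on $\overline{B(x^*,b_0/2)}$, which is compact by completeness of $\mani$ as invoked in Section~\ref{sec:notations}. Since $\frac{bcc_3}{2}\to 0$ as $b\to 0$ while the infimum of $\injR$ over $B(x^*,b/2)$ will be shown bounded below by a positive constant, such a choice is clearly available.

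The core step is to prove that there exists $\delta>0$ with $\injR(x)\ge \delta$ for every $x\in \overline{B(x^*,b_0/2)}$. To this end I would introduce the smooth map $E\colon \TM\to \mani\times\mani$ defined by $E(x,v):=(x,R_x(v))$. Because Definition~\ref{retraction defi} gives $\diffe R_x(0_x)=\mathrm{id}$, the differential $\diffe E(x,0_x)$ is invertible at every point of the zero section, so by the inverse function theorem $E$ is a local diffeomorphism near each $(x,0_x)$; in particular each $\injR(x)$ is positive. Upgrading this pointwise positivity to a uniform bound is where a compactness argument enters: if no uniform $\delta$ existed, one could extract sequences $x_j\to\bar x\in\overline{B(x^*,b_0/2)}$ and $v_j\neq w_j$ with $\|v_j\|_{x_j},\|w_j\|_{x_j}\to 0$ but $R_{x_j}(v_j)=R_{x_j}(w_j)$, which contradicts the local injectivity of $E$ on a fixed neighborhood of $(\bar x,0_{\bar x})$ once $j$ is large. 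Hence $\inf_{x\in\overline{B(x^*,b_0/2)}}\injR(x)\ge\delta>0$.

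Finally, for $0<b\le b_0$ the inclusion $B(x^*,b/2)\subseteq\overline{B(x^*,b_0/2)}$ together with monotonicity of the infimum under set inclusion yields $\inf_{x\in B(x^*,b/2)}\injR(x)\ge\delta$. Setting $b^{\ast}:=\min\{b_0,\,2\delta/(cc_3)\}$, every $0<b\le b^{\ast}$ satisfies $\frac{bcc_3}{2}\le\frac{b^{\ast}cc_3}{2}\le\delta$, so $\inf_{x\in B(x^*,b/2)}\injR(x)\ge\delta\ge\frac{bcc_3}{2}$, which is exactly \eqref{inf inj is positive}.

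The hard part will be the uniform injectivity radius $\delta$: pointwise positivity of $\injR$ is immediate from the inverse function theorem, but the bound uniform over the whole ball is the substantive content and requires the tubular-neighborhood/contradiction argument above (equivalently, the lower semicontinuity of $x\mapsto\injR(x)$). Care is needed here precisely because $R$ is a general retraction rather than the exponential map, so the standard continuity results for the Riemannian injectivity radius cannot be quoted directly; the argument via local injectivity of $E$ and compactness of $\overline{B(x^*,b_0/2)}$ avoids relying on any such special structure.
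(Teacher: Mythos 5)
Your proof is correct, and its skeleton matches the paper's: both arguments reduce the claim to the positivity of $\inf_{x\in B(x^*,b_0/2)}\injR(x)$ over a fixed compact ball and then exploit that the right-hand side $\frac{bcc_3}{2}$ tends to $0$ as $b\to 0$ (you do this by an explicit choice $b^*=\min\{b_0,\,2\delta/(cc_3)\}$, the paper by a contradiction with a sequence $b_n\downarrow 0$ and the monotonicity of the infima under shrinking balls; these are logically equivalent). The genuine difference is in how the positive lower bound is obtained. The paper simply cites the continuity of $x\mapsto\injR(x)$ from \cite[Corollary 10.24]{Boumal} and invokes compactness to turn the infimum into a positive minimum. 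You instead re-derive the needed uniform bound from first principles via the map $E(x,v)=(x,R_x(v))$, the identity differential of $R_x$ at $0_x$, and a compactness/contradiction argument; this is more self-contained (it does not presuppose any regularity of $\injR$ beyond what the inverse function theorem gives) but essentially reproves the content of the cited result. One small point to tighten: the negation of ``$R_{x_j}$ is a diffeomorphism on $B^{T_{x_j}\mani}(r_j)$'' is not only a failure of injectivity --- it could also be that $\diffe R_{x_j}$ degenerates at some $v_j$ with $\|v_j\|_{x_j}\to 0$. Your argument survives, since the local diffeomorphism property of $E$ on a fixed neighborhood of $(\bar x,0_{\bar x})$ simultaneously yields injectivity of $R_{x_j}$ on a uniform ball and invertibility of its differential there (hence a diffeomorphism onto an open image), but you should state that you are ruling out both failure modes rather than only the injectivity one.
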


\begin{proof}
    First, we show that there exists some $b^* > 0 $ such that 
    \begin{eqnarray*}
        \inf_{x \in B(x^*,\frac{b^*}{2})} \injR(x) \geq  \frac{b^* c c_3}{2}
    \end{eqnarray*}
    holds.
    To derive a contradiction, suppose that 
    \begin{eqnarray}
        \inf_{x \in B(x^*,\frac{b}{2})} \injR(x) <  \frac{b c c_3}{2} \label{assum 4 (i)}
    \end{eqnarray}
    holds for all $ b > 0$. Let $ \{b_n\}$ be a monotonically decreasing sequence satisfying $b_n \downarrow 0 $. By the assumption, 
    \begin{eqnarray}
        \inf_{x \in B(x^*,\frac{b_n}{2})} \injR(x) <  \frac{b_n c c_3}{2} \label{assum 4 (ii)}
    \end{eqnarray}
    holds for all $n \in \{0,1,2,\dots \}$.
    By \cite[Corollary 10.24]{Boumal}, $\injR \colon \mani \rightarrow (0,\infty]$ is continuous. Combining this with the compactness of $B(x^*, \frac{b_n}{2}) $, we have 
    \begin{eqnarray}
        \inf_{x \in B(x^*,\frac{b_n}{2})} \injR(x) &=& \min_{x \in B(x^*,\frac{b_n}{2})} \injR(x) > 0. \label{assum4 (iii)}
    \end{eqnarray}
    for all $n \in \{0,1,2,\dots \}$.
    Since the left-hand side of \eqref{assum 4 (ii)} is monotonically non-decreasing with respect to $n$, we have
    \begin{eqnarray}
        0 &<& \inf_{x \in B(x^*,\frac{b_0}{2})} \injR(x) \; \; \; \; \; \; \; \; (\text{by } \eqref{assum4 (iii)})\notag  \\
        &\leq& \inf_{x \in B(x^*,\frac{b_n}{2})} \injR(x) \notag \\
        &<& \frac{b_n c c_3}{2} \label{assum4 (5)}
    \end{eqnarray}
    Taking $n \rightarrow \infty $ in \eqref{assum4 (5)} leads to a contradiction as desired. Therefore, there exists some $b^* > 0 $ such that 
    \begin{eqnarray}
        \inf_{x \in B(x^*,\frac{b^*}{2})} \injR(x) \geq \frac{b^* c c_3}{2} \label{assum 4 final}
    \end{eqnarray}
    holds. Moreover, for all $0 < b \leq b^* $, we have 
    \begin{eqnarray*}
        \inf_{x \in B(x^*,\frac{b}{2})} \injR(x) \geq \inf_{x \in B(x^*,\frac{b^*}{2})} \injR(x) \geq \frac{b^* c c_3}{2}  \geq \frac{b c c_3}{2},
    \end{eqnarray*}
    which is the desired assertion.
    \hfill$\Box$
\end{proof}
Hereinafter, we assume that $b$ satisfies $b \leq b^* $ where $b^*$ is the constant in Lemma~\ref{lemm inf inj}.
Using \eqref{inf inj is positive}, we can prove that $\dist (x_k, x_{k+1}) \leq c \|s_k\|$ holds for each $k$.
\begin{lemma} \label{lemma14 prep}
    Under $x_{k} \in B(x^*,\frac{b}{2})$, $\dist (x_k, x_{k+1}) \leq c \|s_k\|$ holds.
\end{lemma}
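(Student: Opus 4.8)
The plan is to produce the chain $\dist(x_k,x_{k+1})\le c\,\|R^{-1}_{x_k}(x_{k+1})\|=c\|s_k\|$, in which the equality is the genuinely non-Euclidean step: it holds precisely when $s_k$ lies inside the injectivity-radius ball at $x_k$, so that the inverse retraction recovers it exactly. If the $k$-th iteration is unsuccessful, then $x_{k+1}=x_k$ and the claim is trivial with $\dist(x_k,x_{k+1})=0$, so I only need to treat the case $x_{k+1}=R_{x_k}(s_k)$. I first bound $\|s_k\|$. Since $x_k\in B(x^*,\tfrac{b}{2})$, Lemma~\ref{Lemma 5.1.3} gives $\|s_k\|\le c_3\|R^{-1}_{x_k}(\bxk)\|$. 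The upper estimate of Lemma~\ref{lemma inv retraction}, applied with $(x,y)=(x_k,\bxk)$ — legitimate because $x_k\in B(x^*,b)$ and $\bxk\in B(x^*,b)$, the latter being verified inside the proof of Lemma~\ref{Lemma 5.1.3} — yields $\|R^{-1}_{x_k}(\bxk)\|\le c\,\dist(x_k,\bxk)$. Because $\bxk$ is a closest point of $X^*$ to $x_k$ and $x^*\in X^*$, we have $\dist(x_k,\bxk)=\Dist(x_k,X^*)\le\dist(x_k,x^*)\le\tfrac{b}{2}$. Chaining these inequalities produces the key bound $\|s_k\|\le\tfrac{bcc_3}{2}$.

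Next I would locate $s_k$ within the injectivity radius. Under the standing assumption $b\le b^*$, Lemma~\ref{lemm inf inj} gives $\injR(x_k)\ge\inf_{x\in B(x^*,b/2)}\injR(x)\ge\tfrac{bcc_3}{2}\ge\|s_k\|$, so $s_k\in B^{T_{x_k}\mani}(\injR(x_k))$, the ball on which $R_{x_k}$ is a diffeomorphism. Consequently $R^{-1}_{x_k}(x_{k+1})=R^{-1}_{x_k}(R_{x_k}(s_k))=s_k$, and applying the upper bound of Lemma~\ref{lemma inv retraction} to the pair $(x_k,x_{k+1})$ closes the argument, giving $\dist(x_k,x_{k+1})\le c\,\|R^{-1}_{x_k}(x_{k+1})\|=c\|s_k\|$.

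The main obstacle is precisely the identity $R^{-1}_{x_k}(x_{k+1})=s_k$. In Euclidean space this is automatic, since there $x_{k+1}-x_k=s_k$; on a manifold, however, the inverse retraction returns the original tangent vector only when the step lies within the injectivity radius, which is exactly why the explicit norm bound $\|s_k\|\le\tfrac{bcc_3}{2}$ together with Lemma~\ref{lemm inf inj} is indispensable. A secondary technical point is the applicability of Lemma~\ref{lemma inv retraction} to $(x_k,x_{k+1})$, namely that $x_{k+1}=R_{x_k}(s_k)$ lies in the coordinate chart around $x_k$ carved out by $\injR(x_k)$, on which the metric-eigenvalue bounds $\lambda_{\min},\lambda_{\max}$, and hence the distance/coordinate inequality underlying Lemma~\ref{lemma inv retraction}, remain valid; this is again furnished by the inequality $\|s_k\|\le\injR(x_k)$ established above.
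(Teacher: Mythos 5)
Your proposal is correct and follows essentially the same route as the paper: bound $\|s_k\|\le c_3\|R^{-1}_{x_k}(\bxk)\|\le cc_3\,\dist(x_k,\bxk)\le \tfrac{bcc_3}{2}\le \injR(x_k)$ via Lemma~\ref{Lemma 5.1.3}, Lemma~\ref{lemma inv retraction}, and Lemma~\ref{lemm inf inj}, then conclude $R^{-1}_{x_k}(x_{k+1})=s_k$ and apply \eqref{dist and inv retraction}. Your explicit handling of the unsuccessful case and your remark on the applicability of Lemma~\ref{lemma inv retraction} to the pair $(x_k,x_{k+1})$ are minor additions in care that the paper leaves implicit.
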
 
\begin{proof}
 Since it holds that 
    \begin{alignat}{2}
        & \; \; \; \; \; \|s_k\|_{x_k} \notag \\
        & \leq c_3 \| R^{-1}_{x_k}(\overline{x_k}) \| 
        &\quad& (\text{by \eqref{Lemma 5.1.3-1}}) \notag \\
        &\leq c c_3 \dist(x_k,\overline{x_k}) 
        &\quad& (\text{by Lemma~\ref{lemma inv retraction}}) \notag \\
        &\leq c c_3 \dist(x_k,x^*) \notag \\
        &\leq \frac{b c c_3}{2} 
        &\quad& (\text{by } x_k \in B\left(x^*,\frac{b}{2}\right))  \notag \\
        &\leq \inf_{x \in B\left(x^*,\frac{b}{2}\right)} \injR (x) 
        &\quad& (\text{by \eqref{inf inj is positive} in Lemma~\ref{lemm inf inj}})\notag \\
        &\leq \injR (x_k) 
        &\quad& (\text{by } x_k \in B\left(x^*,\frac{b}{2}\right)), \notag
    \end{alignat}
we find that $x_{k+1} = R_{x_k}(s_k) \in R(\injR (x_k))$ and thus 
by \eqref{dist and inv retraction}, $\dist (x_k, x_{k+1}) \leq c \|s_k\|$ holds.
The proof is complete.

\hfill$\Box$
\end{proof}

    \begin{lemma} \label{sufficiently small r}
        \Add{Suppose that Assumption~\ref{Assumption for local zero} holds}
        and let $r:=\min{ \left\{ \frac{b}{2+4c^2 c_3}, \frac{1}{2 c_5}\right\} } $. If $x_0 \in B(x^*,r)$ and every iteration is successful, then
        $x_k \in B(x^*,\frac{b}{2})$ holds for all $k \geq 0 $.\\
    \end{lemma}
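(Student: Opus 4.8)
The plan is to prove the assertion by induction on $k$, combining the triangle inequality along the polygonal path of iterates with the geometric decay of $\Dist(x_k,X^*)$ that the quadratic-reduction estimate produces once we know the iterates remain in $B(x^*,\frac{b}{2})$. The hypothesis that every iteration is successful is used to guarantee $x_{i+1}=R_{x_i}(s_i)$ for all $i$, which is what makes the step-size lemmas applicable at each index.

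First I would dispose of the base case: since $c,c_3>0$ we have $2+4c^2c_3\ge 2$, so $r\le \frac{b}{2+4c^2c_3}\le \frac{b}{2}$, and thus $x_0\in B(x^*,r)\subseteq B(x^*,\frac{b}{2})$. For the inductive step I would assume $x_0,x_1,\dots,x_k\in B(x^*,\frac{b}{2})$ and show $x_{k+1}\in B(x^*,\frac{b}{2})$. Write $d_i:=\Dist(x_i,X^*)$. The crucial observation is that under this hypothesis all four helper lemmas become legitimate on the indices $0,\dots,k$. For each $i\in\{0,\dots,k\}$, Lemma~\ref{lemma14 prep} gives $\dist(x_i,x_{i+1})\le c\|s_i\|$, Lemma~\ref{Lemma 5.1.3} gives $\|s_i\|\le c_3\|R^{-1}_{x_i}(\overline{x_i})\|$, and Lemma~\ref{lemma inv retraction} gives $\|R^{-1}_{x_i}(\overline{x_i})\|\le c\,\dist(x_i,\overline{x_i})=c\,d_i$; chaining these yields $\dist(x_i,x_{i+1})\le c^2c_3\,d_i$.

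Next I would sum the $d_i$. By Lemma~\ref{ quadratic reduction of distance}, valid for $1\le i\le k$ under the induction hypothesis, we have $d_i\le c_5 d_{i-1}^2$. Since $d_0\le \dist(x_0,x^*)\le r\le \frac{1}{2c_5}$, the quantity $q_i:=c_5 d_i$ satisfies $q_0\le \frac12$ and $q_i\le q_{i-1}^2$; a one-line sub-induction then gives $q_i\le q_0\le \frac12$ and hence $q_i\le \frac12 q_{i-1}$, so $d_i\le 2^{-i}d_0$. Summing the geometric series, $\sum_{i=0}^{k}d_i\le 2d_0\le 2r$. Assembling the pieces via the triangle inequality,
\begin{eqnarray*}
\dist(x_{k+1},x^*) &\le& \dist(x_0,x^*)+\sum_{i=0}^{k}\dist(x_i,x_{i+1}) \\
&\le& r + c^2 c_3 \sum_{i=0}^{k} d_i \\
&\le& r + 2c^2 c_3\, r = (1+2c^2c_3)\,r \le \frac{b}{2},
\end{eqnarray*}
where the final inequality uses $r\le \frac{b}{2+4c^2c_3}$ calibrated so that $(1+2c^2c_3)r\le \frac{b}{2}$. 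This gives $x_{k+1}\in B(x^*,\frac{b}{2})$ and closes the induction.

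The main obstacle is the nested nature of the induction: Lemmas~\ref{lemma inv retraction}, \ref{Lemma 5.1.3}, \ref{ quadratic reduction of distance}, and \ref{lemma14 prep} all presuppose that the iterates already lie in $B(x^*,\frac{b}{2})$, which is exactly the conclusion we are after. The care therefore lies in ordering the argument so that at step $k+1$ the induction hypothesis has already placed $x_0,\dots,x_k$ in the ball, legitimizing every estimate on indices $\le k$. The two components of $r$ play distinct roles—$r\le\frac{1}{2c_5}$ is what turns the quadratic reduction into summable geometric decay, while $r\le\frac{b}{2+4c^2c_3}$ is tuned precisely so that the total accumulated displacement does not exceed $\frac{b}{2}$.
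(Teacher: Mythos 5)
Your proof is correct and follows essentially the same route as the paper's: induction on $k$, the chain $\dist(x_i,x_{i+1})\le c\|s_i\|\le c^2c_3\,\Dist(x_i,X^*)$ via Lemmas~\ref{lemma14 prep}, \ref{Lemma 5.1.3}, and \ref{lemma inv retraction}, geometric summability of $\Dist(x_i,X^*)$ from the quadratic reduction together with $r\le\frac{1}{2c_5}$, and the calibration $(1+2c^2c_3)r\le\frac{b}{2}$. The only cosmetic difference is that you relax the paper's doubly-exponential bound $\Dist(x_l,X^*)\le r(\frac{1}{2})^{2^l-1}$ to the simple geometric bound $d_i\le 2^{-i}d_0$, which suffices for the same final estimate.
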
    
    \begin{proof}
When $k=0$, $x_0 \in B(x^*,\frac{b}{2})$ clearly holds since $r \leq \frac{b}{2}$ by definition. 
\tred{In what follows, we show the assertion for $k\ge 1$ by 
  induction.}
    We first show $ x_1 \in B(x^*,\frac{b}{2})$.
\tred{Note that} 
    \begin{alignat}{2}
     & \; \; \; \; \; \text{dist}(x_1,x^*) \notag \\
     &\leq  \text{dist}(x_0,x^*)+\text{dist}(x_0,x_1) \notag \\
     &\leq  \text{dist}(x_0,x^*) + c \| s_0 \| \notag
     &\quad&  (\text{by }  \text{Lemma~\ref{lemma inv retraction} and } s_0 = R_{x_0}^{-1}(x_1).) \\
     & \leq  \text{dist}(x_0,x^*)+c^2 c_3 \text{Dist}(x_0,X^*) \notag 
     &\quad& (\text{by \eqref{Lemma 5.1.3-1} and Lemma~\ref{lemma inv retraction} })\\
     & \leq  (1+c^2 c_3) \text{dist}(x_0,x^*) 
     &\quad& (\text{by } \Dist(x_0,X^*) \leq \dist(x_0,x^*)) \label{align:0605-1}
    \end{alignat}	
          which together with 
 $$ (1+c^2 c_3) \text{dist}(x_0,x^*)  \leq (1+2c^2 c_3) r  \leq\frac{1+2c^2 c_3}{2+4 c^2 c_3} b = \frac{b}{2}$$
          implies
 $x_1 \in B(x^*,\frac{b}{2})$ .     
    
 \tred{Next, \tred{we prove} that $x_{k+1} \in B(x^*,\frac{b}{2})$ for each $k \geq 1$ by supposing that $x_l \in B(x^*,\frac{b}{2})$ \; ($l=0,1,\dots,k)$ holds with some $k \geq \tred{1}$.} 
\tred{By this assumption and Lemma \ref{ quadratic reduction of distance}, we have}
    \begin{eqnarray}
     \text{Dist}(x_l,X^*) \leq c_5 \text{Dist}(x_{l-1},X^*)^2 \leq \dots \leq c_5^{2^l-1} \text{Dist}(x_0,X^*)^{2^l}.\label{ dist bound by pow of 2-1}
    \end{eqnarray}
    Moreover, as $c_5\le \frac{1}{2r}$ by the choice of $r$ and $x_0\in B(x^{\ast},r)$, 
$$
c_5^{2^l-1} \text{Dist}(x_0,X^*)^{2^l}\le 
\left(\frac{1}{2}\right)^{2^l-1}\frac{r^{2^l}}{r^{2^l-1}}
=r\left(\frac{1}{2}\right)^{2^l-1},$$ 
which along with \eqref{ dist bound by pow of 2-1} implies
\begin{equation}
     \text{Dist}(x_l,X^*) \leq r \left(\frac{1}{2}\right)^{2^l-1}\label{ dist bound by pow of 2}
\end{equation}
for each $l=0,1,2,\ldots,k$.
Let us upper-bound $  \text{dist}(x_{k+1},x^*)$ 
    \tred{by the following two-steps:
First, applying the triangle inequality and Lemma~\ref{lemma14 prep} to $\text{dist}(x_{k+1},x^*)$ successively,} we have
    \begin{align}
            \text{dist}(x_{k+1},x^*) &\leq  \dist(x_k,x^*)+\dist(x_k,x_{k+1}) \notag\\
                                          &\leq \text{dist}(x_k,x^*) + c \| s_k \|_{\tblue{x_k}} \notag\\
                                          &\leq  \text{dist}(x_1,x^*)+c \sum_{l=1}^{k} \| s_l \|_{\tblue{x_l}}. \label{al:0605-1}  
 \end{align}
Second, \eqref{al:0605-1} is further bounded as follows:
\begin{align}
\eqref{al:0605-1}
&\le (1+c^2 c_3)r +c \sum_{l=1}^{k} \| s_l \|_{\tblue{x_l}}\notag\\
&\le (1+c^2 c_3)r +c c_3\sum_{l=1}^{k}R_{x_l}^{-1}(\overline{x_l})
\notag\\
&\le (1+c^2 c_3)r +c^2 c_3 \sum_{l=1}^{k} \text{Dist}(x_l,X^*) \notag\\
                       &\leq  (1+c^2 c_3)r + c^2 c_3 r \sum_{l=1}^{k} \left(\frac{1}{2} \right)^{2^l-1},\notag
\end{align}
where 
the first inequality follows from \eqref{align:0605-1}
and the second one does from Lemma\,\ref{Lemma 5.1.3} with $x=x_l$ and 
$x_{l}\in B(x^*,\frac{b}{2})$. 
Moreover, the third and last ones are implied by Lemma\,\ref{lemma inv retraction}
with $x=x_{l}\in B(x^{*},\frac{b}{2})$ and \eqref{ dist bound by pow of 2}, respectively.

Consequently, we have
\begin{equation}
\dist(x_{k+1},x^{\ast})\le 
(1+c^2 c_3)r + c^2 c_3 r \sum_{l=1}^{k} \left(\frac{1}{2} \right)^{2^l-1}.
\label{eq:0605-3}
\end{equation}
 Finally, by $ 2^{l}-1 \geq l$ for $ l \geq 1 $, 
    \begin{eqnarray*}
     \sum_{l=1}^{k} \left(\frac{1}{2} \right)^{2^l-1} 
     &\leq&  \frac{1}{2} + \left(\frac{1}{2}\right)^2 + \dots + \left(\frac{1}{2}\right)^k \notag \\
     &=& 1-\left(\frac{1}{2}\right)^k \leq 1.\notag
    \end{eqnarray*}
    Hence, 
    from this fact and \eqref{eq:0605-3},
    $\text{dist}(x_{k+1},x^*) \leq (1+2c^2c_3)r \leq \frac{b}{2}$ holds and thus $x_{k+1} \in B(x^*,\frac{b}{2})$. 
    The proof is completed.
    \hfill$\Box$
    \end{proof}

    \begin{theorem}\label{local zero theorem}
        Suppose that Assumptions \ref{Assumption for local zero} holds and let $r>0$ be the same as in Lemma \ref{sufficiently small r}. Moreover, assume $x_0 \in B(x^*, r) $.
        Then, $ \{\Dist(x_k,X^*)\}$ converges to $0$ quadratically and furthermore, $\{ x_k \}$ converges to some $ \hat{x} \in B(x^*,\frac{b}{2})$.
    \end{theorem}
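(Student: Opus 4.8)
The plan is to first pin down the global behaviour of the iterate sequence and then read off both assertions from it. The key preliminary fact I would establish is that, whenever $x_0\in B(x^*,r)$, \emph{every} iterate satisfies $x_k\in B(x^*,\frac b2)$ and \emph{every} iteration is successful. This is precisely the hypothesis demanded by Lemma~\ref{sufficiently small r} (which assumes all iterations successful) and, simultaneously, by Lemma~\ref{ for large k, kepp to be successful} (which assumes all iterates lie in $B(x^*,\min\{\frac b2,r^*\})$ with $\dist(x_k,x^*)<c^*$); the two lemmas are therefore mutually circular and cannot be chained directly. I would break the circularity by a single joint induction on $k$, proving at once that $x_k\in B(x^*,\frac b2)$ and that the $k$-th iteration is successful. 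Since the standing reductions give $\frac b2\le r^*$ and $\frac b2<c^*$, membership in $B(x^*,\frac b2)$ alone already forces $x_k\in B(x^*,\min\{\frac b2,r^*\})$ and $\dist(x_k,x^*)<c^*$. For the base case, $x_0\in B(x^*,r)\subseteq B(x^*,\frac b2)$, so the single-iterate estimate established inside the proof of Lemma~\ref{ for large k, kepp to be successful} yields $\rho_0\ge\eta$. For the inductive step, successfulness of iterations $0,\dots,k-1$ means $s_l=R_{x_l}^{-1}(x_{l+1})$ for $l\le k-1$, which is exactly what the estimate in the proof of Lemma~\ref{sufficiently small r} needs in order to conclude $x_k\in B(x^*,\frac b2)$; once $x_k$ lies in this ball, Lemma~\ref{ for large k, kepp to be successful} again gives $\rho_k\ge\eta$.

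With this in hand, quadratic convergence is immediate. Since $x_{k-1},x_k\in B(x^*,\frac b2)$ for every $k\ge1$, Lemma~\ref{ quadratic reduction of distance} gives $\Dist(x_k,X^*)\le c_5\,\Dist(x_{k-1},X^*)^2$. The superexponential bound~\eqref{ dist bound by pow of 2}, namely $\Dist(x_k,X^*)\le r(\tfrac12)^{2^k-1}$, already shows $\Dist(x_k,X^*)\to0$, so together with the one-step recursion this is exactly Q-quadratic convergence of $\{\Dist(x_k,X^*)\}$ to $0$.

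It remains to prove that $\{x_k\}$ itself converges. Here I would show that $\{x_k\}$ is Cauchy in the Riemannian distance. Combining Lemma~\ref{lemma14 prep}, the bound~\eqref{Lemma 5.1.3-1} of Lemma~\ref{Lemma 5.1.3}, and Lemma~\ref{lemma inv retraction} gives, for each $k$,
\[
\dist(x_k,x_{k+1})\le c\|s_k\|\le c\,c_3\|R_{x_k}^{-1}(\bxk)\|\le c^2 c_3\,\Dist(x_k,X^*).
\]
By~\eqref{ dist bound by pow of 2} the right-hand side is summable, so for $m>k$ the triangle inequality gives $\dist(x_k,x_m)\le c^2 c_3\sum_{j\ge k}\Dist(x_j,X^*)\to0$ as $k\to\infty$; hence $\{x_k\}$ is Cauchy. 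Since $\mani$ is complete, the sequence converges to some $\hat x$, and because each $x_k$ lies in the closed ball $B(x^*,\frac b2)$, the limit satisfies $\hat x\in B(x^*,\frac b2)$.

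The main obstacle I expect is the first paragraph: untangling the circular hypotheses of Lemmas~\ref{sufficiently small r} and~\ref{ for large k, kepp to be successful} by a careful joint induction, and verifying that the standing smallness reductions on $b$ are precisely what make membership in $B(x^*,\frac b2)$ alone suffice both for successfulness and for the distance estimates used in the induction step. The remaining two steps are essentially bookkeeping over inequalities already established.
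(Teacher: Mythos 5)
Your proposal is correct and follows essentially the same route as the paper: containment of all iterates in $B(x^*,\tfrac{b}{2})$ via Lemma~\ref{sufficiently small r}, the quadratic rate from repeated application of Lemma~\ref{ quadratic reduction of distance}, and a Cauchy argument built from Lemma~\ref{lemma14 prep}, \eqref{Lemma 5.1.3-1}, Lemma~\ref{lemma inv retraction}, and the doubly exponential bound \eqref{ dist bound by pow of 2}. The one place you go beyond the paper is the joint induction showing simultaneously that each iterate stays in the ball and that each iteration is successful: the paper's proof simply invokes Lemma~\ref{sufficiently small r} without verifying its hypothesis that every iteration is successful, so your explicit untangling of the circularity between Lemmas~\ref{sufficiently small r} and~\ref{ for large k, kepp to be successful} is a welcome tightening of the argument rather than a deviation from it.
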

    
    \begin{proof} 
    The first assertion can be verified as follows. By the assumptions and Lemma~\ref{sufficiently small r}, $\{x_k\} \subset B(x^*,\frac{b}{2}) $ holds. Thus, we can repeatedly apply Lemma~\ref{ quadratic reduction of distance} and consequently, we conclude that $ \{x_k\}$ quadratically converges to $0$.
    Next, we show the second claim. Since $(\mathcal{M},\langle \cdot,\cdot \rangle)$ is a complete Riemannian manifold and thus it is a complete metric space with respect to $\dist (\cdot,\cdot) $, it suffices to show that $ \{x_k \} $ is a Cauchy sequence.
    
    For arbitrary $m > n$, we have 
    \begin{alignat}{2}	
     & \; \; \; \; \; \text{dist}(x_m,x_n) \notag \\
     & \leq  \sum_{l=n}^{m-1} \text{dist}(x_l,x_{l+1}) \notag \\
     & \leq c \sum_{l=n}^{m-1} \| s_l \|_{\tred{x_l}} \notag 
     &\quad& (\text{by $\{x_k\} \subset B\left(x^*,\frac{b}{2}\right) $ and Lemma~\ref{lemma inv retraction}}) \\
     &\leq   c c_3 \sum_{l=n}^{m-1} \| R^{-1}_{x_l} (\overline{x_l})  \| \notag 
     &\quad&  (\text{by \eqref{Lemma 5.1.3-1}})\\
     & \leq  c^2 c_3 \; \sum_{l=n}^{m-1} \text{Dist}(x_l,X^*) \notag
     &\quad& (\text{by Lemma~\ref{lemma inv retraction}}) \\
     & \leq  c^2 c_3 r \sum_{l=n}^{m-1} \left(\frac{1}{2}\right)^{2^l-1} .\notag
     &\quad& (\text{by \eqref{ dist bound by pow of 2} })
    \end{alignat}
     Using
    $
     \sum_{l=n}^{m-1} (\frac{1}{2})^{2^l-1}  \leq  \sum_{l=n}^{\infty} (\frac{1}{2})^{2l-1} = \frac{1}{3}\left(\frac{1}{2}\right)^{2n-3}
    $, we obtain
    \begin{eqnarray*}
     \text{dist}(x_m,x_n) & \leq & \frac{c^2 c_3 r}{3}\left(\frac{1}{2} \right)^{2n-3}.
    \end{eqnarray*}
    This inequality indicates that $\{x_k\}$ is a Cauchy sequence and consequently, the second claim has been proved. The proof is completed.
    \hfill$\Box$
    \end{proof}
    
    From Theorem~\ref{local zero theorem}, the RLM has a local quadratic convergence property for zero-residual cases. Next, we study the local behavior of our RLM when \eqref{P} is nonzero-residual.

    \subsection{Linear convergence for nonzero-residual cases } \label{local nonzero}
Recall the definitions of $f^{\ast}$ and $X^{\ast}$ in the beginning of subsection\,\ref{sec:notations}.    
In this subsection, we consider the nonzero-residual case, namely, $f^*>0$.

 Besides Assumption\,\ref{Assumption for local zero}, 
\tblue{we suppose that $\text{flag}^{\text{nz}} = \text{true} $ in Algorithm~\ref{alg1} and} further make the following assumptions on $x^* \in X^*$.
    \begin{assume}
         \label{Assumption for local non-zero} \;  
    $\grad f$ is $L_0$-Lipschitz continuous on $B(x^*,b)$.
    \end{assume}
    It is shown in Corollary 10.45 of \cite{Boumal}  that
    this assumption is satisfied when $ f$ is twice continuously differentiable on $ B(x^*,\frac{b}{2})$.
    
We remark that Assumption~\ref{Assumption for local zero} (c) can be removed if the problem is globally nonzero-residual, namely, the global optimal value is larger than 0.
Indeed, as discussed in Remark~\ref{boundMu},
by combining $ \| J(x) \| \leq K$ of \eqref{upperK}  with Assumption~\ref{Assumption for local non-zero} and using an algorithmic parameter \Add{$\beta>1$ and  $ \kappa$ defined by \eqref{defi of kappa} with $L=L_0 $ and $M=K$}, we derive an upper bound of $ \{\mu_k\}$ by
    \begin{eqnarray*}
        \tred{\mumaxnz}:= \frac{\beta \kappa}{2 \underset{x \in \mani}{\min}f(x)}.
    \end{eqnarray*}
    
    
    We begin by introducing a lemma similar to Lemma~\ref{Lemma 5.1.3}.
    \begin{lemma}\label{lem15}
    Suppose $x_k \in B(x^*,\frac{b}{2})$ with some $k$. Under Assumptions \ref{Assumption for local zero} and \ref{Assumption for local non-zero},
    \begin{eqnarray}
     && \| s_k \| \leq \hat{c}_3 \| R^{-1}_{x_k}(\overline{x_k}) \|,  \label{Lemma14 s_k} \\
     && \| J_k s_k+F(x_k)\| - \sqrt{2{f}^*} \leq \hat{c}_4 \| R^{-1}_{x_k}(\overline{x_k}) \|^2 \label{Lemma14 J_k}
    \end{eqnarray}
    hold where
    \Add{$ f^* = f(\overline{x_k}) $,}
    $\hat{c}_3 := \frac{c L_0}{2\mu_{\min}f^*} $, and
    $\hat{c}_4 := \frac{\tred{\mumaxnz}bcL}{2}+ \sqrt{\tred{\mumaxnz}}L + c_1 + \frac{\sqrt{2f^*}\tred{\mumaxnz}}{2} $.
    \end{lemma}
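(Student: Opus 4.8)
The plan is to prove the two inequalities separately, paralleling the proof of Lemma~\ref{Lemma 5.1.3} while carefully tracking the fact that now $F(\bxk)\neq 0$; indeed $\bxk\in X^*$ gives $\grad f(\bxk)=0_{\bxk}$ and $\|F(\bxk)\|=\sqrt{2f^*}$. Throughout I abbreviate $\delta:=\|R^{-1}_{x_k}(\bxk)\|$, and I note that from $x_k\in B(x^*,\tfrac b2)$ and the definition of $\bxk$ one has $\delta\le c\,\dist(x_k,\bxk)\le c\,\dist(x_k,x^*)\le \tfrac{cb}{2}$ by Lemma~\ref{lemma inv retraction}. The case $\delta=0$ forces $x_k=\bxk\in X^*$, hence $s_k=0_{x_k}$ and both claims are immediate, so I may assume $\delta>0$.

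For the bound \eqref{Lemma14 s_k} on $\|s_k\|$, the subproblem-value estimate used in Lemma~\ref{Lemma 5.1.3} is no longer useful, because $\theta^k(R^{-1}_{x_k}(\bxk))$ now contains the $O(1)$ quantity $\|F(\bxk)\|^2=2f^*$. Instead I would invoke \eqref{s_k grad rela} of Lemma~\ref{lemm for two inequs}, which gives $\|s_k\|\le \|\grad f(x_k)\|/\lambda_k$. Since $\grad f(\bxk)=0_{\bxk}$, the $L_0$-Lipschitz continuity of $\grad f$ (Assumption~\ref{Assumption for local non-zero}) yields $\|\grad f(x_k)\|\le L_0\,\dist(x_k,\bxk)\le cL_0\,\delta$ by Lemma~\ref{lemma inv retraction}. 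Bounding the denominator by $\lambda_k=\mu_k\|F(x_k)\|^2\ge \mu_{\min}\|F(x_k)\|^2\ge 2\mu_{\min}f^*$, where $\|F(x_k)\|^2=2f(x_k)\ge 2f^*$, and combining the two estimates gives $\|s_k\|\le \frac{cL_0}{2\mu_{\min}f^*}\delta=\hat c_3\delta$.

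For \eqref{Lemma14 J_k}, I would keep the minimizer chain $\|J_ks_k+F(x_k)\|^2\le\theta^k(s_k)\le\theta^k(R^{-1}_{x_k}(\bxk))=\|F(x_k)+J_kR^{-1}_{x_k}(\bxk)\|^2+\lambda_k\delta^2$ from \eqref{original theta_k_s}. Decomposing $F(x_k)+J_kR^{-1}_{x_k}(\bxk)=F(\bxk)+\big[J_kR^{-1}_{x_k}(\bxk)-(F(\bxk)-F(x_k))\big]$ and applying Assumption~\ref{Assumption for local zero}(a) bounds its norm by $\sqrt{2f^*}+c_1\delta^2$, while Lemma~\ref{local Lipschitz like} with $(x,y)=(\bxk,x_k)$ gives $\|F(x_k)\|\le\sqrt{2f^*}+L\delta$, so $\lambda_k\le\mumaxnz(\sqrt{2f^*}+L\delta)^2$ by Assumption~\ref{Assumption for local zero}(c). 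Hence $\|J_ks_k+F(x_k)\|^2\le(\sqrt{2f^*}+c_1\delta^2)^2+\mumaxnz(\sqrt{2f^*}+L\delta)^2\delta^2$, and it remains to check this is at most $(\sqrt{2f^*}+\hat c_4\delta^2)^2$, whence \eqref{Lemma14 J_k} follows by taking square roots (legitimate since $\sqrt{2f^*}+\hat c_4\delta^2>0$). Expanding, cancelling $2f^*$ and dividing by $\delta^2$, this reduces to matching three contributions: the constants $2\sqrt{2f^*}c_1+2\mumaxnz f^*$ cancel exactly against the $c_1$- and $\tfrac{\sqrt{2f^*}\mumaxnz}{2}$-parts of $2\sqrt{2f^*}\hat c_4$; the linear term $2\mumaxnz\sqrt{2f^*}L\delta$ is absorbed by the $\tfrac{\mumaxnz bcL}{2}$-part of $\hat c_4$ using $\delta\le\tfrac{cb}{2}$; and the quadratic term $(c_1^2+\mumaxnz L^2)\delta^2$ is dominated by $\hat c_4^2\delta^2$ since $\hat c_4\ge \sqrt{\mumaxnz}L+c_1$.

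The main obstacle is the first inequality: the subproblem value cannot control $\|s_k\|$ in the nonzero-residual regime, so the argument must route through $\grad f(\bxk)=0_{\bxk}$ and the gradient-Lipschitz estimate, and it crucially lower-bounds the residual by $\|F(x_k)\|^2\ge 2f^*$ to produce the factor $f^*$ in the denominator of $\hat c_3$; this is precisely where the nonzero-residual structure (with $x^*$ a local minimizer, so $f\ge f^*$ nearby) is used and the proof departs essentially from that of Lemma~\ref{Lemma 5.1.3}. The remainder is the routine but careful bookkeeping in the second inequality, where each power of $\delta$ has been matched to the correspondingly engineered summand of $\hat c_4$.
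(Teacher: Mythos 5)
Your proposal is correct and follows essentially the same route as the paper's proof: the bound on $\|s_k\|$ via \eqref{s_k grad rela}, the vanishing of $\grad f(\bxk)$, the Lipschitz estimate from Assumption~\ref{Assumption for local non-zero}, and the lower bound $\lambda_k\ge 2\mu_{\min}f^*$; and for \eqref{Lemma14 J_k}, the minimizer chain, the decomposition isolating $F(\bxk)$, the upper bound $\lambda_k\le\mumaxnz(L\|R^{-1}_{x_k}(\bxk)\|+\sqrt{2f^*})^2$, and the absorption of the cubic term via $\|R^{-1}_{x_k}(\bxk)\|\le\frac{bc}{2}$ all match the paper's argument term for term. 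The only (harmless) additions are your separate treatment of the degenerate case $R^{-1}_{x_k}(\bxk)=0_{x_k}$ and your explicit flagging of the implicit step $f(x_k)\ge f^*$, which the paper also uses without comment.
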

    
    \begin{proof}
We first prove \eqref{Lemma14 J_k}. It follows that $ \theta^k(s_k) \leq \theta^k( R^{-1}_{x_k}(\overline{x_k}))$ from \eqref{ s_k = argmin theta}. 
    Moreover, \eqref{original theta_k_s} immediately yields $ \| J_k s_k + F(x_k) \|^2 \leq \theta^k(s_k)$. Therefore, we have
    \begin{eqnarray}\label{Lemma 5.2.1 (i)}
     \| J_k s_k + F(x_k) \|^2 &\leq& \theta^k(s_k)  \notag \\
     & \leq & \theta^k( R^{-1}_{x_k}(\overline{x_k})) \notag \\
     &=& \|F(x_k)+ J_k R^{-1}_{x_k}(\overline{x_k}) \|^2 + \lambda_k \| R^{-1}_{x_k}(\overline{x_k}) \|^2 .
    \end{eqnarray}
    
    By the same argument as in the proof for Lemma \ref{Lemma 5.1.3}, we have $\overline{x_k} \in B(x^*,b)$.
    Hence, from Assumption~\ref{Assumption for local zero}(a), we obtain
    \begin{eqnarray}
     \| F(x_k) +J_k R^{-1}_{x_k}(\overline{x_k}) \|^2 &\leq& (\| J_k R^{-1}_{x_k}(\overline{x_k}) - (F(\overline{x_k})-F(x_k)) \| +\| F(\overline{x_k}) \| )^2 \notag \\
     & \leq & (c_1 \| R^{-1}_{x_k}(\overline{x_k}) \|^2 + \sqrt{2 f^*})^2. \label{lemma 5.4 (*)}
    \end{eqnarray}
    By combining \eqref{lemma 5.4 (*)} with (\ref{Lemma 5.2.1 (i)}), we get
    \begin{eqnarray}\label{Lemma 5.2.1 (ii)}
     \| J_k s_k + F(x_k) \|^2 \leq (c_1 \| R^{-1}_{x_k}(\overline{x_k}) \|^2 + \sqrt{2 f^*})^2 + \lambda_k \| R^{-1}_{x_k}(\overline{x_k}) \|^2 .
    \end{eqnarray}
    Furthermore, we have $ \sqrt{\frac{\lambda_k}{\mu_k}} = \| F(x_k) \| \leq \| F(x_k)-F(\overline{x_k}) \| + \| F(\overline{x_k}) \| \leq L \; \| R^{-1}_{x_k}(\overline{x_k})\| + \sqrt{2f^*}$ from Lemma\,\tred{\ref{local Lipschitz like}}. Consequently, with $ \mu_k \leq \mumaxnz$, $\lambda_k $ can be bounded from the above as follows:
    \begin{eqnarray}\label{Lemma 5.2.1 (iii)}
     \lambda_k \leq \tred{\mumaxnz}\left( L^2 \; \| R^{-1}_{x_k}(\overline{x_k})\|^2 +2\sqrt{2f^*}L \| R^{-1}_{x_k}(\overline{x_k}) \| + 2 f^* \right).
    \end{eqnarray} 
    
    Applying (\ref{Lemma 5.2.1 (iii)}) to (\ref{Lemma 5.2.1 (ii)}), we find
    \begin{eqnarray}
     && \| J_k s_k + F(x_k) \|^2 \notag \\
     &\leq& (c_1 \| R^{-1}_{x_k}(\overline{x_k}) \|^2 + \sqrt{2 f^*})^2 + \tred{\mumaxnz}L^2 \; \| R^{-1}_{x_k}(\overline{x_k})\|^4 +2\sqrt{2f^*} \tred{\mumaxnz}L \| R^{-1}_{x_k}(\overline{x_k}) \|^3 + 2 f^* \tred{\mumaxnz}\| R^{-1}_{x_k}(\overline{x_k}) \|^2 \notag \\
     &=& (c_1^2 + \tred{\mumaxnz}L^2) \| R^{-1}_{x_k}(\overline{x_k}) \|^4 + 2 \sqrt{2f^*} \tred{\mumaxnz}L \| R^{-1}_{x_k}(\overline{x_k}) \|^3 + (2\sqrt{2f^*}c_1+2f^* \tred{\mumaxnz})\| R^{-1}_{x_k}(\overline{x_k}) \|^2 + 2f^* . \notag \\
     && \label{0811 (i)}
    \end{eqnarray}
Moreover, since $ \| R^{-1}_{x_k}(\overline{x_k}) \| \leq c \; \text{dist}(x_k,\overline{x_k}) \leq c \; \text{dist}(x_k,x^*) \leq \frac{bc}{2} $ holds by Lemma~\ref{lemma inv retraction} and the assumption $x_k \in B(x^*,\frac{b}{2}) $, we have
    \begin{eqnarray}
     && 2 \sqrt{2f^*} \tred{\mumaxnz}L \| R^{-1}_{x_k}(\overline{x_k}) \|^3 + (2\sqrt{2f^*}c_1+2f^* \overset{\rm{nz}}{\mu_{\max}})\| R^{-1}_{x_k}(\overline{x_k}) \|^2 \notag \\
     &\leq & (\sqrt{2f^*} \tred{\mumaxnz}bcL + 2\sqrt{2f^*}c_1+2f^*\tred{\mumaxnz}) \| R^{-1}_{x_k}(\overline{x_k}) \|^2. \label{0811 (ii)}
    \end{eqnarray}
   \tred{\eqref{0811 (i)} and \eqref{0811 (ii)} yield}
    \begin{eqnarray*}
     && \| J_ks_k + F(x_k) \|^2 \\
     &\leq& (c_1^2 + \tred{\mumaxnz}L^2) \| R^{-1}_{x_k}(\overline{x_k}) \|^4 + (\sqrt{2f^*} \tred{\mumaxnz}bcL + 2\sqrt{2f^*}c_1+2f^*\tred{\mumaxnz}) \| R^{-1}_{x_k}(\overline{x_k}) \|^2 + 2f^* \\
     &\leq& \left( \left(\frac{\tred{\mumaxnz}bcL}{2}+ \sqrt{\mumaxnz}L + c_1 + \frac{\sqrt{2f^*}\tred{\mumaxnz}}{2} \right)\| R^{-1}_{x_k}(\overline{x_k}) \|^2 + \sqrt{2f^*} \right)^2\\
     &=&\tred{\left(\hat{c}_4\| R^{-1}_{x_k}(\overline{x_k})\|^2+ \sqrt{2f^*} \right)^2}.
    \end{eqnarray*}
    Therefore, we have 
    $
     \| J_k s_k + F(x_k) \| - \sqrt{2f^*} \leq \hat{c}_4 \; \| R^{-1}_{x_k}(\overline{x_k})\|^2.
    $ 
    
    In turn, we show $\| s_k \| \leq \hat{c}_3 \| R^{-1}_{x_k}(\overline{x_k}) \|$.
    \begin{alignat}{2}
     & \; \; \; \; \; \|s_k \| \notag \\
     &\leq \frac{1}{\lambda_k} \| \text{grad}f(x_k) \| \notag 
     &\quad&  (\text{by } \eqref{s_k grad rela})\\
     & \leq  \frac{L_0}{2\mu_{\min} f^*}\text{Dist}(x_k,X^*) \notag 
     &\quad& (\text{by Assumption~\ref{Assumption for local non-zero} and } \lambda_k = \mu_k \|F(x_k)\|^2 \geq 2\mu_{\min}f^* ) \\ 
     & \leq  \frac{c L_0}{2\mu_{\min}f^*} \| R^{-1}_{x_k}(\overline{x_k}) \| = \hat{c}_3 \| R^{-1}_{x_k}(\overline{x_k}) \|. \notag 
     &\quad& (\text{by Lemma~\ref{lemma inv retraction} })
    \end{alignat}
    Hence, the proof is completed.
    \hfill$\Box$
    \end{proof}
    
    When we fix $\mu_k $ as $ \tred{\mumaxnz}$ for all $ k \in \{0,1,2,\dots\}$, every iteration is successful. We need this property for establishing the local convergence as 
    \Add{shown} below. Hereinafter we set $ \mu_k = \tred{\mumaxnz}$. Note that this is consistent with Algorithm \ref{alg1} since $ \mu_k = \tred{\mumaxnz}$ holds for all $k \in \{0,1,2,\dots\} $ by setting $ \mu_{\min} = \tred{\mumaxnz}$.
    \begin{lemma}\label{Lemma 5.2.3}
        Suppose that  Assumptions\,\ref{Assumption for local zero} and \ref{Assumption for local non-zero} hold.
        Moreover, if $ f^* < \frac{c_2^4}{8 c^{8} c_1^2}$, and $x_k,x_{k-1} \in B(x^*,\frac{b}{2})$ hold, then it follows that  
        $$ \Dist(x_k,X^*) \leq c_5 \Dist(x_{k-1},X^*),$$ 
        where 
        $$
         c_5 := \sqrt{\frac{c^4(c_1{\hat{c}_3}^2+\hat{c}_4)\left( \frac{b^2 c^2 (c_1 {\hat{c}_3}^2 + \hat{c}_4)}{4} + 2\sqrt{2f^*}\right)}{c_2^2 - 2 \sqrt{2f^*}c^4 c_1}}.$$ 
    \end{lemma}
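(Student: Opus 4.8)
The plan is to mirror the proof of Lemma~\ref{ quadratic reduction of distance} for the zero-residual case, but to carry along the extra residual term $\sqrt{2f^*}$ and, crucially, to exploit the stationarity $J(\overline{x_k})^*F(\overline{x_k})=\gradf(\overline{x_k})=0$ to control a cross term that is simply absent when $f^*=0$. Throughout I write $A:=c_1\hat{c}_3^2+\hat{c}_4$. Since we have fixed $\mu_k=\mumaxnz$, every iteration is successful, so $x_k=R_{x_{k-1}}(s_{k-1})$ and hence $s_{k-1}=R^{-1}_{x_{k-1}}(x_k)$.

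First I would establish the nonzero-residual analogue of \eqref{next point bound}, namely
\begin{equation*}
\|F(x_k)\|\le A\,\|R^{-1}_{x_{k-1}}(\overline{x_{k-1}})\|^2+\sqrt{2f^*}.
\end{equation*}
This follows from $\|F(x_k)\|\le \|J_{k-1}s_{k-1}-(F(x_k)-F(x_{k-1}))\|+\|J_{k-1}s_{k-1}+F(x_{k-1})\|$, bounding the first summand by $c_1\|s_{k-1}\|^2\le c_1\hat{c}_3^2\|R^{-1}_{x_{k-1}}(\overline{x_{k-1}})\|^2$ via Assumption~\ref{Assumption for local zero}(a) and \eqref{Lemma14 s_k}, and the second by $\sqrt{2f^*}+\hat{c}_4\|R^{-1}_{x_{k-1}}(\overline{x_{k-1}})\|^2$ via \eqref{Lemma14 J_k}.

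The heart of the argument, and the step I expect to be the main obstacle, is to estimate $\|F(x_k)-F(\overline{x_k})\|^2$ sharply. The naive bound $\|F(x_k)\|+\sqrt{2f^*}$ is too lossy, leaving a non-vanishing constant; instead I would expand
\begin{equation*}
\|F(x_k)-F(\overline{x_k})\|^2=\|F(x_k)\|^2-\|F(\overline{x_k})\|^2-2\langle F(x_k)-F(\overline{x_k}),F(\overline{x_k})\rangle,
\end{equation*}
use $\|F(\overline{x_k})\|^2=2f^*$, and bound the cross term. Setting $E:=(F(x_k)-F(\overline{x_k}))-J(\overline{x_k})R^{-1}_{\overline{x_k}}(x_k)$, Assumption~\ref{Assumption for local zero}(a) with $(x,y)=(x_k,\overline{x_k})$ gives $\|E\|\le c_1\|R^{-1}_{\overline{x_k}}(x_k)\|^2$, while $\langle J(\overline{x_k})R^{-1}_{\overline{x_k}}(x_k),F(\overline{x_k})\rangle=\langle R^{-1}_{\overline{x_k}}(x_k),J(\overline{x_k})^*F(\overline{x_k})\rangle_{\overline{x_k}}=0$ by \eqref{def of adj} and $\gradf(\overline{x_k})=0$; hence $|\langle F(x_k)-F(\overline{x_k}),F(\overline{x_k})\rangle|=|\langle E,F(\overline{x_k})\rangle|\le c_1\sqrt{2f^*}\|R^{-1}_{\overline{x_k}}(x_k)\|^2$. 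Applying Lemma~\ref{lemma inv retraction} twice yields $\|R^{-1}_{\overline{x_k}}(x_k)\|^2\le c^4\|R^{-1}_{x_k}(\overline{x_k})\|^2$ (after checking $\overline{x_k}\in B(x^*,b)$ exactly as in Lemma~\ref{Lemma 5.1.3}). Combining with Assumption~\ref{Assumption for local zero}(b) in the squared form $c_2^2\|R^{-1}_{x_k}(\overline{x_k})\|^2\le\|F(x_k)-F(\overline{x_k})\|^2$ produces
\begin{equation*}
\bigl(c_2^2-2\sqrt{2f^*}c^4c_1\bigr)\|R^{-1}_{x_k}(\overline{x_k})\|^2\le\|F(x_k)\|^2-2f^*,
\end{equation*}
and the hypothesis $f^*<\frac{c_2^4}{8c^8c_1^2}$ is exactly what makes the left-hand coefficient positive.

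Finally I would bound the right-hand side. Squaring the first-step estimate and cancelling $2f^*$ gives $\|F(x_k)\|^2-2f^*\le A\|R^{-1}_{x_{k-1}}(\overline{x_{k-1}})\|^2\bigl(A\|R^{-1}_{x_{k-1}}(\overline{x_{k-1}})\|^2+2\sqrt{2f^*}\bigr)$; then $\|R^{-1}_{x_{k-1}}(\overline{x_{k-1}})\|^2\le c^2\Dist(x_{k-1},X^*)^2$ together with $\|R^{-1}_{x_{k-1}}(\overline{x_{k-1}})\|^2\le\frac{b^2c^2}{4}$ (both from Lemma~\ref{lemma inv retraction} and $x_{k-1}\in B(x^*,\frac{b}{2})$) bounds this by $Ac^2\Dist(x_{k-1},X^*)^2\bigl(\frac{b^2c^2A}{4}+2\sqrt{2f^*}\bigr)$. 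Inserting $\|R^{-1}_{x_k}(\overline{x_k})\|^2\ge c^{-2}\Dist(x_k,X^*)^2$ on the left and rearranging yields $\Dist(x_k,X^*)^2\le c_5^2\,\Dist(x_{k-1},X^*)^2$ with precisely the stated constant $c_5$, which completes the proof.
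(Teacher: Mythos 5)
Your proposal is correct and follows essentially the same route as the paper's proof: the same chain bounding $\|F(x_k)\|$ via the previous step, the same expansion of $\|F(x_k)-F(\overline{x_k})\|^2$ with the remainder term $E$ (the paper's $r_{x_k}$), and the same crucial use of $\grad f(\overline{x_k})=0_{\overline{x_k}}$ together with \eqref{def of adj} to reduce the cross term to $-2F(\overline{x_k})^T r_{x_k}$, yielding the identical constant $c_5$. The only cosmetic difference is that you keep $\|R^{-1}_{x_k}(\overline{x_k})\|$ as the working quantity and convert to $\Dist(x_k,X^*)$ at the end, whereas the paper converts earlier; the factors of $c$ come out the same either way.
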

    
    \begin{proof}
    First, we have 
    \begin{alignat}{2}\label{Lemma 5.2.3 (i)}
     & \; \; \; \; \; c_2 \text{Dist}(x_k,X^*) \notag \\
     & \leq  c c_2 \| R^{-1}_{x_k}(\overline{x_k}) \|
     &\quad& (\text{by Lemma~\ref{lemma inv retraction}})\notag \\
     &\leq c \| F(x_k)-F(\overline{x_k}) \|
     &\quad& (\text{by Assumption~\ref{Assumption for local zero}(b)}).
    \end{alignat}
    Moreover, 
    \tred{the following inequality holds:}
    \begin{alignat}{2}
     & \; \; \; \; \; \|F(x_k) \| \notag \\
     & \leq  \| J_{k-1} s_{k-1}- (F(x_k)-F(x_{k-1}))) \| + \| J_{k-1} s_{k-1} + F(x_{k-1}) \| \notag \\
     & \leq  c_1 \| s_{k-1} \|^2 + \hat{c}_4 \| R^{-1}_{x_{k-1}} (\overline{x_{k-1}} )\|^2 + \sqrt{2f^*} \notag
     &\quad& (\text{by Assumption~\ref{Assumption for local zero}(a) and \eqref{Lemma14 J_k}}) \notag \\
     & \leq (c_1 \hat{c}_3^2 + \hat{c}_4 ) \| R^{-1}_{x_{k-1}} (\overline{x_{k-1}} )\|^2 + \sqrt{2f^*} \notag 
     &\quad& (\text{by \eqref{Lemma14 s_k}})\notag \\
     & \leq  c^2(c_1\hat{c}_3^2+\hat{c}_4)\; \text{Dist}(x_{k-1},X^*)^2 + \sqrt{2f^*} \notag
     &\quad& (\text{by Lemma~\ref{lemma inv retraction}})\notag \\
     &= A+\sqrt{2f^*}, \label{ local nonzero F(x_k) bound}
    \end{alignat}
    where $A := c^2(c_1 \hat{c}_3^2+\hat{c}_4)\; \text{Dist}(x_{k-1},X^*)^2 $. \\
    
    Next, we evaluate $\| F(x_k)-F(\overline{x_k}) \| $ by using \eqref{ local nonzero F(x_k) bound}. Then, we find 
    \begin{eqnarray}\label{Lemma 5.2.3 (ii)}
     \| F(x_k)-F(\overline{x_k}) \|^2 
     &=& \| F(x_k) \|^2 -2F(\overline{x_k})^T F(x_k) + 2f^* \notag \\
     &\leq& A^2 + 2\sqrt{2f^*}A + 4f^*-2F(\overline{x_k})^T F(x_k).
    \end{eqnarray}
    \Add{Now}, we analyze $ 4f^*-2F(\overline{x_k})^T F(x_k)$ \Add{in \eqref{Lemma 5.2.3 (ii)}}. 
    Let
    \begin{eqnarray}
        r_{x_k} := - J(\overline{x_k})R^{-1}_{\overline{x_k}}(x_k) + (F(x_k)-F(\overline{x_k}))  \notag
    \end{eqnarray}
    so as to satisfy 
    \begin{eqnarray}
        F(x_k) = F(\overline{x_k})+ J(\overline{x_k})R^{-1}_{\overline{x_k}}(x_k) + r_{x_k}. \label{ r_{x_k} }
    \end{eqnarray}
    Then, from Assumption~\ref{Assumption for local zero}(a) and Lemma~\ref{lemma inv retraction}, we have
    \begin{eqnarray}
     \| r_{x_k} \| \leq c_1 \| R^{-1}_{\overline{x_k}}(x_k) \|^2 \leq c^2 c_1 \text{Dist}(x_k,X^*)^2, \label{r_{x_k} inequ}
    \end{eqnarray}
    which yields
    \begin{alignat}{2}\label{Lemma 5.2.3. (iii)}
     & \; \; \; \; \; 4f^*-2F(\overline{x_k})^T F(x_k) \notag \\
     &= 2F(\overline{x_k})^T F(\overline{x_k})-2F(\overline{x_k})^T F(x_k) \notag \\
     &= -2F(\overline{x_k})^T (F(x_k)-F(\overline{x_k})) \notag \\
     &= -2 F(\overline{x_k})^T J(\overline{x_k}) R^{-1}_{\overline{x_k}}(x_k)-2F(\overline{x_k})^T r_{x_k} \notag
     &\quad& (\text{by \eqref{ r_{x_k} } }) \notag \\
     &= -2 \langle {J(\overline{x_k})}^* F(\overline{x_k}), R^{-1}_{\overline{x_k}}(x_k) \rangle_{\overline{x_k}} - 2F(\overline{x_k})^T r_{x_k} 
     &\quad& (\text{\tred{by \eqref{def of adj}}}) \notag \\
     &= -2 \langle \text{grad}f(\overline{x_k}), R^{-1}_{\overline{x_k}}(x_k) \rangle_{\overline{x_k}}-2F(\overline{x_k})^T r_{x_k} \notag \\
     &= -2 F(\overline{x_k})^T r_{x_k}  \notag
     &\quad& (\text{by } \text{ grad}f(\overline{x_k}) = 0_{\overline{x_k}})\notag \\
     &\leq 2 \| F(\overline{x_k}) \| \| r_{x_k} \| \notag \\
     &\leq 2 \sqrt{2f^*} \; c^2 c_1 \; \text{Dist}(x_k,X^*)^2.
     &\quad&  (\text{by \eqref{r_{x_k} inequ}})
    \end{alignat}
    
    From (\ref{Lemma 5.2.3. (iii)}) and (\ref{Lemma 5.2.3 (ii)}), we obtain
    \begin{eqnarray}\label{Lemma5.2.3 (iv)}
     \| F(x_k)-F(\overline{x_k}) \|^2 \leq A^2 + 2\sqrt{2f^*}A + 2 \sqrt{2f^*} \; c^2 c_1 \; \text{Dist}(x_k,X^*)^2 .
    \end{eqnarray}
    
    \Add{Using (\ref{Lemma5.2.3 (iv)}) in the right-hand side of (\ref{Lemma 5.2.3 (i)})}
    , we find
    \begin{eqnarray*}
     && c_2^2 \; \text{Dist}(x_k,X^*)^2 \leq c^2 \left( A^2 + 2\sqrt{2f^*}A + 2 \sqrt{2f^*} \; c^2 c_1 \; \text{Dist}(x_k,X^*)^2 \right),
    \end{eqnarray*}
    implying 
    \begin{eqnarray}
        (c_2^2 - 2 \sqrt{2f^*}c^4 c_1) \text{Dist}(x_k,X^*)^2 \leq c^2 (A+2\sqrt{2f^*})A. \label{Lemma16 last part}
    \end{eqnarray}
    Using the definition of $ A$ together with $\Dist (x_{k-1},X^*) \leq \frac{b}{2}$, \eqref{Lemma16 last part} leads to
    \begin{eqnarray*}
     &&(c_2^2 - 2 \sqrt{2f^*}c^4 c_1) \text{Dist}(x_k,X^*)^2 \\
     &\leq& c^4(c_1{\hat{c}_3}^2+\hat{c}_4)\left( \frac{b^2 c^2 (c_1 {\hat{c}_3}^2 + \hat{c}_4)}{4} + 2\sqrt{2f^*}\right) \text{Dist}(x_{k-1},X^*)^2.
    \end{eqnarray*}
    Since $ f^* < \frac{c_2^4}{8 c^{8} c_1^2}$
    holds by the assumption, we have $ c_2^2 - 2 \sqrt{2f^*}c^4 c_1 >0$. Therefore, 
    we obtain
    \begin{eqnarray*}
     \text{Dist}(x_k,X^*) \leq c_5 \text{Dist}(x_{k-1},X^*).
    \end{eqnarray*}
    The proof is completed.
    \hfill$\Box$
    \end{proof}
    
    \Add{Note that $ c,c_1$ and $c_2$ are constants independent of $f^*$ and hence the condition $f^* < \frac{c_2^4}{8 c^{8} c_1^2} $ in Lemma~\ref{Lemma 5.2.3} makes sense. Considering characteristics of these constants, $f^*$ tends to be required to be relatively small so as to satisfy this condition. 
    Moreover, even though $c_5<1$ is required to establish the linear convergence of Algorithm~\ref{alg1}, we are not sure about how reasonable this condition is since $c_5$ depends on constants appearing in Assumption~\ref{Assumption for local zero}. 
    Hereinafter, We assume, however, that $c_5<1$ is satisfied in addition to $f^* < \frac{c_2^4}{8 c^{8} c_1^2} $ for our analysis.}

    \begin{lemma}\label{Lemma 5.2.4}
        Suppose that Assumptions\, \ref{Assumption for local zero}, \ref{Assumption for local non-zero} , and $c_5 < 1$ hold and define $r:=\frac{b}{2}(1+\frac{c^2 \hat{c}_3}{1-c_5})^{-1}$. If $x_0 \in B(x^*,r)$, then
        $x_k \in B(x^*,\frac{b}{2})$ holds for all $k \in \{0,1,2,\dots\} $. \\
    \end{lemma}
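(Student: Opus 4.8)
The plan is to prove the claim by induction on $k$, mirroring the structure of Lemma~\ref{sufficiently small r} for the zero-residual case but replacing the quadratic contraction with the geometric (linear) contraction furnished by Lemma~\ref{Lemma 5.2.3}. The base case $k=0$ is immediate: since $c_5\in(0,1)$ and $\hat{c}_3,c>0$, we have $r=\frac{b}{2}\bigl(1+\frac{c^2\hat{c}_3}{1-c_5}\bigr)^{-1}<\frac{b}{2}$, so $x_0\in B(x^*,r)\subseteq B(x^*,\frac{b}{2})$. For the inductive step I would assume $x_0,x_1,\dots,x_k\in B(x^*,\frac{b}{2})$ and deduce $x_{k+1}\in B(x^*,\frac{b}{2})$. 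Throughout, every iteration is successful because we have fixed $\mu_k=\mumaxnz$, so $x_{l+1}=R_{x_l}(s_l)$ for all $l$.

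Under the inductive hypothesis, consecutive iterates $x_{l-1},x_l$ lie in $B(x^*,\frac{b}{2})$ for $1\le l\le k$, so Lemma~\ref{Lemma 5.2.3} applies repeatedly and yields the geometric decay $\Dist(x_l,X^*)\le c_5^{\,l}\,\Dist(x_0,X^*)$ for $0\le l\le k$. I would then bound $\dist(x_{k+1},x^*)$ by telescoping along the trajectory,
\[
\dist(x_{k+1},x^*)\le \dist(x_0,x^*)+\sum_{l=0}^{k}\dist(x_l,x_{l+1}),
\]
and estimate each displacement term via $\dist(x_l,x_{l+1})\le c\|s_l\|$ (the nonzero-residual analog of Lemma~\ref{lemma14 prep}), then $\|s_l\|\le \hat{c}_3\|R^{-1}_{x_l}(\overline{x_l})\|$ from \eqref{Lemma14 s_k}, and finally $\|R^{-1}_{x_l}(\overline{x_l})\|\le c\,\dist(x_l,\overline{x_l})=c\,\Dist(x_l,X^*)$ from Lemma~\ref{lemma inv retraction}, so that $\dist(x_l,x_{l+1})\le c^2\hat{c}_3\,\Dist(x_l,X^*)$. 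Summing the geometric series $\sum_{l=0}^{k}c_5^{\,l}\le \frac{1}{1-c_5}$ and using $\Dist(x_0,X^*)\le \dist(x_0,x^*)\le r$ then gives
\[
\dist(x_{k+1},x^*)\le \Bigl(1+\frac{c^2\hat{c}_3}{1-c_5}\Bigr)\dist(x_0,x^*)\le \Bigl(1+\frac{c^2\hat{c}_3}{1-c_5}\Bigr)r=\frac{b}{2},
\]
which closes the induction.

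The main obstacle is justifying the per-step displacement bound $\dist(x_l,x_{l+1})\le c\|s_l\|$ in the nonzero-residual setting, since Lemma~\ref{lemma14 prep} was derived using $c_3$ together with the injectivity-radius estimate of Lemma~\ref{lemm inf inj}, whereas here the step is controlled by $\hat{c}_3$ from \eqref{Lemma14 s_k}. I would re-run the injectivity-radius argument of Lemma~\ref{lemm inf inj} verbatim with $\hat{c}_3$ in place of $c_3$ to secure $\inf_{x\in B(x^*,b/2)}\injR(x)\ge \frac{bc\hat{c}_3}{2}$ for $b$ small enough; this guarantees $\|s_l\|_{x_l}\le \injR(x_l)$, hence $s_l=R^{-1}_{x_l}(x_{l+1})$, so that \eqref{dist and inv retraction} delivers the desired inequality. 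The remaining delicate point is the bootstrapping inherent in such arguments: the contraction of Lemma~\ref{Lemma 5.2.3} is needed to keep $x_{k+1}$ inside the ball, yet that lemma itself presupposes that the preceding iterates already lie in the ball. This is exactly why the induction is organized so that the entire history $x_0,\dots,x_k\in B(x^*,\frac{b}{2})$ is in hand before estimating $x_{k+1}$, and why the radius $r$ is deflated by precisely the factor $(1+\frac{c^2\hat{c}_3}{1-c_5})^{-1}$, which absorbs the total accumulated displacement bounded by the geometric series.
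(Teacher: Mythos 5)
Your proof is correct and follows essentially the same route as the paper: induction on $k$, the linear contraction of Lemma~\ref{Lemma 5.2.3} giving $\Dist(x_l,X^*)\le c_5^{\,l}\,\Dist(x_0,X^*)$, the telescoped bound on $\dist(x_{k+1},x^*)$ via $\dist(x_l,x_{l+1})\le c^2\hat{c}_3\,\Dist(x_l,X^*)$, and summation of the geometric series (the paper merely splits off the $l=0$ term by handling $k=1$ separately before summing $\sum_{l=1}^{k}c_5^l\le\frac{c_5}{1-c_5}$, which yields the same total $(1+\frac{c^2\hat{c}_3}{1-c_5})r=\frac{b}{2}$). Your observation that the displacement bound $\dist(x_l,x_{l+1})\le c\|s_l\|$ requires re-running the injectivity-radius argument of Lemma~\ref{lemm inf inj} with $\hat{c}_3$ in place of $c_3$ is well taken: the paper invokes Lemma~\ref{lemma14 prep} directly at this point even though that lemma was established only with the zero-residual constant $c_3$ and the bound $\|s_k\|\le c_3\|R^{-1}_{x_k}(\overline{x_k})\|$, so your treatment is in fact the more careful one.
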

    
    \begin{proof} 
        When $k=0$, $x_{0} \in B(x^*,\frac{b}{2})$ holds clearly since $r \leq \frac{b}{2}$ by the definition.  For $k\ge 1$, we prove $x_{k} \in B(x^*,\frac{b}{2})$ by induction. First, we consider $k=1$.
    Noting
    \begin{alignat}{2}
     & \; \; \; \; \; \text{dist}(x_1,x^*) \notag \\
     &\leq  \text{dist}(x_0,x^*)+\text{dist}(x_0,x_1) \notag \\
     &\leq  r+ c\| s_0 \| \notag 
     &\quad& (\text{by } x_0 \in B(x^*,r) \text{ , Lemma~\ref{lemma14 prep}, and Lemma~\ref{lemma inv retraction}}) \notag \\
     & \leq  r+c^2 \hat{c}_3 \; \text{Dist}(x_0,X^*) \notag
     &\quad& (\text{by \eqref{Lemma14 s_k} and Lemma~\ref{lemma inv retraction}}) \notag \\
     & \leq  r+c^2 \hat{c}_3 \; \text{dist}(x_0,x^*) \leq (1+c^2 \hat{c}_3)r. \label{ x_1 contained}
    \end{alignat}
    and $ (1+c^2 \hat{c}_3) r = \frac{b}{2}(1+c^2 \hat{c}_3)(1+\frac{c^2 \hat{c}_3}{1-c_5})^{-1} \leq \frac{b}{2}$ hold, we find that $x_1 \in B(x^*,\frac{b}{2})$. 
    
    In what follows, supposing $x_l \in B(x^*,\frac{b}{2})$ \; ($l=0,1,\dots,k)$ holds for some $k \geq 1$, we show $x_{k+1} \in B(x^*,\frac{b}{2})$.
  By these assumptions together with Lemma~\ref{Lemma 5.2.3}, we have
    \begin{eqnarray}
     \text{Dist}(x_l,X^*) \leq c_5 \text{Dist}(x_{l-1},X^*) \leq \dots \leq c_5^{l} \; \text{Dist}(x_0,X^*) \leq r c_5^l \label{Dist pow bounded}
    \end{eqnarray}
    for all $1 \leq l \leq k$, which yields
    \begin{alignat}{2}
      & \; \; \; \; \; \text{dist}(x_{k+1},x^*) \notag \\
      & \leq  \dist (x_k,x^*) + \dist (x_k,x_{k+1}) \notag \\
      & \leq  \text{dist}(x_k,x^*) + c \| s_k \|_{x_k} \notag
      &\quad& (\text{by Lemma~\ref{lemma inv retraction}})\\
      & \leq  \text{dist}(x_{k-1},x^*)+c \| s_{k-1} \|_{x_{k-1}} + c \| s_k \|_{x_k}\notag \\
      &\quad\quad\quad\quad\quad\vdots\notag\\
      & \leq  \text{dist}(x_1,x^*)+c \sum_{l=1}^{k} \| s_l \|_{x^l} \notag \\
      & \leq  (1+c^2 \hat{c}_3)r +c^2 \hat{c}_3 \sum_{l=1}^{k} \text{Dist}(x_l,X^*) \notag
      &\quad& (\text{by \eqref{ x_1 contained}, \eqref{Lemma14 s_k}, and Lemma~\ref{lemma inv retraction}})\\
      & \leq  (1+c^2 \hat{c}_3 )r + c^2 \hat{c}_3 r \sum_{l=1}^{k} c_5^l, \notag 
      &\quad&  (\text{by \eqref{Dist pow bounded}})
    \end{alignat} 
    which together with $\sum_{l=1}^{k} c_5^l \leq \sum_{l=1}^{\infty} c_5^l = \frac{c_5}{1-c_5}$ implies
    \begin{eqnarray*}
     \text{dist}(x_{k+1},x^*) \leq \left(1+\frac{c^2 \hat{c}_3}{1-c_5} \right)r = \frac{b}{2},
    \end{eqnarray*}
    thus we conclude $x_{k+1} \in B(x^*,\frac{b}{2})$. Hence, we obtain the desired assertion.      
     
%
%

    \hfill$\Box$
    \end{proof}

    \begin{theorem}
        Suppose that $ f^* < \frac{c_2^4}{8 c^{8} c_1^2}$, $c_5 < 1$ and  
        Assumptions \ref{Assumption for local zero}, \ref{Assumption for local non-zero} hold. Let $r>0$ be the same as in Lemma \ref{Lemma 5.2.4}. Moreover, assume $x_0 \in B(x^*,r)$. 
        Then, $ \{\text{Dist}(x_k,X^*)\}$ converges to $0$ linearly and furthermore, $ \{ x_k \}$ converges to some point $ \hat{x} \in B(x^*,\frac{b}{2})$. 
    \end{theorem}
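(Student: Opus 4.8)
The plan is to mirror the zero-residual argument of Theorem~\ref{local zero theorem}, with the quadratic contraction of Lemma~\ref{ quadratic reduction of distance} replaced by the linear contraction of Lemma~\ref{Lemma 5.2.3}. Accordingly I would split the proof into the two stated assertions: first the linear decay of $\Dist(x_k,X^*)$ to $0$, and then the convergence of the iterate sequence $\{x_k\}$ itself, obtained by showing it is Cauchy and appealing to completeness of $(\mani,\langle\cdot,\cdot\rangle)$.

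For the first assertion I would first record that, since $\mu_k=\mumaxnz$ is held fixed (as prescribed just before Lemma~\ref{Lemma 5.2.3}), every iteration is successful, so $x_{k+1}=R_{x_k}(s_k)$ at each step. Combining $x_0\in B(x^*,r)$ with Lemma~\ref{Lemma 5.2.4} then gives $x_k\in B(x^*,\frac{b}{2})$ for all $k$, so the hypotheses of Lemma~\ref{Lemma 5.2.3} are met at every step. Iterating that lemma yields
\begin{equation*}
\Dist(x_k,X^*)\le c_5\,\Dist(x_{k-1},X^*)\le\cdots\le c_5^{\,k}\,\Dist(x_0,X^*),
\end{equation*}
and since $c_5<1$ by hypothesis this is exactly linear convergence of $\{\Dist(x_k,X^*)\}$ to $0$.

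For the second assertion I would show $\{x_k\}$ is a Cauchy sequence. For $m>n$, the triangle inequality, Lemma~\ref{lemma14 prep} (which gives $\dist(x_l,x_{l+1})\le c\|s_l\|_{x_l}$), the bound \eqref{Lemma14 s_k} of Lemma~\ref{lem15} (giving $\|s_l\|_{x_l}\le\hat c_3\|R^{-1}_{x_l}(\overline{x_l})\|$), and Lemma~\ref{lemma inv retraction} chain together to produce
\begin{equation*}
\dist(x_m,x_n)\le c\sum_{l=n}^{m-1}\|s_l\|_{x_l}\le c^2\hat c_3\sum_{l=n}^{m-1}\Dist(x_l,X^*)\le c^2\hat c_3\,r\sum_{l=n}^{m-1}c_5^{\,l},
\end{equation*}
where the last inequality uses $\Dist(x_l,X^*)\le r\,c_5^{\,l}$ from the first assertion. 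Bounding the geometric tail by $\tfrac{c_5^{\,n}}{1-c_5}$ and letting $n\to\infty$ shows the sequence is Cauchy, hence convergent to some $\hat x$; closedness of $B(x^*,\frac{b}{2})$, together with $x_k\in B(x^*,\frac{b}{2})$ for all $k$, places $\hat x$ in that ball.

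The telescoping and geometric-series estimate is routine and essentially identical to the zero-residual proof, so the only point needing real care — and the main obstacle peculiar to the manifold setting — is the inequality $\dist(x_l,x_{l+1})\le c\|s_l\|_{x_l}$ provided by Lemma~\ref{lemma14 prep}. Unlike the Euclidean case, $s_l$ need not equal $\log_{x_l}(x_{l+1})$, so $\|s_l\|_{x_l}$ cannot be identified with the Riemannian distance and must instead be controlled through the injectivity-radius estimate of Lemma~\ref{lemm inf inj}. I would therefore verify that the relevant step-norm constant is $\hat c_3$ (in place of the zero-residual $c_3$), so that the injectivity-radius bound still certifies $x_{l+1}\in R(\injR(x_l))$ and hence the distance-versus-norm inequality; the remainder of the argument then carries over verbatim.
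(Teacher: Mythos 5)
Your proposal is correct and follows essentially the same route as the paper: the linear decay is obtained by combining Lemma~\ref{Lemma 5.2.4} (to keep the iterates in $B(x^*,\frac{b}{2})$) with iterated application of Lemma~\ref{Lemma 5.2.3}, and the convergence of $\{x_k\}$ is proved via the identical Cauchy-sequence chain $\dist(x_m,x_n)\le c\sum\|s_l\|_{x_l}\le c^2\hat c_3\sum\Dist(x_l,X^*)\le \frac{c^2\hat c_3 r}{1-c_5}c_5^{\,n}$. Your extra care about justifying $\dist(x_l,x_{l+1})\le c\|s_l\|_{x_l}$ via the injectivity-radius bound with $\hat c_3$ in place of $c_3$ is in fact slightly more scrupulous than the paper's own write-up, which cites only Lemma~\ref{lemma inv retraction} at that step.
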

    
    \begin{proof} 
        The former claim follows from the assumptions, Lemma \ref{Lemma 5.2.3}, and Lemma \ref{Lemma 5.2.4}. We next show the latter one. Using the assumption that $(\mathcal{M},\langle \cdot , \cdot \rangle)$ is a complete Riemannian manifold, it suffices to show that $\{x_k \}$ is a Cauchy sequence with respect to the Riemannian distance $\dist(\cdot,\cdot)$. For arbitrary $m > n$, we have
    \begin{alignat}{2}
     & \; \; \; \; \; \text{dist}(x_m,x_n) \notag \\
     & \leq  \sum_{l=n}^{m-1} \text{dist}(x_l,x_{l+1}) \notag \\
     & \leq  c \sum_{l=n}^{m-1} \| s_l \|_{x_l} \notag 
     &\quad& (\text{by Lemma~\ref{lemma inv retraction}}) \\
     & \leq c^2 \hat{c}_3 \; \sum_{l=n}^{m-1} \text{Dist}(x_l,X^*) \notag 
     &\quad& (\text{by \eqref{Lemma14 s_k} and Lemma~\ref{lemma inv retraction}})\\
     & \leq  c^2 \hat{c}_3 r \sum_{l=n}^{m-1} c_5^l, \notag 
     &\quad& (\text{by \eqref{Dist pow bounded}})
    \end{alignat}
    which together with
    $
     \sum_{l=n}^{m-1} c_5^l	 \leq  \sum_{l=n}^{\infty} c_5^l = \frac{c_5^n}{1-c_5}
    $ implies
    \begin{eqnarray*}
     \text{dist}(x_m,x_n) & \leq & \frac{c^2 \hat{c}_3 r}{1-c_5} \; c_5^n .
    \end{eqnarray*}
    Therefore, noting $0<c_5<1$, we ensure that $ \{x_k \} $ is a Cauchy sequence. The proof is complete.
    \hfill$\Box$
    \end{proof}

\section{Numerical experiments} \label{chap:experiment}
    
We apply the proposed RLM to two kinds of problems:  CANDECOMP/PARAFAC (CP) decomposition of tensors and low-rank matrix completion. 
All experiments were conducted on a machine with an Intel Core i5 CPU and 8.0 GB RAM. Regarding implementations, all methods were implemented in Matlab. 

\subsection{CP decomposition of tensors}

Here we apply the RLM method to the CP decomposition of tensors.

\subsubsection{Brief introduction to tensor rank approximation problem (TAP)}

Let $S_1$ denote the set of rank one tensors of format $n_1 \times \dots \times n_d$, $S^r := \overbrace{S_1 \times \dots \times S_1}^{r} $, and
let $\| \cdot \|_{\rm F} $ denote the Frobenius norm. For a given tensor $\mathcal{A} \in \mathbb{R}^{n_1 \times \dots \times n_d}$ and $r>0$, rank $r$ CP decomposition of $\mathcal{A} $ is formulated as the following optimization problem with the map 
$ \Phi(p): (p_1,\dots,p_r)  \longmapsto  \sum_{i=1}^r p_i$, where $p_i \in S_1$, $i=1,\ldots,r$.

\begin{eqnarray*}
 \begin{aligned}
 (\text{TAP}) \; \; & \underset{ p \in S^r} {\text{min}} && f(p) = \frac{1}{2}\| {\Phi(p)- \mathcal{A}\|_{ \rm F}}^2.
 \end{aligned}
\end{eqnarray*}

For solving (\text{TAP}), the article\,\cite{TAP} proposed the trust-region-based Riemannian Gauss-Newton method ``RGN-HR'' with a manipulation named ``hot-restart'' specialized for solving TAP.
We utilize the same retraction and geometry as their work and compare performances of our proposal, RGN-HR, and RGN (i.e., RGN without hot-restart), where we used the Matlab code\footnote{\url{https://arxiv.org/src/1709.00033v2/anc}} provided by \cite{TAP} for RGN-HR. 
Since RGN can frequently encounter ill-conditioned linear equations, as a remedy, we employ the Moore-Penrose pseudo-inverse matrix in solving them.  

\subsubsection{Experimental setting of TAP}

We sampled a tensor $\mathcal{A} \in \mathbb{R}^{13 \times 11 \times 9} $ or $\mathcal{A} \in \mathbb{R}^{50 \times 50 \times 50} $ from 
``Model 2'' in \cite{TAP} and generated an input tensor $\mathcal{B} $ according to $\mathcal{B} = \frac{\mathcal{A}}{\| \mathcal{A} \|_{\rm F}} + 10^{-\text{\Add{$p$}}}\frac{\mathcal{E}}{\| \mathcal{E} \|_{\rm F}}$, where $ \mathcal{E}$ is a tensor with the same size as $\mathcal{A}$, whose each element of the tensor is independently and identically distributed random variable from $\mathcal{N}(0,1) $. The parameter $\text{\Add{$p$}}$ controls the degree of perturbation. \\

In this experiment, $\text{\Add{$p$}}$ is fixed as $\text{\Add{$p$}}=5$ and the decomposition rank is $r=5$.
As the hyperparameters  $ \eta,\mu_{\min},$ and $\beta$ in Algorithm~\ref{alg1}, we set $ \eta = 0.2, \mu_{\min} = 0.1, \beta = 5.0$ in RLM.
As the stopping rule, we make each algorithm terminate when any one of the following conditions 
\Add{is} satisfied:

\begin{description}
\item[(c1)] The iteration number exceeds $ \text{MAX ITER} = 1000$.
\item[(c2)] $f(x_k) \leq 10^{-10}$ holds.
\item[(c3)] $ \| \grad f(x_k) \| \leq 10^{-6}$ holds.
\end{description}

\subsubsection{Comparison by averaged performances}
To compare averaged performances of our RLM, RGN-HR and RGN,\footnote{
Unlike the low-rank matrix completion problems solved later on,   
we do not select solvers from \textit{Manopt} as competitors. 
This is because it does not provide tools such as a retraction for dealing with the manifold $S^r$ as of the time of writing this paper.}
we generated 10 tensors $\mathcal{B}_i $ ($ 1 \leq i \leq 10$) in the above way and we set 50 randomized starting points for each tensor. 
We show the results of our experiments in Table \ref{TAP}, where their each row represents the following:
\begin{itemize}
  \item
    success: the number of runs terminated due to fulfilling the stopping rules (c2) or (c3) among 500 runs. The left and right numbers in the parentheses
    show the number of iterations terminated due to the stopping rule (c2) and 
    the stopping rule (c3), respectively. 
    \item
    fail: the left and right numbers show the number of iterations terminated due to the stopping rule (c1) and due to some numerical error, respectively.
  \item
    $t_{\text{success}}$: the averaged computational time among the successful runs.
\end{itemize}

\begin{table}[tb]
  \centering
  \caption{Comparison of RLM, RGN, and RGN-HR}
      \label{TAP}
    \begin{tabular}{c||c|c|c}
      \multicolumn{4}{l}{$\mathcal{B}_i \in \mathbb{R}^{13 \times 11 \times 9},~ i=1,\cdots, 10$} \\ \hline
        & RLM &  RGN & RGN-HR \\
        \hline
        success & $493 ~(375:118)$ & $475 ~(203:272)$ & $500 ~(500:0)$ \\
        fail & $7:0$ & $25:0$ & $0:0$  \\
        $t_{\text{success}} $ (sec.) & $1.232$ & $1.674$  & $6.519 \times 10^{-1}$ \\
        \hline
        \multicolumn{4}{l}{ } \\
         \multicolumn{4}{l}{$\mathcal{B}_i \in \mathbb{R}^{50 \times 50 \times 50},~ i=1,\cdots, 10$} \\ \hline
        & RLM & RGN & RGN-HR \\
        \hline
        success & $492 ~(335:157)$  & $481 ~(242:239)$ &  $495 ~(485:10)$ \\
        fail & $8:0$   & $14:5$ & $0:5$ \\
       $t_{\text{success}} $ (sec.) & $1.088  \times 10 $  & $3.616 \times 10$ &  $3.150$  \\
        \hline
    \end{tabular}
\end{table}

\Add{As Table~\ref{TAP} shows, RLM outperforms RGN in all items, though it is defeated by RGN-HR, which is specialized for solving TAP without any theoretical guarantees. 
  RGN-HR and RGN contain five instances in which they could not reach MAX ITER without the stopping rules satisfied.
  In all such cases, the progress in the computation stalled at the calculation of the retraction which uses the sequentially-truncated higher order singular value decomposition (ST-HOSVD) \cite{hosvd,st-hosvd}. Thus, we infer they were provoked due to numerically unstable calculations of the ST-HOSVD.
 While RLM employs the same retraction as RGN-HR and RGN, it did not cause such an instance as long as we experimented. 
 These observations may support the stability of RLM in comparison with RGN-HR and RGN.
 }

\begin{figure}[tb]
    \centering
    \includegraphics[width=0.8\textwidth]{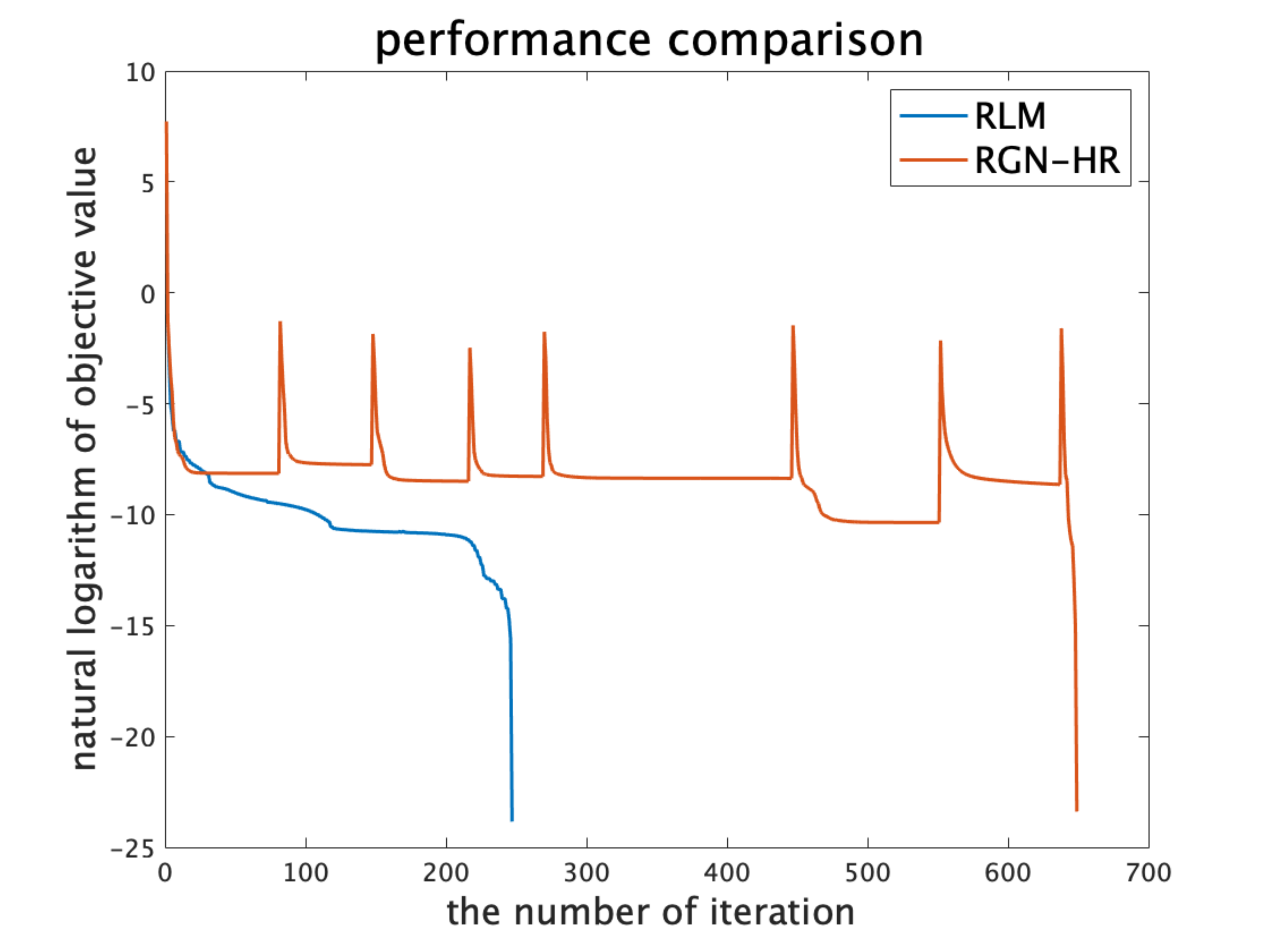}
    \caption{CP decomposition for $ \mathcal{B} \in \mathbb{R}^{13 \times 11 \times 9}$}
    \label{Fig:tensor decomp}
\end{figure}

Figure~\ref{Fig:tensor decomp} shows an example of the change of the objective value of RLM and RGN-HR as the iteration proceeds for $ \mathcal{B} \in \mathbb{R}^{13 \times 11 \times 9}$.
As this figure indicates, $ \{f(x_k)\}$ is monotonically non-increasing as proved in the proof of Theorem~\ref{ global lim inf}. 
Moreover, in this example, the objective value of RLM starts to drastically decrease when it gets relatively small. In most cases, we observed this tendency for RLM.

\subsection{Low-rank matrix completion}

Next, we apply the RLM method to low-rank matrix completion problems. 
    
\subsubsection{Brief introduction to low-rank matrix completion}
This problem is to recover a low-rank matrix from a matrix, say $A \in \mathbb{R}^{m \times n}$, whose elements are known only partially in advance. 
Specifically, letting $\mathbb{R}^{m \times n}_k$ be the set of $m \times n $ matrices with rank $k$, the problem is formulated as follows:
\begin{align}
    \begin{split}
    \underset{X \in \mathbb{R}^{m \times n}_k }{\text{minimize}} \quad & f(x) := \frac{1}{2}\| P_{\Omega}(X) - A \|_{\mathrm{F}}^2,  \label{opt problem for low-rank completion} \\
    \end{split}
\end{align}
where $ \Omega \subset \{1,\dots,m\} \times \{1,\dots,n\}$ denotes the set of indices for which elements of $A $ are known in advance and $ P_{\Omega} \colon \mathbb{R}^{m \times n} \rightarrow \mathbb{R}^{m \times n}$ is given by 
\begin{eqnarray*}
    P_{\Omega}(X) = 
    \begin{cases}
        X_{i,j} & ( (i,j) \in \Omega) \\
        0 & ( \text{otherwise })
    \end{cases}.
\end{eqnarray*}
It is known that $\mathbb{R}^{m \times n}_k $ has a structure as a $ k (m+n-k)$-dimensional smooth manifold embedded into $\mathbb{R}^{m \times n} $ (e.g. \cite{Boumal}). Thus, \eqref{opt problem for low-rank completion} can be regarded as a least-square Riemannian optimization problem.

\subsubsection{Experimental setting} \label{Experimental setting}
We compare the RLM with other four Riemannian methods: Riemannian trust-region (RTR) method with Gauss-Newton approximation for its Hessian approximation, Riemannian gradient descent (RSD) method, Riemannian conjugate gradient(RCG) method provided by {\it Manopt} \cite{boumal2014manopt}, which is a Matlab optimization toolbox on Riemannian manifolds, and adaptive quadratically regularized Newton (ARNT) method proposed by \cite{quadraticallyreguralizedNewton}. We refer to them respectively as ``manoptRTR'', ``manoptRSD'', ``manoptRCG'', and ``ARNT''. 

Given natural numbers $m,n,k$, the oversampling factor $r_s$ (i.e., the ratio of observed elements in $A$) for a low-rank matrix completion is defined as 
\begin{eqnarray*}
    r_s := \frac{|\Omega|}{k(m+n-k)}.
\end{eqnarray*}
Once $r_s$ is given, we set $\Omega$ by repeatedly sampling $(i,j) \in \{1,\dots,m\} \times \{1,\dots,n\} $ such that $(i,j) \notin \Omega $ and adding it to $\Omega$ until $ |\Omega| $ gets equal to $ k(m+n-k)r_s$.

Next, we generate an input matrix $A \in \mathbb{R}^{m \times n}$ in the following manner:
First, we sample $A_L \in \mathbb{R}^{m \times k}$ and $A_R \in \mathbb{R}^{n \times k}$ such that their each element independently and identically follows $\mathcal{N}(0,1) $ and secondly, 
define $A$ as $P_{\Omega}(A_L {A_R}^T)$.

As the parameters  $ \eta,\mu_{\min},$ and $\beta$ in Algorithm~\ref{alg1}, we set $ \eta = 0.2, \mu_{\min} = 0.1, \beta = 5.0$.
As the stopping rule, we make each algorithm terminate when any one of the following conditions is satisfied:    
    \begin{description}
        \item[(c1)] The CPU time exceeds $300$ seconds.
        \item[(c2)] $ \|\grad f(x_k)\| \leq 10^{-8}$ holds.
    \end{description}
In the same manner as in \cite{quadraticallyreguralizedNewton}, we generate an initial point as follows:
With matrices $A_L$ and $A_R$ sampled in the same way as in producing the matrix $A$ above, we first compute $A_L A_R^T $, from which we run manoptRSD to gain a refined point such that the norm of Riemannian gradient gets less than or equal to $10^{-3}$.
The last point is used as an initial solution. This procedure is executed for the sake of observing the performance of our algorithm when the residual of \eqref{P} is sufficiently small.
    
    \subsubsection{Comparison by averaged performances} 
    We compare averaged performances of those methods among 10 starting points generated in the way described in Section \ref{Experimental setting} in the following two types of setting of $m,n,k$, and $r_s$:
    \begin{description}
    \item[(I)] $r_s = 0.9 + 0.01i \; (0\leq i < 10) $, $m=n=30, k=3$
    \item[(II)]$r_s=1.2$, $m = n = 200 + 200i \; (0 \leq i < 5) $, $k=\frac{m}{10}$
    \end{description}
    The setting (I) aims to examine the performances against different $r_s$s with fixed $(m,n,k)$, while (II) for different $(m,n,k)$ with fixed $r_s$.
We evaluate the quality of performances in terms of the following criteria:
\begin{quote}
  \begin{center}
\begin{tabular}{ll}
  {\bm $\text{success}$}: &the number of runs terminated due to fulfilling\\
                          &the stopping rules (c2)\\
  {\bm $\text{iter}_{\text{success}}$}: &the averaged number of iterations among \\
                                     &the successful runs\\
  {\bm $t_{\text{success}}$}:& the averaged computational time among\\
                          &the successful runs.
\end{tabular}
\end{center}
\end{quote}
Figure~\ref{Fig:comparison Low-rank matrix completion - Small but Difficult} shows, from top to bottom, the changes of success, $\text{iter}_{\text{success}}$ (log-scale) and $t_{\text{success}}$ (log-scale) versus the ratio $r_s$ in the setting (I).
    Note that manoptRSD has zero success in all $r_s$ as the top of Figure~\ref{Fig:comparison Low-rank matrix completion - Small but Difficult} shows and thus $\text{iter}_{\text{success}}$ and $t_{\text{success}}$ of manoptRSD cannot be computed. Due to this issue, manoptRSD does not appear in the second and third plots of Figure~\ref{Fig:comparison Low-rank matrix completion - Small but Difficult}.

    Figure\,\ref{Fig:comparison Low-rank matrix completion - Small but Difficult} indicates that RLM has superiority over manoptRTR, manoptRSD, and manoptRCG in terms of both computational time and iteration number. Moreover, RLM is more robust against the change of $r_s$ compared with those methods. 
    In the comparison of RLM and ARNT, RLM still shows superiority over ARNT.
    
    \begin{figure}[tb]
        \centering
        \includegraphics[width=1.2\textwidth]{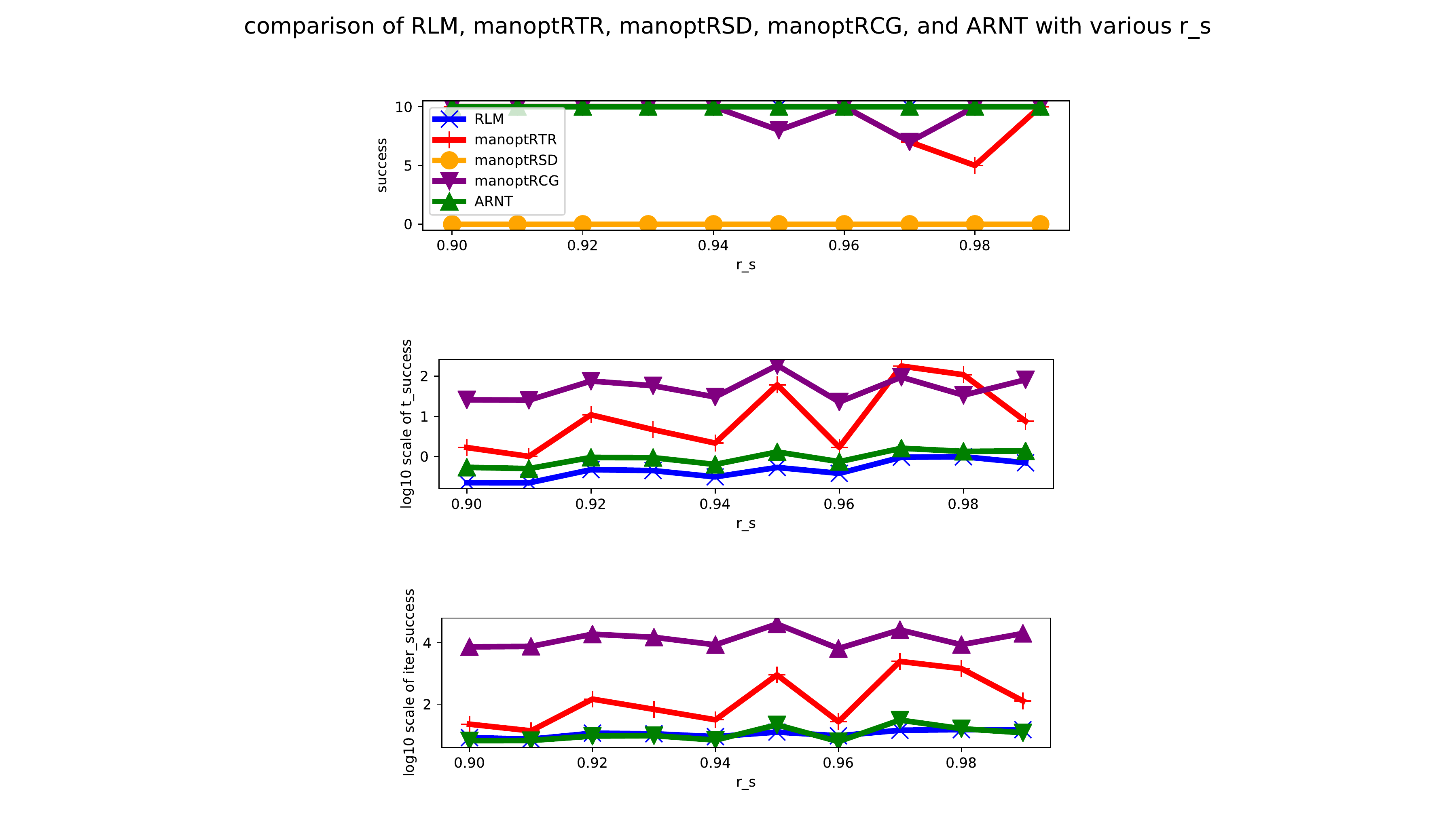}
         \caption{Comparison of RLM to existing methods in setting (I).
         In the middle and bottom figures, manoptRSD and manoptRCG do not appear because of failures for all the instances.
          }
        \label{Fig:comparison Low-rank matrix completion - Small but Difficult}
    \end{figure}

    Figure~\ref{Fig:comparison Low-rank matrix completion - Large} shows the averaged performances in the setting (II). Since RLM and ARNT show very similar performances, some plots of their results overlap in the figure. 
    According to Figure~\ref{Fig:comparison Low-rank matrix completion - Large}, from the perspective of both computational time and iteration number, RLM is as efficient as manoptRTR and ARNT, and is superior to the other methods. 
    \begin{figure}[tb]
        \centering
        \includegraphics[width=1.3\textwidth]{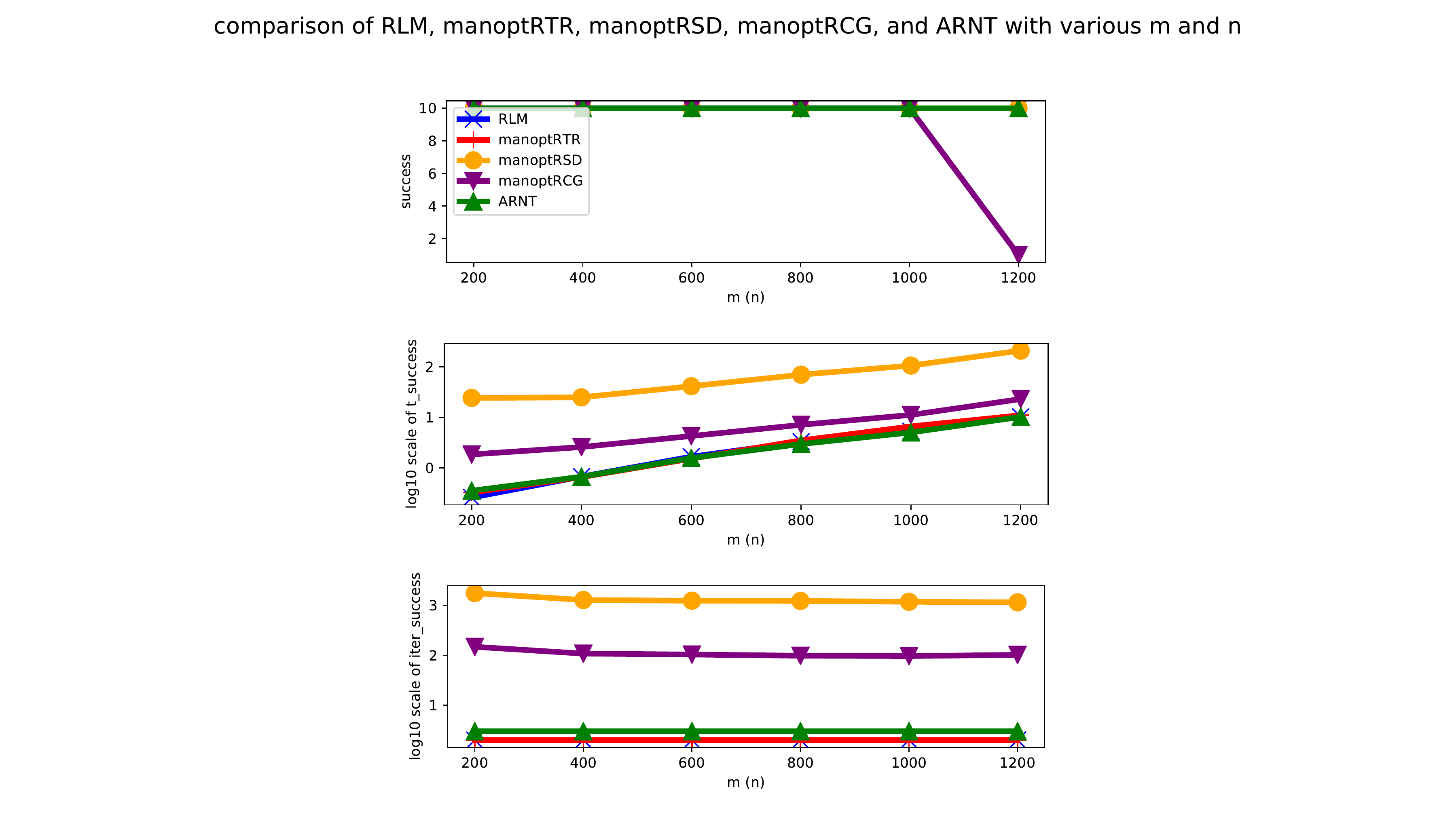}
        \caption{Comparison of RLM to existing methods in setting (II) (the same legend with Figure~\ref{Fig:comparison Low-rank matrix completion - Small but Difficult}) 
        the results of RLM are covered with those for manoptRTR and ARNT in the top and middle figures and those for manoptRTR in the bottom one.
        } 
        \label{Fig:comparison Low-rank matrix completion - Large}
    \end{figure}

    \begin{figure}[tb]
        \centering
        \includegraphics[width=1.0\textwidth]{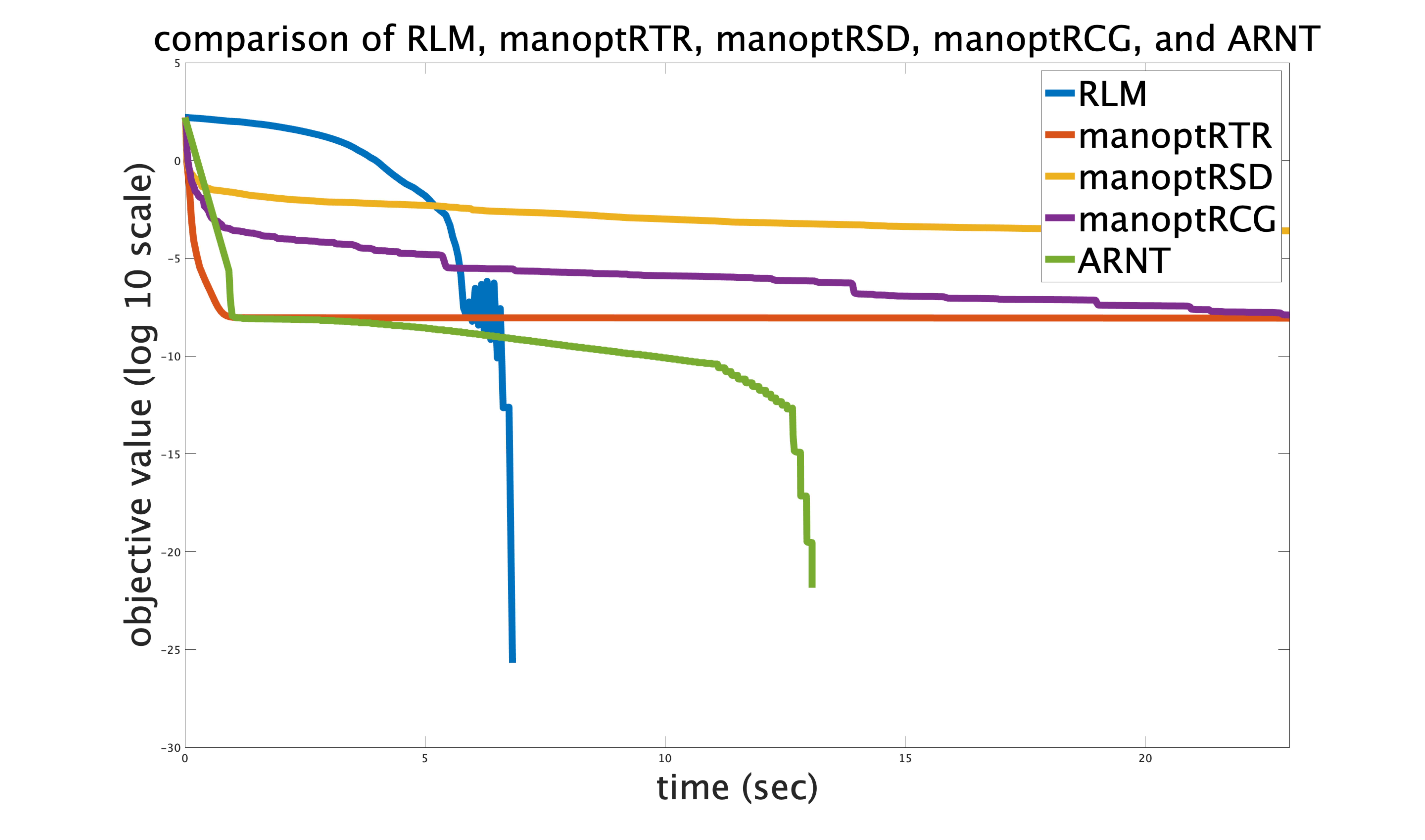}
        \caption{The objective value versus computation time for $m = n = 30 $, $k=3$, $r_s = 0.97 $.}
        \label{Fig:Low-rank matrix completion}
    \end{figure}  
    Figure~\ref{Fig:Low-rank matrix completion} illustrates how each method decreases the objective value against CPU time in an instance of problem with $ m = n = 30, r_s = 0.97$. While manoptRTR, manoptRSD, and manoptRCG get stuck at some point, RLM accomplishes a considerable reduction.
\section{Conclusion} \label{chap:conclusion}
    
    We proposed a Riemannian Levenberg-Marquardt (RLM) method for the nonlinear least-squares problem on the Riemannian manifold of the form \eqref{P}. We proved the global and local convergence properties of the algorithm and 
    conducted two types of numerical experiments: the CP decomposition of tensors and the low-rank matrix completion.
    In both of them, we found the RLM efficiently converges when the residual of \eqref{P} is sufficiently small.

    Possible directions of future work would be to 
    extend the theoretical guarantees to \eqref{P} where $\mani $ is a manifold with boundary.
    Furthermore, we are interested in the establishment of a theory pertaining to the desirable affine transformation for RLM and its relation with the Riemannian metric.

\section*{Compliance with Ethical Standards}
This work was partially supported by the Japan Society for the Promotion of Science KAKENHI Grant Number 19H04069, 20K19748, 20H04145, and 23H03351. There is no conflict of interest in writing the paper.

\bibliographystyle{spmpsci}      

\end{document}